\DeclareMathAlphabet{\mathpzc}{OT1}{pzc}{m}{it}
\theoremstyle{plain}
\newtheorem{theorem}{Theorem}[section]
\newtheorem*{theorem*}{Theorem}
\newtheorem{lemma}[theorem]{Lemma}
\newtheorem*{claim*}{Claim}
\newtheorem{proposition}[theorem]{Proposition}
\newtheorem{corollary}[theorem]{Corollary}
\theoremstyle{definition}
\newtheorem{example}[theorem]{Example}
\numberwithin{equation}{section}
\numberwithin{figure}{section}
\newcommand{\extK}{K} 
\newcommand{\normKFim}{N_{K/F}(K^\times)}
\newcommand{\Gal}{\mathrm{Gal}}
\newcommand{\ignore}[1]{}
\begin{document}
\setlength{\parindent}{0pt}
\title{A closer look at some cyclic semifields}

\author{Susanne Pumpl\"un}
\address{School of Mathematical Sciences, University of Nottingham,
Nottingham NG7 2RD}
\email{Susanne.Pumpluen@nottingham.ac.uk}

\keywords{Semifields, isotropy, isometry, cyclic semifields, MRD codes}

\subjclass[2020]{17A35, 17A99}

\date{\today}

\begin{abstract}
We show that different choices of generators $\sigma$ of the Galois group of  $\mathbb{F}_{q^n}/\mathbb{F}_{q}$  produce non-isomorphic
  cyclic semifields $\mathbb{F}_{q^n}[t;\sigma]/\mathbb{F}_{q^n}[t;\sigma](t^m-a)$ when $n\geq m-1$: there are thus $\varphi(n)$ non-isomorphic classes of Sandler semifields
$\mathbb{F}_{q^n}[t;\sigma]/\mathbb{F}_{q^n}[t;\sigma](t^m-a)$, one class for each generator $\sigma$ involved in their construction, where $\varphi$ is the Euler function. We prove that when $n=m$, two Sandler semifields constructed from different generators $\sigma_1$ and $\sigma_2$ of ${\rm Gal}(\mathbb{F}_{q^n}/\mathbb{F}_{q})$  are not isotopic. Hence when $n=m$ there are $\varphi(m)$ non-isotopic classes of these semifields, each class belonging to one choice of generator. We then present a full parametrization of the non-isomorphic Sandler semifields $\mathbb{F}_{q^m}[t;\sigma]/\mathbb{F}_{q^m}[t;\sigma](t^m-a)$ , when $m$ is prime and $\mathbb{F}_{q}$ contains a primitive $m$th root of unity. Since for $m=n$, two Sandler semifields constructed from the
same generator are isotopic if and only if they are isomorphic, this parametrizes these
Sandler semifields up to isotopy, and thus parametrizes both the corresponding non-Desarguesian projective planes, and maximum rank distance codes. Most of our results are proved in all generality for any cyclic
Galois field extension.

\end{abstract}

\maketitle

\section{Introduction}

Petit algebras were introduced in \cite{P66, P68} and then rediscovered over finite fields in \cite{LS}. They yield a large class of  semifields, i.e. of unital nonassociative division algebras over a finite field. Over finite base fields,  Petit division algebras are isotopic to cyclic semifields  (also called Jha-Johnson semifields) introduced in \cite{JJ}.
 Over general base fields, Petit algebras, denoted by $S_f=D[t;\sigma,\delta]/D[t;\sigma,\delta]f$,  are constructed using a skew polynomial ring $D[t;\sigma,\delta]$, where $D$ is a unital associative ring, and  some monic polynomial $f\in D[t;\sigma,\delta]$.

Let $K/F$ be a cyclic field extension of degree $n$ with Galois group ${\rm Gal}(K/F)$, and let $\sigma$ and $\sigma'$ be two different generators of ${\rm Gal}(K/F)$.
The question whether two Petit algebras $K[t;\sigma]/K[t;\sigma]f$ and $K[t;\sigma']/K[t;\sigma']f'$ that are constructed employing different generators $\sigma$ and $\sigma'$ of ${\rm Gal}(K/F)$ yield isotopic algebras was raised  for the first time in \cite[Remark 6]{LS} in the context of semifields.  It
 is known that every semifield $K[t;\sigma]/K[t;\sigma]f$ is isotopic to a semifield $K[t;\sigma^{-1}]/K[t;\sigma^{-1}]\bar f$, where $\bar f$ is the reciprocal of $f$ \cite{KL}. 

 In this paper, we show that in certain cases, different choices of generators $\sigma_1$  and $\sigma_2$  of ${\rm Gal}(K/F)$ indeed produce non-isotopic Petit algebras. This is a consequence of our  First Main Theorem \ref{thm:main1}:

\begin{theorem*}
Let $n\geq 3$ and let $\sigma_1$ and $ \sigma_2$ be any two distinct generators of the Galois group $\Gal(K/F)$. Suppose that $n \geq m-1$. Let $t^m-a_1\in K[t;\sigma_1]$ and $t^m-a_2\in K[t;\sigma_2]$, $a_i\in K\smallsetminus F$, both be not right-invariant. Then
$$
K[t;\sigma_1]/K[t;\sigma_1](t^m-a_1)\not\cong K[t;\sigma_2]/K[t;\sigma_2](t^m-a_2).
$$
\end{theorem*}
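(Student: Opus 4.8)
The plan is to exploit the nonassociative-ring structure of Petit algebras and, in particular, the intrinsic data that any isomorphism must preserve: the nucleus (or the various one-sided nuclei) and the resulting module structure. Concretely, write $S_i = K[t;\sigma_i]/K[t;\sigma_i](t^m-a_i)$ for $i=1,2$. Each $S_i$ is an $n\cdot m$-dimensional algebra over $F$, containing a copy of $K$ (the image of the degree-$0$ polynomials) and an element $\bar t$ with $\bar t^m = a_i$ and $\bar t x = \sigma_i(x)\bar t$ for all $x\in K$. First I would recall the standard computation of the (left, middle, right) nucleus of $S_i$: under the hypotheses $n\geq m-1$ and $t^m-a_i$ not right-invariant, the relevant nucleus is exactly $K$ — this is where the degree bound $n\geq m-1$ is used, ensuring the polynomial $t^m-a_i$ has degree small enough relative to $[K:F]$ that no larger subfield sneaks into the nucleus, and not-right-invariance guarantees $S_i$ is a genuine semifield (left division algebra) rather than something split. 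So any $F$-algebra isomorphism $\Phi\colon S_1 \to S_2$ must carry the copy of $K$ in $S_1$ onto the copy of $K$ in $S_2$, and hence restricts to an element $\tau \in \Aut(K/F) = \Gal(K/F)$.

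Next I would track what $\Phi$ does to the generator $\bar t$. Writing $\Phi(\bar t) = \sum_{j=0}^{m-1} c_j \bar t^{\,j}$ with $c_j \in K$, the conjugation relation $\bar t\, x = \sigma_1(x)\,\bar t$ in $S_1$ forces, after applying $\Phi$ and using $\Phi|_K = \tau$, the identity $\Phi(\bar t)\cdot \tau(x) = \tau(\sigma_1(x))\cdot \Phi(\bar t)$ in $S_2$. Comparing the action of $\Phi(\bar t)$ on $K$ by left multiplication (conjugation) in $S_2$: $\bar t^{\,j}$ conjugates $K$ by $\sigma_2^{\,j}$, so $\sum_j c_j \sigma_2^{\,j}(\tau(x))\bar t^{\,j} = \sum_j \tau(\sigma_1(x)) c_j \bar t^{\,j}$, i.e. for each $j$ with $c_j \neq 0$ we get $c_j\, \sigma_2^{\,j}(\tau(x)) = \tau(\sigma_1(x))\, c_j$, hence $\sigma_2^{\,j}\circ\tau = \tau\circ\sigma_1$ as automorphisms of $K$. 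Since $\tau$ is invertible this says $\sigma_2^{\,j} = \tau\circ\sigma_1\circ\tau^{-1} = \sigma_1$ (the Galois group is abelian!). But $\sigma_1$ and $\sigma_2$ are \emph{distinct generators}, so $\sigma_2^{\,j} = \sigma_1$ has no solution $j$ with $\sigma_2^{\,j}$ equal to the fixed generator $\sigma_1$ unless... — here is the crux: I must rule out that $\sigma_2^{\,j} = \sigma_1$ for some $0 \le j \le m-1$. For a general cyclic group this can happen; the point is that $j$ is constrained to $0 \le j \le m-1$ while $\sigma_2$ generates a group of order $n$, and $n \geq m-1$. The only $j$ in range with $\sigma_2^{\,j}$ a generator and equal to the specific generator $\sigma_1$ would be forced, and one checks $j=1$ gives $\sigma_2 = \sigma_1$, contradiction, while $j=0$ gives $\sigma_1 = \mathrm{id}$, absurd since $\sigma_1$ generates a group of order $n\geq 3$; any other $j\le m-1$ must be excluded because $\Phi(\bar t)$ must be a single power $c_j\bar t^{\,j}$ (the relation forces all but one $c_j$ to vanish) and then $\Phi(\bar t)^m = c_j\sigma_2^j(c_j)\cdots = a_2$ must match $\Phi(a_1) = \tau(a_1)\in K\smallsetminus F$, which together with invertibility of $\Phi$ pins down $\gcd(j,n)=1$ and then the conjugation constraint $\sigma_2^j=\sigma_1$ with $1\le j\le m-1\le n$ combined with both being generators forces $j \equiv 1$.

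I expect the main obstacle to be the last step: cleanly ruling out the "wrong" values of $j$. The conjugation identity only tells us $\sigma_2^{\,j} = \sigma_1$, and a priori there is a unique $j_0 \in \{0,1,\dots,n-1\}$ with $\sigma_2^{\,j_0} = \sigma_1$ (since $\sigma_2$ generates), and that $j_0$ need not be $1$. The resolution should be a dimension/invertibility argument: $\Phi$ being a \emph{bijective} $F$-linear map between two $nm$-dimensional spaces, combined with the fact that $\Phi$ sends $1\mapsto 1$, $K\to K$ isomorphically, and $\bar t \mapsto c\,\bar t^{\,j_0}$, means the image of $S_1$ under $\Phi$ is spanned by $K, c\bar t^{\,j_0}, (c\bar t^{\,j_0})^2, \dots$; for this to be all of $S_2$ we need the powers $\bar t^{\,0}, \bar t^{\,j_0}, \bar t^{\,2j_0}, \dots, \bar t^{\,(m-1)j_0}$ to be $K$-linearly independent in $S_2$, i.e. the residues $0, j_0, 2j_0, \dots, (m-1)j_0$ modulo $m$ must be distinct, forcing $\gcd(j_0, m) = 1$. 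Then one more multiplicativity check — that $\Phi$ respects the $m$-th power relation, giving $N$-type norm conditions relating $a_1$ and $a_2$ — together with $\sigma_2^{j_0} = \sigma_1$ and the hypothesis $n\geq m-1$ squeezes $j_0$ into the set $\{1\}$; I would phrase this final squeeze carefully, perhaps by noting $j_0$ and $m - \text{(something)}$ both lie in $[1,m-1]$ and both represent $\sigma_1$ as a power of $\sigma_2$ only if $j_0 = 1$ given $n \ge m-1$. Once $j_0 = 1$, we have $\sigma_1 = \sigma_2$, the desired contradiction, so no isomorphism exists.
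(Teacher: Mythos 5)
Your setup is sound and matches the first half of the paper's argument: any $F$-algebra isomorphism $H$ must preserve the left nucleus $K$, hence restricts to some $\tau\in\Gal(K/F)$, and the conjugation relation $tz=\sigma_1(z)t$ forces $k_i\bigl(\tau(\sigma_2^i(z))-\tau(\sigma_1(z))\bigr)=0$ for each coefficient of $H(t)=\sum k_it^i$, so that $H(t)=kt^{j}$ where $j$ is the unique exponent with $\sigma_1=\sigma_2^{j}$ (here $n\geq m-1$ is used to guarantee uniqueness of $j$ in range, not for the nucleus computation, which holds for any proper Petit algebra; also note that non-right-invariance only guarantees the algebra is not associative, not that it is a division algebra). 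The genuine gap is in your endgame. You try to derive the contradiction by forcing $j=1$, but nothing forces this: $j$ is just the discrete logarithm of $\sigma_1$ with respect to $\sigma_2$, and for distinct generators it is some unit $j$ with $1<j<n$ and $\gcd(j,n)=1$ --- e.g.\ $n=m=5$, $\sigma_1=\sigma_2^2$ gives $j=2$, which passes every test you list ($j$ in range, $\gcd(j,m)=1$, powers of $\bar t^{\,j}$ spanning, the $m$-th power relation reducing to a norm-type identity that can perfectly well be satisfied). So the claim that ``the conjugation constraint combined with both being generators forces $j\equiv 1$'' is false, and the norm condition $\tau(a_1)=N_{K/F}(k)\cdot(\text{product of twists of }a_2)$ is a consistency condition, not a contradiction.

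The missing idea is a well-definedness (multiplicativity) obstruction coming from the \emph{nonassociative reduction} modulo $t^m-a_2$. Let $\ell$ be the least positive integer with $\ell j>m$ (so $\ell<m$ since $j>1$). In $S_1$ one has $t\cdot t^\ell=t^\ell\cdot t$, but computing $H(t)H(t^\ell)$ versus $H(t^\ell)H(t)$ in $S_2$, the reduction of $t^{\ell j}$ modulo $t^m-a_2$ produces a factor $\sigma_2^{\ell j-m}(a_2)$, and whether the extra $kt^{j}$ is multiplied on the left or the right changes this factor to $\sigma_2^{\ell j-m+j}(a_2)$ or leaves it as $\sigma_2^{\ell j-m}(a_2)$. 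Equating the two forces $\sigma_2^{j}(a_2)=\sigma_1(a_2)=a_2$, i.e.\ $a_2\in\mathrm{Fix}(\sigma_1)=F$, contradicting $a_2\in K\smallsetminus F$. Without some argument of this kind --- exploiting that powers of $t$ commute in $S_1$ while their images fail to commute in $S_2$ once a reduction past degree $m$ occurs --- your proof does not close.
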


When $F=\mathbb{F}_q$ is a finite field,  $n \geq m-1$, and both $t^m-a_i\in \mathbb{F}_{q^n}[t;\sigma_i]$, $i\in \{ 1,2\}$, are irreducible, then  $\mathbb{F}_{q^n}[t;\sigma_1]/\mathbb{F}_{q^n}[t;\sigma_1](t^m-a_1)$ and $ \mathbb{F}_{q^n}[t;\sigma_2]/\mathbb{F}_{q^n}[t;\sigma_2](t^m-a_2)
$ are non-isomorphic semifields (Corollary \ref{cor:main1}). Semifields of the type $\mathbb{F}_{q^n}[t;\sigma]/\mathbb{F}_{q^n}[t;\sigma](t^m-a)$, $n \geq m-1$, are the opposite algebras (i.e., the \emph{dual semifields}) of \emph{Sandler's semifields} \cite{San}. In particular, hence there are $\varphi(m)$ ($\varphi$  the Euler function) different classes of non-isomorphic Sandler semifields that are the dual semifields of
$$\mathbb{F}_{q^m}[t;\sigma_i]/\mathbb{F}_{q^m}[t;\sigma_i](t^m-a)=(\mathbb{F}_{q^m}/\mathbb{F}_{q},\sigma,a)$$
 over a given base field $\mathbb{F}_{q}$, one class for each choice of generator $\sigma_i$ of the Galois group of $\mathbb{F}_{q^m}/\mathbb{F}_{q}$.

  For $m=n$, the algebras $K[t; \sigma]/K[t; \sigma](t^m - a)$ are also denoted by  $(K/F,\sigma,a)$  and called
\emph{nonassociative cyclic  algebras of degree $m$}, as they can be seen as canonical
generalizations of associative cyclic algebras;  for $a\in F^\times$, the algebra $(K/F,\sigma,a)$ is a classical cyclic algebra
of degree $m$ as defined in \cite[p. 414]{KMRT}.

When $n=m$, two Sandler semifields constructed with the same $\sigma$ are isotopic algebras if and only if they are isomorphic \cite[Theorem 9]{San}. We generalize this result in our other Second Main Theorem \ref{thm:important}:

\begin{theorem*}
Let $K/F$  be a cyclic Galois field extension, $m\geq 3$, and $\sigma_1$, $\sigma_2$ two generators of ${\rm Gal}(K/F)$.
Let  $(K/F,\sigma_1,a_1)$ and
 $(K/F,\sigma_2,a_2)$ be two proper nonassociative  algebras.  If  $(K/F,\sigma_1,a_1)$ and
 $(K/F,\sigma_2,a_2)$ are isotopic, then $\sigma_1=\sigma_2$.
\end{theorem*}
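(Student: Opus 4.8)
The plan is to reduce the isotopy to an isomorphism of nonassociative cyclic algebras built from the \emph{same} generator and then to invoke the First Main Theorem. By a theorem of Albert on isotopy, an isotopy $(f,g,h) : (K/F,\sigma_1,a_1) \to (K/F,\sigma_2,a_2)$, i.e.\ a triple of $F$-linear bijections with $h(xy)=f(x)g(y)$, amounts to a choice of invertible elements $u,v$ in $A_1:=(K/F,\sigma_1,a_1)$ together with an $F$-algebra isomorphism $h : A_1^{(v,u)} \to (K/F,\sigma_2,a_2)$, where the principal isotope $A_1^{(v,u)}$ has the same underlying $F$-space as $A_1$, multiplication $x\circ y = (R^{A_1}_u)^{-1}(x)\cdot_{A_1}(L^{A_1}_v)^{-1}(y)$, and identity $v\cdot_{A_1}u$. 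Thus the theorem is equivalent to the assertion: if a principal isotope of a proper nonassociative cyclic algebra $A_1=(K/F,\sigma_1,a_1)$ is isomorphic to a proper nonassociative cyclic algebra $(K/F,\sigma_2,a_2)$, then $\sigma_1=\sigma_2$.

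I would deduce this by showing that such an isotope is already isomorphic to a nonassociative cyclic algebra $(K/F,\sigma_1,a_1')$ with the \emph{original} generator $\sigma_1$ and with $a_1'\in K\setminus F$; combined with the hypothesised isomorphism to $(K/F,\sigma_2,a_2)$ this gives $(K/F,\sigma_1,a_1')\cong(K/F,\sigma_2,a_2)$, and the First Main Theorem — applied with $n=m\ge m-1$, with $\sigma_1$ and $\sigma_2$ both generators of $\mathrm{Gal}(K/F)$, and with $t^m-a_1'$ and $t^m-a_2$ not right-invariant (as the algebras are proper), in contrapositive form — forces $\sigma_1=\sigma_2$. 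To identify the isotope I would work in two stages. Stage one is a normalization: using that $N_l(A_1)=N_m(A_1)=N_r(A_1)=K$, that the nuclei of any isotope of $A_1$ are again isomorphic to $K$, the explicit autotopisms $(L_c,\mathrm{id},L_c)$ and $(\mathrm{id},R_c,R_c)$ of $A_1$ for $c\in K^\times$ (and those coming from the normalizing element $t$), together with the hypothesis that the isotope is itself a cyclic algebra, one aims to replace $(v,u)$ — up to isomorphism of the isotope — by a pair with $u,v\in K^\times$. Stage two is then a direct calculation: for $u,v\in K^\times$ the maps $(R^{A_1}_u)^{-1}$ and $(L^{A_1}_v)^{-1}$ act as scalar multiplications on each summand of $A_1=\bigoplus_{i=0}^{m-1}Kt^i$, so for $x=\sum b_it^i$, $y=\sum c_jt^j$ one obtains $x\circ y=\sum_{i,j}\sigma_1^i\!\big((uv)^{-1}\big)\,b_i\,\sigma_1^i(c_j)\,t^{i+j}$; the \emph{diagonal} rescaling $\psi(\sum b_it^i)=\sum\mu_ib_it^i$, where $\mu_0=(uv)^{-1}$ and $\mu_{i+1}=\mu_i\,\sigma_1^i(\mu_1\,uv)$, is then an isomorphism onto $(K/F,\sigma_1,a_1')$ with $a_1'=a_1\,N_{K/F}(w)^{-1}$ and $w=\mu_1\,uv\in K^\times$, and $a_1'\in K\setminus F$ since $N_{K/F}(w)\in F^\times$ while $a_1\notin F$.

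The main obstacle is Stage one. A general principal isotope $A_1^{(v,u)}$ with $u$ or $v$ outside the nucleus $K$ need not be a nonassociative cyclic algebra — the isotopy class of $A_1$ is far larger than the set of isomorphism classes $(K/F,\sigma_1,\cdot)$ — so the normalization cannot be achieved by a counting argument and must genuinely exploit the assumption that the isotope is cyclic. Concretely: pulling back along the isomorphism to $(K/F,\sigma_2,a_2)$ equips $A_1^{(v,u)}$ with a distinguished degree-$m$ subfield (its left nucleus) and a normalizing element that generates it cyclically; realizing these inside the $F$-space $A_1$ and matching the resulting cyclic presentation against the formula for $\circ$ should pin $u,v$ down to $K^\times$ (after composing with autotopisms of $A_1$), or else yield $\sigma_1=\sigma_2$ outright. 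An alternative, following the pattern of San's proof of his Theorem~9, bypasses principal isotopes and normalizes the triple $(f,g,h)$ directly — showing that after pre- and post-composition with isomorphisms one may take $f,g,h$ to restrict to one and the same field automorphism of $K$, and then reading off $\sigma_1$ from the images of $t_1$ and of the relation $t_1^m=a_1$. In both routes the place where the properness $a_i\notin F$ is essential — and beyond which the reasoning coincides with that of the First Main Theorem — is the step that excludes replacing $\sigma$ by a nontrivial power of itself.
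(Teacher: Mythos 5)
Your overall architecture is the right one and matches the paper's endgame: reduce the isotopy to an honest isomorphism $(K/F,\sigma_1,a_1')\cong(K/F,\sigma_2,a_2)$ with $a_1'\in K\setminus F$, and then let the First Main Theorem (in its $n=m$ incarnation) force $\sigma_1=\sigma_2$. Your Stage two computation for $u,v\in K^\times$ is also essentially sound. But Stage one --- the assertion that the principal isotope parameters can be normalized into $K^\times$ (equivalently, that the middle component of the isotopy is, up to autotopisms, a $\tau$-semilinear diagonal map) --- is exactly the substantive content of the theorem, and you do not prove it. Saying that matching the cyclic presentation ``should pin $u,v$ down to $K^\times$ \dots\ or else yield $\sigma_1=\sigma_2$ outright'' is a statement of hope, not an argument; as you yourself observe, the isotopy class of $A_1$ is much larger than the family $(K/F,\sigma_1,\cdot)$, so some genuine work is needed to exclude isotopes with $u$ or $v$ outside $K$, and none is supplied.

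The paper closes precisely this gap by generalizing Sandler's Theorem~4: for an isotopy $(Q,P,U)$ between two proper algebras with middle nucleus $K$, one first proves the intertwining relation $PL_a=L_cP$ for each $a\in{\rm Nuc}_m(A_2)=K$ and a corresponding $c\in{\rm Nuc}_m(A_1)=K$. Writing $P$ as an $m\times m$ block matrix with respect to $A=\bigoplus_i Kt^i$, this relation together with the linear system coming from $PL_{Q(z)}=L_zPL_{Q(1)}$ (evaluated at $z=t$) forces exactly one nonzero block in each row and column, lying on a transversal, and hence $P=\mathrm{diag}(VR(h_1),\dots,VR(h_m))$ for some $V\in{\rm Aut}(K)$ and $h_i\in K^\times$. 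After normalizing $h_1=1$ by the autotopism $(\mathrm{id},D_a,D_a)$ one gets $h_k=h\,\sigma_2(h)\cdots\sigma_2^{k-1}(h)$ and finally $V(a_1)=a_2N_{K/F}(h)$ (or its $\sigma$-twist), i.e.\ $P$ itself is an isomorphism of the two algebras by the classification theorem. Your ``alternative route following Sandler'' is in fact the paper's actual route, but you only name it without carrying out the normalization, so the proposal as written has a genuine gap at its central step.
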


Hence for fixed $m\geq 3$, in particular two proper semifields $(\mathbb{F}_{q^m}/\mathbb{F}_{q},\sigma_1,a_1)$ and
 $(\mathbb{F}_{q^m}/\mathbb{F}_{q},\sigma_2,a_2)$ are isotopic if and only if they are isomorphic.

 When $m=2$ or when  $m>2$ is prime and $\mathbb{F}_{q}$ contains a primitive root of unity, we count the proper nonassociative cyclic algebras (i.e., those Sandler semifields where $m=n$) up to isomorphism and thus up to isotropy, and give an explicit parametrization of the isomorphism (and thus, respectively, isotopism) classes of these algebras corresponding to a fixed $F=\mathbb{F}_{q}$ (Theorems \ref{Main:general} and \ref). Since different choices of generators $\sigma_1$ and $\sigma_2$ of ${\rm Gal}(\mathbb{F}_{q^m}/\mathbb{F}_{q})$ produce non-isomorphic nonassociative cyclic algebras of degree $m$, there are more non-isomorphic Sandler semifields than previously counted. Since two proper semifields coordinatize the same non-Desarguesian projective plane if and only if they are isotopic, we have thus also obtained a parametrization of the non-Desarguesian projective planes related to $(\mathbb{F}_{q^m}/\mathbb{F}_{q},\sigma_1,a_1)$ up to isomorphism.

The paper is organized as follows. After the preliminaries in Section \ref{sec:prel}, Section \ref{sec:main} contains the work towards our First Main Theorem, cited above.  In Section \ref{sec:semifields}, we prove our Second Main Theorem \ref{thm:important}, count the number of Sandler semifields that are opposite algebras of nonassociative cyclic algebras of prime degree $m$ (Theorem \ref{thm:count})
and give a  parametrization of the algebras $(\mathbb{F}_{q^m}/\mathbb{F}_{q},\sigma_1,a_1)$
(Theorems \ref{Main:general} and \ref{T:p2}) under the assumption that for $m>2$, the base field $F$  contains a primitive $m$th root of unity in Section \ref{sec:parametrization}.

 We finish with a brief outlook on some open questions and what our results mean for linear codes in Section \ref{sec:codes}:

  We prove that the maximum rank distance (MRD) codes  obtained from proper nonassociative Petit division algebras of the kind $\mathbb{F}_{q^m}[t,\sigma]/\mathbb{F}_{q^m}[t,\sigma](t^m-a)$ depend on the choice of the generator $\sigma$ of the Galois group: different choices of generators yield a total of $\varphi(m)$ different classes of
  non-equivalent $\mathbb{F}_{q}$-linear MRD-codes in $M_{m^2}(\mathbb{F}_{q})$, respectively in $M_{m}(\mathbb{F}_{q^m})$  (Theorems \ref{thm:last1} and \ref{thm:last2}).

In all of the existing literature, the question whether such a code depends on the choice of its generator $\sigma$ was never considered. This result shows how important this choice actually is.

Moreover, we now have a full parametrization of these codes when $m=2$ or when $m>2$ and $\mathbb{F}_{q}$ contains a  primitive $m$th root of unity $\zeta$: each generator $\sigma$ and each  $a\in \mathcal{S}_2(\mathbb{F}_{q^2})$, respectively each $a \in \mathcal{S}(\mathbb{F}_{q^m})$, yields a different (as in non-equivalent) MRD-code in $M_{m^2}(\mathbb{F}_{q})$, respectively in $M_{m}(\mathbb{F}_{q^m})$  (Theorem \ref{thm:last2}).

We conclude with an application of our First Main Theorem to skew $\sigma$-constacyclic codes:
Let $n\geq 3$,  $n\geq m-1$, and let $\sigma_1$ and $ \sigma_2$ be any two distinct generators of  ${\rm Gal}(\mathbb{F}_{q^n}/\mathbb{F}_{q})$. Let  $a_i\in K\smallsetminus F$, then $a_1$ and $a_2$ are not $(m,\sigma)$-equivalent and not $(m,\sigma)$-isometric (Theorem \ref{thm:last}).

\section{Preliminaries} \label{sec:prel}

  We use $R^\times$ to denote the group of invertible elements of a unital associative ring $R$, and denote by $(R^\times)^n$ the subgroup $\{x^n\mid x\in R^\times\}$. Let $F$ be a field.

\subsection{Nonassociative algebras} \label{subsec:nonassalgs}

An $F$-vector space $A$ is an
\emph{algebra} over $F$ if there exists an $F$-bilinear map $A\times
A\to A$, $(x,y) \mapsto x \cdot y$.  We denote this \emph{multiplication} in $A$ simply by the juxtaposition $xy$. An algebra $A$ is called
\emph{unital} if there is an element in $A$, denoted by 1, such that
$1x=x1=x$ for all $x\in A$. We will only consider unital finite-dimensional nonzero algebras.
 If $A$ is a unital associative algebra, then we denote the group of invertible elements in $A$ by $A^\times$.

Define the \emph{associator}  of $A$ by  $[x, y, z] =
(xy) z - x (yz)$.  The {\it left, middle and right  nucleus} of $A$ are defined as ${\rm
Nuc}_l(A) = \{ x \in A \, \vert \, [x, A, A]  = 0 \}$, ${\rm Nuc}_m(A) = \{ x \in A \, \vert \,
[A, x, A]  = 0 \}$ and  ${\rm Nuc}_r(A) = \{ x \in A \, \vert \, [A,A, x]  = 0 \}$, respectively. These are  associative
subalgebras of $A$ and their intersection
 ${\rm Nuc}(A) = \{ x \in A \, \vert \, [x, A, A] = [A, x, A] = [A,A, x] = 0 \}$ is  the {\it nucleus} of $A$.
 The
 {\it center} of $A$ is ${\rm C}(A)=\{x\in A\,|\, x\in \text{Nuc}(A) \text{ and }xy=yx \text{ for all }y\in A\}$.

An algebra $A\not=0$ is called a \emph{division algebra} if for any $a\in A$, $a\not=0$, the left multiplication  with $a$, $L_a(x)=ax$, and the right multiplication with $a$, $R_a(x)=xa$, are bijective.
If $A$ has finite dimension over $F$, $A$ is a division algebra if
and only if $A$ has no zero divisors \cite[ pp. 15, 16]{Sch}.

 We call a nonassociative algebra a \emph{proper} nonassociative algebra, if it is not associative.
 A \emph{semifield} is a unital nonassociative division algebra over a finite field. A semifield that is not associative is called a \emph{proper} semifield.

 We denote the set of non-unital algebra structures on an
 $F$-vector space $V$ by ${\rm Alg}(V)$. Given $A\in {\rm Alg}(V)$, we write $xAy$ for the product of $x,y\in V$ in the algebra $A$ instead of simply juxtaposition, when the  algebra multiplication  is not ad hoc clear from the context (subsequently, usually it will be obvious).
 For $f,g,h\in {\rm Gl}(V)$  the algebra $A^{(f,g,h)}$ is called an
{\it isotope} of $A$, and is defined as the vector space $V$ together with the new multiplication
$$xA^{(f,g,h)}y=h(f(x)A g(y))$$
for $x,y\in V$. Two algebras $A, A'\in {\rm Alg}(V)$ are called {\it isotopic}, if
$xAy=h(f(x) A' g(y))$ for all $ x,y\in V.$ If $f=g=h^{-1}$ then $A\cong A'$.
 We know that $(A^{(f,g)})^{op}=(A^{op})^{(f,g)}$.  If $A$ is a division algebra then $A^{(f,g,h)}$ is a division algebra.

\subsection{Twisted polynomial rings}
Let  $K/F$ be a cyclic Galois field extension of degree $n$ with Galois group $G$ generated by $\sigma$.

 The \emph{twisted polynomial ring} $R=K[t;\sigma]$
is the ring of polynomials $\{a_0+a_1t+\dots +a_mt^m\,|\, a_i\in K\}$, with term-wise addition and multiplication given by the rule
$ta=\sigma(a)t$ for all $a\in K$ \cite{O}; see also \cite[Chapter I]{J96}. The constant nonzero polynomials $K^\times$ are the units of $R$.

For $f=a_0+a_1t+\dots +a_mt^m\in R$ with $a_m\not=0$ define ${\rm
deg}(f)=m$ and put ${\rm deg}(0)=-\infty$. Then ${\rm deg}(fg)={\rm deg}
(f)+{\rm deg}(g).$
 An element $f\in R$ is called \emph{irreducible} if  it is not a unit and it has no proper factors, \emph{i.e.} there do not exist
 $g,h\in R$, neither a unit, such that $f=gh$. 
 There exists a right division algorithm in $R=K[t;\sigma]$: for all $g,f\in R$, $f\neq 0$, there exist unique $r,q\in R$
  such that ${\rm deg}(r)<{\rm deg}(f)$ and $g=qf+r$ \cite[\S1.1]{J96}.
(Our terminology is the one used by Petit \cite{P66} and
 different from Jacobson's \cite{J96}, who calls what we call right a left division algorithm and vice versa.)

\subsection{Petit algebras, nonassociative cyclic algebras and Sandler's semifields} \label{subsec:nonass}

Let $f \in R=K[t;\sigma]$ have degree $m>1$ and let ${\rm mod}_r f$ denote the remainder of right division by $f$.
 The vector space $R_m=\{g\in K[t;\sigma]\,|\, {\rm deg}(g)<m\}$ together with the multiplication
 $g\circ h=gh \,\,{\rm mod}_r f $, where the right hand side is the remainder obtained after dividing $gh$ on the right by $f$,
 is a unital nonassociative algebra  over $F$ of dimension $mn$
  denoted by $R/Rf$ or $S_f$, and called a \emph{Petit algebra}.

  When ${\rm deg}(g)+{\rm deg}(h)<m$, the multiplication of $g$ and $h$ in $R/Rf$ is the same as the multiplication of $g$ and $h$ in $R$. The algebra $R/Rf$ is associative if and only if $f$ is right-invariant.
In that case, $R/Rf$ is the usual quotient algebra  $K[t;\delta]/(f)$ \cite[(7),(9), (10)]{P66}.

 If  $A=K[t;\sigma]/K[t;\sigma]f$ is a proper Petit algebra, then $C(A)=F$,
$${\rm Nuc}_l(A)={\rm Nuc}_m(A)=K$$
and
$${\rm Nuc}_r(A)=\{g\in S_f\,|\, fg\in Rf\}$$
 is the eigenspace of $f$ \cite{P66}.

In this paper we will investigate the case when $f(t)=t^m-a\in K[t;\sigma]$ is not right-invariant, which is the case if and only if $a\in K\smallsetminus F$ \cite{P66}. The algebra $K[t;\sigma]/K[t;\sigma](t^m-a)$ is a division algebra over
$F$ if and only if $f(t)=t^m-a\in K[t;\sigma]$ is irreducible \cite[(7)]{P66}.   We also know that $A=K[t;\sigma]/K[t;\sigma](t^m-a)$ is a division algebra if and only if  ${\rm Nuc}_r(A)$ is a division algebra, e.g. see \cite[Proposition 4]{G}. Over finite base fields and for  $n\geq m$, the opposite algebras of $K[t;\sigma]/K[t;\sigma](t^m-a)$ are called \emph{Sandler's semifields}.

 \begin{theorem}\label{thm:nuc}
 Let $A=K[t;\sigma]/K[t;\sigma](t^m-a)$ be a proper Petit algebra, i.e. $a\in K\setminus F$. Let $H=\{\tau\in G\,|\,\tau(a)=a \}$. Then $H=\langle \sigma^{s}\rangle$ for some integer $s$ such that $n=sr$. Put $E={\rm Fix}(\sigma^s)$. Then
 $${\rm Nuc}_r(A)=K[t;\sigma^s]/K[t;\sigma^s](t^m-a) $$
  is an associative quotient algebra with center $E$. In particular, $K\subset {\rm Nuc}_r(A)$ and hence $K={\rm Nuc}(A)$. Moreover,  if $n$ is prime then ${\rm Nuc}_r(A)=K$.
\end{theorem}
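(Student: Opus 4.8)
The plan is to analyze the right nucleus of $A = K[t;\sigma]/K[t;\sigma](t^m-a)$ using the general description recorded earlier, namely $\mathrm{Nuc}_r(A) = \{g \in S_f \mid fg \in Rf\}$, the eigenspace of $f = t^m - a$. First I would make the eigenspace condition explicit: for $g = \sum_{i=0}^{m-1} b_i t^i \in R_m$, the requirement $fg \in Rf$ means $(t^m - a)g = h(t^m-a)$ for some $h \in R$; comparing degrees forces $\deg h \le m-1$, and in fact $h$ must have the same leading behaviour as $g$, so one expects $h = g$ up to the lower-order corrections that vanish precisely when the coefficients satisfy a symmetry condition. Writing out $t^m g = \sum_i \sigma^m(b_i) t^{i+m}$ and $g t^m = \sum_i b_i t^{i+m}$, and $a g$ versus $g a = \sum_i b_i \sigma^i(a) t^i$, the condition $(t^m-a)g = g(t^m-a)$ (which is the natural guess for $h$) unwinds to $\sigma^m(b_i) = b_i$ for all $i$ — automatic since $\sigma^m$ could be nontrivial only if $m < n$, so here I must be careful — together with $b_i \sigma^i(a) = a b_i$, i.e. $b_i(\sigma^i(a) - a) = 0$. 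Hence $b_i \ne 0$ forces $\sigma^i(a) = a$, i.e. $\sigma^i \in H = \langle \sigma^s \rangle$, i.e. $s \mid i$.

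From this coefficient analysis the shape of $\mathrm{Nuc}_r(A)$ drops out: the only monomials $t^i$ that can appear in an element of the right nucleus are those with $s \mid i$, and conversely for such $i$ one checks $t^i$ does lie in the eigenspace because $t a = \sigma(a) t$ iterated $i$ times gives $t^i a = \sigma^i(a) t^i = a t^i$ when $s \mid i$, so $t^i$ commutes with $a$ and with $t^m$. Therefore $\mathrm{Nuc}_r(A)$ consists exactly of the $F$-linear combinations $\sum_{s \mid i,\, i < m} b_i t^i$ with $b_i \in K$ arbitrary, and the multiplication restricted to this subspace is governed by the relation $t^s b = \sigma^s(b) t^s$ — that is, $\mathrm{Nuc}_r(A)$ is naturally identified with $K[t;\sigma^s]/K[t;\sigma^s](t^m-a)$, using that $t^m - a$ is a genuine element of $K[t;\sigma^s]$ since $s \mid m$ (this divisibility $s\mid m$ needs a small argument: as $t^m$ must be in the nucleus, and $t^m = a \cdot 1$ is central-ish, one gets $m$ among the allowed exponents, forcing $s \mid m$; more robustly, $t^m g = \sigma^m(g) t^m$ must be compatible, and $\sigma^m$ must fix $a$, giving $\sigma^m \in H$ hence $s \mid m$). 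Since $\sigma^s$ generates the subgroup $H = \mathrm{Gal}(K/E)$ with $E = \mathrm{Fix}(\sigma^s)$, and $t^m - a$ is right-invariant in $K[t;\sigma^s]$ precisely because $a \in E$ (as $\sigma^s(a) = a$), this quotient is the associative cyclic algebra $K[t;\sigma^s]/(t^m-a)$ with center $E$ — the centrality of $E$ being the standard fact for (possibly nonassociative, but here associative) cyclic algebras, cf. the discussion of $(K/F,\sigma,a)$ in the introduction.

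The remaining assertions are then immediate corollaries of this identification. Since $s \mid i$ includes $i = 0$, the constants $K = K \cdot t^0$ lie in $\mathrm{Nuc}_r(A)$; combined with $\mathrm{Nuc}_l(A) = \mathrm{Nuc}_m(A) = K$ from Petit's theorem quoted above, we get $K \subseteq \mathrm{Nuc}_l \cap \mathrm{Nuc}_m \cap \mathrm{Nuc}_r = \mathrm{Nuc}(A)$, and since $\mathrm{Nuc}(A) \subseteq \mathrm{Nuc}_l(A) = K$, equality $\mathrm{Nuc}(A) = K$ follows. Finally, if $n$ is prime, then $H = \langle \sigma^s \rangle$ is either trivial or all of $G$; but $a \notin F$ means $H \ne G$, so $H = 1$, $s = n$, and the only exponent $i < m \le n$ (here $m \le n$ holds in the Sandler range, or one simply notes $i < m$ and $n \mid i$ forces $i = 0$ when $m \le n$) divisible by $n$ is $i = 0$, giving $\mathrm{Nuc}_r(A) = K$. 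The main obstacle I anticipate is the bookkeeping around whether $\sigma^m$ acts trivially — one must keep $m$ and $n$ genuinely distinct in the general Petit setting and not silently assume $m = n$ — and correspondingly the clean derivation of $s \mid m$ from the eigenspace condition; once that is pinned down, the rest is formal manipulation of twisted polynomials.
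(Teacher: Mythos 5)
Your overall strategy --- computing the eigenspace $\{g\in R_m \mid fg\in Rf\}$ coefficient by coefficient --- is exactly the ``straightforward calculation'' that the paper leaves to the reader, but there is a genuine error at the central step: you guess that the quotient $h$ in $fg=hf$ is $g$ itself, and this is false in general. Writing $g=\sum_{i=0}^{m-1}b_it^i$ and $h=\sum_{j=0}^{m-1}c_jt^j$, comparing coefficients of $t^{m+i}$ in $fg=hf$ forces $c_i=\sigma^m(b_i)$ (not $c_i=b_i$), and the coefficients of $t^i$ then give the single condition
\[
\sigma^m(b_i)\,\sigma^i(a)=a\,b_i,
\]
rather than the pair of conditions $\sigma^m(b_i)=b_i$ and $b_i(\sigma^i(a)-a)=0$ that you extract. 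You flag the danger (``here I must be careful'') but then proceed as if it were resolved. The two analyses agree precisely when $\sigma^m=\mathrm{id}$, i.e.\ when $n\mid m$ --- in particular in the case $n=m$ of nonassociative cyclic algebras, which is where the paper actually applies this theorem. Outside that range the condition is genuinely different: already for $i=0$ it reads $b_0=\sigma^m(b_0)$, so $\mathrm{Nuc}_r(A)\cap K=\mathrm{Fix}(\sigma^m)$ and even the inclusion $K\subseteq\mathrm{Nuc}_r(A)$ requires $\sigma^m=\mathrm{id}$. For the same reason your derivation of $s\mid m$ from ``$\sigma^m$ must fix $a$'' is unjustified for general $n,m$ (take $n=5$, $m=3$ and $a$ with trivial stabilizer), although it is automatic when $n=m$, since then $m=sr$.

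Once you restrict to $n\mid m$ (so $\sigma^m=\mathrm{id}$), the rest of your argument is sound and matches the paper's intent, with one cosmetic caveat: the span of the $Kt^i$ with $s\mid i$ and $i<m$ has $K$-dimension $m/s$, so the identification with ``$K[t;\sigma^s]/K[t;\sigma^s](t^m-a)$'' should be read after the substitution $u=t^s$, giving the associative cyclic algebra $K[u;\sigma^s]/K[u;\sigma^s](u^{r}-a)=(K/E,\sigma^s,a)$ of degree $r=m/s$ over $E=\mathrm{Fix}(\sigma^s)$; this is how the paper itself uses the result immediately afterwards, and the right-invariance of the modulus there (because $a\in E$) is what yields associativity and center $E$. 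Your concluding deductions --- $K\subseteq\mathrm{Nuc}_r(A)$ together with $\mathrm{Nuc}_l(A)=\mathrm{Nuc}_m(A)=K$ gives $\mathrm{Nuc}(A)=K$, and $H=\{1\}$ forces $\mathrm{Nuc}_r(A)=K$ when $n$ is prime --- are correct as stated.
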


\begin{proof}
A straightforward calculation shows that
$${\rm Nuc}_r(A)=K\oplus Kt^s \oplus \dots \oplus Kt^{(r-1)s}.$$
The linear subspace $K\oplus Kt^s \oplus \dots \oplus Kt^{(r-1)s}$ is the Petit algebra $K[t;\sigma^s]/K[t;\sigma^s](t^m-d)$
over $E={\rm Fix}(\sigma^s)$, where $a\in E$ and $[E:F]=s$. This is the classical associative quotient  algebra $K[t;\sigma^s]/(t^m-d)$
We know that $K= {\rm Nuc}_l(A)= {\rm Nuc}_m(A)$.
Since $K\subset {\rm Nuc}_r(A)$, we showed that $K={\rm Nuc}(A)$.
\end{proof}

Let us turn to the question of when two such algebras are isomorphic:

\begin{lemma}\label{Lem:differentextensions}
Suppose $K,K'$ are two distinct cyclic Galois extensions of a field $F$.  Then for any generators $\sigma,\sigma'$  of their respective Galois groups, and any $a\in K\smallsetminus F$, $a'\in K'\smallsetminus F$,
we have
$$K[t;\sigma]/K[t;\sigma](t^m-a)\not\cong K'[t;\sigma']/K'[t;\sigma'](t^m-a').$$
\end{lemma}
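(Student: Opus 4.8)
The plan is to recover the field $K$ from $A := K[t;\sigma]/K[t;\sigma](t^m-a)$ as its left nucleus and to exploit that the left nucleus is an invariant of the algebra. First I would note that, since $a\in K\smallsetminus F$ and $a'\in K'\smallsetminus F$, the skew polynomials $t^m-a$ and $t^m-a'$ are not right-invariant, so $A$ and $A':=K'[t;\sigma']/K'[t;\sigma'](t^m-a')$ are \emph{proper} Petit algebras (irrespective of whether those polynomials are irreducible). By the structure results for proper Petit algebras recalled in Section~\ref{subsec:nonass} (following \cite{P66}), this gives $\mathrm{Nuc}_l(A)=K$ and $\mathrm{Nuc}_l(A')=K'$.

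Next, supposing towards a contradiction that there is an isomorphism of $F$-algebras $\phi\colon A\to A'$, I would use that $\phi$ respects associators, $\phi([x,y,z]_A)=[\phi(x),\phi(y),\phi(z)]_{A'}$, to conclude that $\phi$ maps $\mathrm{Nuc}_l(A)$ into $\mathrm{Nuc}_l(A')$; running the same argument for $\phi^{-1}$ yields $\phi(\mathrm{Nuc}_l(A))=\mathrm{Nuc}_l(A')$. Since $\phi(1)=1$ and $\phi$ is $F$-linear, $\phi$ fixes $F\cdot 1$ pointwise, so the restriction $\phi|_K$ is an isomorphism of field extensions $K\to K'$ over $F$.

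It remains to observe that two $F$-isomorphic Galois extensions of $F$ must be equal: any $F$-isomorphism $K\to K'$ extends to an element $\gamma\in\mathrm{Gal}(\overline F/F)$ with $\gamma(K)=K'$, and normality of $K/F$ forces $\gamma(K)=K$, hence $K'=K$, contradicting the hypothesis that $K$ and $K'$ are distinct. I do not expect any genuine obstacle in carrying this out; the only points that deserve an explicit word are that $A$ and $A'$ really are proper (so that $\mathrm{Nuc}_l$ is the whole coefficient field rather than something larger) and the elementary remark about Galois extensions. In the subcase $[K:F]\neq[K':F]$ one could instead compare $F$-dimensions, $\dim_F A=m\,[K:F]$, but the left-nucleus argument handles all cases at once.
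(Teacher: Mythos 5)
Your proof is correct and follows exactly the paper's (one-line) argument: recover $K$ and $K'$ as the left nuclei of the two proper Petit algebras and note that an $F$-algebra isomorphism must identify them, which is impossible for distinct Galois extensions of $F$. The extra details you supply (properness of the algebras, preservation of associators, and the normality argument for why $F$-isomorphic Galois extensions coincide) are all accurate.
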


The proof is immediate, since  isomorphic algebras will have the same left nuclei. In the rest of the paper, we will employ the following strong classification theorem which is a direct consequence of \cite[Theorem 4.9]{CB}.

\begin{theorem}\label{T:classifyisomorphism}
Let $K/F$ be a cyclic Galois extension of $F$ of degree $n$ and let $\sigma$ be a generator of $\Gal(K/F)$.  Let $a,b\in \extK\smallsetminus F$.
For $n\geq m-1$, we have
$$
K[t;\sigma]/K[t;\sigma](t^m-a)\cong K[t;\sigma]/K[t;\sigma](t^m-b)
$$
if and only if there exists some $\tau\in \Gal(\extK/F)$ and some $k\in K^\times$  such that
\begin{equation}\label{equivrelation}
 \tau(a)=\prod_{i=0}^{m-1}\sigma^i(k)b.
\end{equation}
For $n<m-1$, if Equation \ref{equivrelation} holds for some  $\tau\in \Gal(\extK/F)$ and some $k\in K^\times$, then
$$
K[t;\sigma]/K[t;\sigma](t^m-a)\cong K[t;\sigma]/K[t;\sigma](t^m-b).
$$
\end{theorem}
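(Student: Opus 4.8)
The plan is to classify the $F$-algebra isomorphisms $\Phi\colon A\to A'$ between $A=K[t;\sigma]/K[t;\sigma](t^m-a)$ and $A'=K[t;\sigma]/K[t;\sigma](t^m-b)$ by tracking the image of the canonical generator $t$, exploiting the rigidity of the left nucleus. Set $R=K[t;\sigma]$, $f=t^m-a$, $f'=t^m-b$; since $a,b\notin F$, both $A$ and $A'$ are proper Petit algebras, so by Petit's results recalled above $\mathrm{Nuc}_l(A)=\mathrm{Nuc}_l(A')=K$ and $C(A)=C(A')=F$.

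\emph{Necessity, when $n\geq m-1$.} Given an isomorphism $\Phi$, it fixes $F\cdot 1$ pointwise (being $F$-linear and unital) and carries $\mathrm{Nuc}_l(A)$ onto $\mathrm{Nuc}_l(A')$, so $\tau:=\Phi|_K$ lies in $\Gal(K/F)$. Put $y:=\Phi(t)\in R_m$. Applying $\Phi$ to $t\circ c=\sigma(c)\circ t$ (valid in $A$, both sides having degree $<m$) and using that $\Gal(K/F)$ is abelian gives $y\,d=\sigma(d)\,y$ in $A'$ for every $d\in K$. Writing $y=\sum_{j=0}^{m-1}c_jt^j$ and comparing coefficients, this forces $c_j(\sigma^j(d)-\sigma(d))=0$ for all $d$, hence $c_j=0$ unless $j\equiv 1\pmod n$; since $n\geq m-1$, the only such $j\in\{0,\dots,m-1\}$ is $j=1$, so $y=kt$ with $k\in K^\times$. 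Computing $(kt)^{\circ m}$ in $A'$ exactly as one computes $t^{\circ m}=a$ in $A$ --- no reduction occurs until the final multiplication, which turns $t^m$ into $b$ --- one gets $(kt)^{\circ m}=\bigl(\prod_{i=0}^{m-1}\sigma^i(k)\bigr)b$; applying $\Phi$ to $t^{\circ m}=a$ then yields $\tau(a)=\prod_{i=0}^{m-1}\sigma^i(k)\,b$, which is \eqref{equivrelation}.

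\emph{Sufficiency, for any $n$.} Conversely, assume \eqref{equivrelation} holds for some $\tau\in\Gal(K/F)$ and $k\in K^\times$. Since $\tau$ commutes with $\sigma$, the assignments $c\mapsto\tau(c)$ and $t\mapsto kt$ extend to a ring automorphism $\psi$ of $R$, and $\psi(f)=(kt)^m-\tau(a)=u\,t^m-\tau(a)=u(t^m-b)=uf'$ with $u:=\prod_{i=0}^{m-1}\sigma^i(k)\in K^\times=R^\times$, so $\psi(Rf)=Rf'$. Hence $\psi$ descends to an $F$-linear bijection $\overline\psi\colon A\to A'$; since $\psi$ preserves degrees, $\overline\psi$ coincides with $\psi$ on $R_m$. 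To check $\overline\psi$ is multiplicative, take $g,h\in R_m$, write the $R$-product $gh=qf+r$ with $\deg r<m$ so that $g\circ h=r$, and apply the ring homomorphism $\psi$: then $\psi(g)\psi(h)=\psi(q)\psi(f)+\psi(r)$ with $\psi(q)\psi(f)\in Rf'$ and $\deg\psi(r)<m$, whence $\psi(g)\circ\psi(h)=\psi(r)=\overline\psi(g\circ h)$ by uniqueness of right division. So $\overline\psi$ is the desired isomorphism. (This just makes explicit, for $f=t^m-a$, the relevant direction of \cite[Theorem~4.9]{CB}.)

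\emph{The main obstacle.} The delicate point is the sufficiency direction, because $Rf$ and $Rf'$ are merely left ideals ($t^m-a$ is not right-invariant), so multiplicativity of $\overline\psi$ cannot be read off a quotient-ring statement but must be teased out of the compatibility of $\psi$ with right division, as above. On the necessity side, the hypothesis $n\geq m-1$ enters only in isolating $\Phi(t)=kt$; for $n<m-1$ the image $\Phi(t)$ could also involve $t^{1+n},t^{1+2n},\dots$, so the clean relation \eqref{equivrelation} need not follow --- which is exactly why the theorem is one-directional there.
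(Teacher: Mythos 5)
Your proof is correct, and it follows essentially the same route as the paper's source: the paper simply cites \cite[Theorem 4.9]{CB}, whose argument (reproduced in the paper's own Proposition and Lemma \ref{le:isomorphisms1}) is exactly your left-nucleus restriction to some $\tau\in\Gal(K/F)$, the coefficient comparison forcing $\Phi(t)=kt$ when $n\geq m-1$, and the descent of the ring automorphism $c\mapsto\tau(c)$, $t\mapsto kt$ of $K[t;\sigma]$ through right division for sufficiency. Your explicit treatment of why $\overline\psi$ is multiplicative despite $Rf$ being only a left ideal is exactly the point the paper glosses with ``$G(g\circ_f h)=G(g)\circ_{G(f)}G(h)$'' in Section \ref{sec:main}.
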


 By applying the norm of the field extension to both sides of Equation \ref{equivrelation}, we obtain:

\begin{corollary}\label{cor:classifyisomorphism}
Let $K/F$ be a cyclic Galois extension of $F$ of degree $n\geq m-1$ and let $\sigma$ be a generator of $\Gal(K/F)$.  If $a,b\in \extK\smallsetminus F$ such that
$$
N_{K/F}(a)\not\in (N_{K/F}(K^\times))^m N_{K/F}(b)
$$
then
$$
K[t;\sigma]/K[t;\sigma](t^m-a)\not\cong K'[t;\sigma']/K[t;\sigma](t^m-b).
$$
\end{corollary}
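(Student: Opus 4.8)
The plan is to argue by contraposition directly from Theorem \ref{T:classifyisomorphism}. Suppose, contrary to the desired conclusion, that $K[t;\sigma]/K[t;\sigma](t^m-a)\cong K[t;\sigma]/K[t;\sigma](t^m-b)$. Since $n\geq m-1$, the forward implication of Theorem \ref{T:classifyisomorphism} supplies some $\tau\in\Gal(K/F)$ and some $k\in K^\times$ with $\tau(a)=\prod_{i=0}^{m-1}\sigma^i(k)\,b$. The idea is then to eliminate both the automorphism $\tau$ and the product $\prod_i\sigma^i(k)$ by applying the field norm $N_{K/F}$, exploiting that $N_{K/F}$ is $\Gal(K/F)$-invariant and multiplicative.

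First I would record the two elementary facts needed: for every $c\in K^\times$ and every $\rho\in\Gal(K/F)$ one has $N_{K/F}(\rho(c))=N_{K/F}(c)$, because $\{\rho\sigma^j\}_j$ runs over $\Gal(K/F)$ exactly as $\{\sigma^j\}_j$ does; and $N_{K/F}\colon K^\times\to F^\times$ is a group homomorphism. Note that since $a,b\in K\smallsetminus F$, both sides of Equation \ref{equivrelation} are nonzero, so all the norms below are honest elements of $F^\times$.

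Next I would apply $N_{K/F}$ to both sides of $\tau(a)=\prod_{i=0}^{m-1}\sigma^i(k)\,b$. The left-hand side gives $N_{K/F}(\tau(a))=N_{K/F}(a)$ by Galois-invariance. On the right-hand side, multiplicativity and Galois-invariance of each factor give $N_{K/F}\!\big(\prod_{i=0}^{m-1}\sigma^i(k)\big)N_{K/F}(b)=\prod_{i=0}^{m-1}N_{K/F}(\sigma^i(k))\cdot N_{K/F}(b)=N_{K/F}(k)^m N_{K/F}(b)$. Hence $N_{K/F}(a)=N_{K/F}(k)^m N_{K/F}(b)\in (N_{K/F}(K^\times))^m N_{K/F}(b)$, contradicting the hypothesis. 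This establishes the contrapositive, hence the corollary.

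There is no genuine obstacle here; the only points requiring a little care are that one invokes the forward implication of Theorem \ref{T:classifyisomorphism} (which is precisely where $n\geq m-1$ is used — for $n<m-1$ only the converse of that theorem is available, so the hypothesis $n\geq m-1$ is essential to the argument), and that the membership $a,b\notin F$ guarantees the norms lie in $F^\times$ so the cosets of $(N_{K/F}(K^\times))^m$ in $F^\times$ are well defined.
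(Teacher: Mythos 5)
Your argument is correct and is essentially the paper's own proof: assume an isomorphism, invoke Theorem \ref{T:classifyisomorphism} to get the relation $\tau(a)=\prod_{i=0}^{m-1}\sigma^i(k)b$, and apply $N_{K/F}$ using its Galois-invariance and multiplicativity to conclude $N_{K/F}(a)=N_{K/F}(k)^m N_{K/F}(b)$, contradicting the hypothesis. The extra care you take (noting where $n\geq m-1$ is used and that the norms lie in $F^\times$) only makes the paper's terse argument more explicit.
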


\begin{proof}
Suppose there exists some $\tau\in \Gal(\extK/F)$ and some $k\in K^\times$  such that
$\tau(a)=\prod_{i=0}^{m-1}\sigma^i(k)b$. Apply $N_{K/F}$ on both sides to obtain that
$N_{K/F}(\tau(a))=N_{K/F}(a)=(\prod_{i=0}^{m-1}N_{K/F}(\sigma^i(k))N_{K/F}(b)$ which implies the assertion observing that
$N_{K/F}(\sigma^i(k))=N_{K/F}(k)$.
\end{proof}

\begin{theorem} \cite[Theorem 3.11]{CB}
Let $K/F$ be a cyclic Galois extension of $F$ of degree $n$ and let $\sigma$ be a generator of $\Gal(K/F)$.
Suppose either that $m=2$ or $3$, or that $m\geq 5$ is a prime and that $F$ contains a primitive $m$th root of unity. Let $a\in K\smallsetminus F$.
Then  $K[t;\sigma]/K[t;\sigma](t^m-a)$ is a division algebra if and only if
$$\sigma^{m-1}(z)\cdots \sigma(z) z\not=a$$
for all $z\in K$.
\end{theorem}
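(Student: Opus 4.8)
By \cite[(7)]{P66}, quoted above, $A:=K[t;\sigma]/K[t;\sigma](t^m-a)$ is a division algebra if and only if $f:=t^m-a$ is irreducible in $R:=K[t;\sigma]$, so the plan is to translate the displayed condition into a statement about linear factors of $f$. Running the right division algorithm, for $c\in K$ the monic polynomial $t-c$ is a right factor of $f$ (i.e.\ $f=q(t)(t-c)$ for some $q\in R$) if and only if $\sigma^{m-1}(c)\cdots\sigma(c)c=a$, and a parallel computation with left division shows $t-c$ is a left factor of $f$ if and only if $c\,\sigma^{-1}(c)\cdots\sigma^{-(m-1)}(c)=a$; substituting $c=\sigma^{m-1}(z)$ turns the second identity into the first with $z$ in place of $c$, so $f$ has a monic linear right factor precisely when it has a monic linear left factor, and precisely when $\sigma^{m-1}(z)\cdots\sigma(z)z=a$ for some $z\in K$. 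Since $m\geq 2$, such a linear factor makes $f$ reducible, which already yields the ``only if'' direction. For the ``if'' direction we must show: if $f$ is reducible, then $f$ has a linear factor. When $m=2$ or $3$ this is immediate: normalising a proper factorisation of the monic polynomial $f$ as $f=gh$ with $g,h$ monic of degrees in $\{1,\dots,m-1\}$, one of $\deg g,\deg h$ equals $1$ (as $m\le 3$), so $f$ has a monic linear factor (right or left); this disposes of $m=2,3$ with no root-of-unity hypothesis, as the theorem demands.

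The case $m\geq 5$ prime with a primitive $m$-th root of unity $\zeta\in F$ is where the content lies, because a proper factorisation of $f$ need not involve a linear factor (e.g.\ $5=2+3$), and the root of unity is exactly what excludes this. Let $\phi_\zeta$ be the $F$-algebra automorphism of $R=K[t;\sigma]$ fixing $K$ pointwise with $\phi_\zeta(t)=\zeta t$; this is well defined because $\zeta$ is central, it is bijective with inverse $\phi_{\zeta^{-1}}$, and $\phi_\zeta(f)=\zeta^m t^m-a=f$. Hence $\phi_\zeta(Rf)=Rf$, so $\phi_\zeta$ induces a $K$-linear automorphism of $A=R/Rf$ acting on the $K$-basis $\overline{1},\overline{t},\dots,\overline{t^{m-1}}$ by $\overline{t^{k}}\mapsto\zeta^{k}\overline{t^{k}}$, with the $m$ distinct eigenvalues $1,\zeta,\dots,\zeta^{m-1}$. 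Consequently every $\phi_\zeta$-stable $K$-subspace of $A$ is the span of a subset of this basis; if such a subspace is moreover a left $R$-submodule it is stable under left multiplication by $t$, and since $t\cdot\overline{t^{k}}=\overline{t^{k+1}}$ for $k<m-1$ and $t\cdot\overline{t^{m-1}}=\overline{t^{m}}=a\,\overline{1}$ with $a\neq 0$, its index set is closed under $k\mapsto k+1\ (\mathrm{mod}\ m)$; a subset of $\Z/m$ with that property is $\emptyset$ or all of $\Z/m$, so the submodule is $0$ or $A$.

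Now assume $f$ is reducible and pick a monic irreducible right factor $h$ of $f$ with $1\le d:=\deg h\le m-1$ (such an $h$ exists by refining any proper factorisation of $f$ into irreducibles). For $0\le i<m$ set $h^{(i)}:=\zeta^{-id}\phi_\zeta^{i}(h)$; then each $h^{(i)}$ is again a monic irreducible right factor of $f$ of degree $d$, and $\phi_\zeta^{i}$ carries the proper nonzero left $R$-submodule $M:=Rh/Rf$ of $A$ to $Rh^{(i)}/Rf$ (here one uses that $\phi_\zeta$ is surjective on $R$). The submodule $\bigcap_{i=0}^{m-1}\phi_\zeta^{i}(M)$ is $\phi_\zeta$-stable and contained in $M\neq A$, hence is $0$ by the previous paragraph; equivalently $\bigcap_i Rh^{(i)}=Rf$, so the natural $R$-module map $A=R/Rf\to\bigoplus_{i=0}^{m-1}R/Rh^{(i)}$ is injective. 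Its target is a semisimple $R$-module all of whose simple constituents, the $R/Rh^{(i)}$ (simple because each $h^{(i)}$ is irreducible, so $Rh^{(i)}$ is a maximal left ideal), have $K$-dimension $d$. Therefore $A$ is semisimple with every composition factor of $K$-dimension $d$, and comparing $K$-dimensions gives $d\mid\dim_K A=m$. As $m$ is prime and $d<m$, this forces $d=1$: thus $f$ has a monic linear factor, and the proof is complete.

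I expect the main obstacle to be precisely this argument for $m\ge 5$: without a primitive $m$-th root of unity one cannot exclude factorisations of $t^m-a$ having no linear factor, and it is the automorphism $\phi_\zeta$ together with the resulting semisimplicity of $A$ that does so. The remaining points are standard facts about the left principal ideal domain $K[t;\sigma]$ that should be recorded but are routine: that monic right factors of $f$ correspond to left $R$-submodules of $R/Rf$, that irreducibility of a polynomial is equivalent to simplicity of the cyclic module it cuts out, and that a submodule of a finite-length semisimple module is again semisimple with simple constituents among the same isomorphism classes.
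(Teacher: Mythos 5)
The paper does not prove this statement; it is quoted verbatim from \cite[Theorem 3.11]{CB} (Brown's thesis, ultimately going back to Petit's criteria (17)--(18) in \cite{P66}), so there is no in-text proof to compare against. Your argument is, as far as I can check, correct and self-contained. The reduction of ``division algebra'' to ``irreducible'' is exactly the fact the paper quotes from \cite[(7)]{P66}; the remainder computations identifying monic linear right (resp.\ left) factors of $t^m-a$ with solutions of $\sigma^{m-1}(z)\cdots\sigma(z)z=a$ are standard and correct, and they dispose of $m=2,3$. The substantive step --- showing for $m\geq 5$ prime that reducibility forces a \emph{linear} factor --- is handled by the twist $\phi_\zeta\colon t\mapsto \zeta t$, which fixes $f$, has $m$ distinct eigenvalues on $R/Rf$, and therefore admits no proper nonzero invariant $R$-submodule; intersecting the $\phi_\zeta$-orbit of $Rh/Rf$ then embeds $R/Rf$ into a semisimple module all of whose simple constituents have $K$-dimension $d=\deg h$, giving $d\mid m$ and hence $d=1$. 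This is the right mechanism (a plain Jordan--H\"older argument would not work, since distinct composition factors of $R/Rf$ can a priori have distinct dimensions, which is precisely why the root of unity is in the hypothesis), and it is in the same spirit as the classical proof. Two small points you correctly rely on and should cite or record explicitly: that every left ideal of $K[t;\sigma]$ is principal (so irreducibility of $h$ gives simplicity of $R/Rh$), and that a submodule of a finite direct sum of simple modules is a direct sum of modules isomorphic to some of those simples; both are routine, as you say.
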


\begin{corollary}\label{cor:div}
Let $K/F$ be a cyclic Galois extension of $F$ of degree $n$ and let $\sigma$ be a generator of $\Gal(K/F)$.
Suppose either that $m=2$ or $3$, or that $m\geq 5$ is a prime and $F$ contains a primitive $m$th root of unity.
 \\ (i) Let $a\in K\smallsetminus F$ such that $N_{K/F}(a)\not\in N_{K/F}(K^\times)^m$, then $K[t;\sigma]/K[t;\sigma](t^m-a)$ is a division algebra.
\\ (ii) If $F$ is a finite field and $N_{K/F}(a)\not\in (F^\times)^m$, then  $K[t;\sigma]/K[t;\sigma] (t^m-a)$ is a division algebra.
\end{corollary}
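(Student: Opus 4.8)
The plan is to deduce both parts directly from the division-algebra criterion of the preceding theorem (\cite[Theorem 3.11]{CB}): under the stated hypotheses on $m$, the algebra $K[t;\sigma]/K[t;\sigma](t^m-a)$ fails to be a division algebra precisely when $a=\sigma^{m-1}(z)\cdots\sigma(z)z$ for some $z\in K$. The idea is to apply the field norm $N_{K/F}$ to such a relation and exploit that $N_{K/F}$ is multiplicative and $\Gal(K/F)$-invariant; this will force $N_{K/F}(a)$ into $N_{K/F}(K^\times)^m$, which is exactly the set we are assuming $N_{K/F}(a)$ avoids.

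For part (i) I would argue by contraposition. Suppose $K[t;\sigma]/K[t;\sigma](t^m-a)$ is \emph{not} a division algebra; then by the cited theorem there is $z\in K$ with $a=\sigma^{m-1}(z)\cdots\sigma(z)z$. Since $a\in K\smallsetminus F$ we have $a\neq 0$, hence $z\in K^\times$. Because $N_{K/F}\circ\sigma^i=N_{K/F}$ for every $i$, applying $N_{K/F}$ to the relation gives $N_{K/F}(a)=N_{K/F}(z)^m\in N_{K/F}(K^\times)^m$. Contrapositively, $N_{K/F}(a)\notin N_{K/F}(K^\times)^m$ forces the algebra to be a division algebra.

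For part (ii) I would invoke the standard fact that for a finite field $F=\mathbb{F}_q$ and $K=\mathbb{F}_{q^n}$ the norm map $N_{K/F}\colon K^\times\to F^\times$ is surjective (e.g. by Hilbert 90 or a direct counting argument). Then $N_{K/F}(K^\times)=F^\times$, so $N_{K/F}(K^\times)^m=(F^\times)^m$, and the hypothesis $N_{K/F}(a)\notin(F^\times)^m$ is exactly the hypothesis of part (i). Hence (ii) follows from (i).

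There is essentially no serious obstacle here: the core of the argument is a one-line use of the multiplicativity and Galois-invariance of the norm, supplemented by surjectivity of the norm over finite fields. The only points needing a moment's care are noting that $a\neq 0$ (so that $z\neq 0$ and $N_{K/F}(z)$ genuinely lies in $N_{K/F}(K^\times)$) and recalling the surjectivity statement for the finite-field norm used in (ii).
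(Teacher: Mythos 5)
Your proposal is correct and follows essentially the same argument as the paper: apply $N_{K/F}$ to the relation $a=\sigma^{m-1}(z)\cdots\sigma(z)z$ supplied by the cited division-algebra criterion, use Galois-invariance of the norm to get $N_{K/F}(a)=N_{K/F}(z)^m$, and for (ii) invoke surjectivity of the norm over finite fields. Your write-up is in fact slightly more careful than the paper's (noting $z\neq 0$ and spelling out the reduction of (ii) to (i)).
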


\begin{proof}
Suppose there exists some $z\in K$, such that $\sigma^{m-1}(z)\cdots \sigma(z) z=a$. Apply $N_{K/F}$ to both sides if this equation, then
$$N_{K/F}(\sigma^{m-1}(z)\cdots \sigma(z) z)=N_{K/F}(z)^m=N_{K/F}(a).$$
This yields the assertion.
\end{proof}

For $n=m$, the unital central nonassociative algebra $K[t;\sigma]/K[t;\sigma](t^m-a)$
is called a \emph{nonassociative  cyclic algebra of degree $n$} and denoted by  $(K/F, \sigma, a)$  (cf. \cite{S12} who studies the opposite algebras, however).  When $a\in F^\times$, the associative algebra $(K/F,\sigma, a)$ is a classical associative central simple \emph{cyclic algebra} of degree $m$ \cite[p.~19]{J96}.
For $n=m$ we also have a stronger result than Corollary \ref{cor:div}:

\begin{theorem} \cite{P66}
Suppose either that $m=2$ or $3$, or that $m\geq 5$ is a prime such that  $F$ contains a primitive $m$th root of unity. Let $a\in K\smallsetminus F$. Then $(\extK/F, \sigma,a)$ is a division algebra.
\end{theorem}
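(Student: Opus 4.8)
The plan is to deduce the statement from the division criterion \cite[Theorem 3.11]{CB} stated above, which applies under exactly the hypotheses imposed on $m$ here. Recall that for $n=m$ the algebra $(\extK/F,\sigma,a)$ is by definition $K[t;\sigma]/K[t;\sigma](t^m-a)$. By that criterion, for $a\in K\smallsetminus F$ this algebra is a division algebra if and only if
$$\sigma^{m-1}(z)\cdots\sigma(z)\,z\neq a\qquad\text{for all }z\in K,$$
so it suffices to show that the left-hand side never equals $a$.

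First I would observe that, since $n=m$, the Galois group $\Gal(\extK/F)$ is precisely $\{1,\sigma,\sigma^{2},\dots,\sigma^{m-1}\}$, and hence, $K$ being commutative,
$$\sigma^{m-1}(z)\cdots\sigma(z)\,z=\prod_{i=0}^{m-1}\sigma^{i}(z)=\prod_{\tau\in\Gal(\extK/F)}\tau(z)=N_{K/F}(z).$$
Thus for every $z\in K$ the element $\sigma^{m-1}(z)\cdots\sigma(z)\,z$ is the field norm of $z$, and in particular it lies in $F={\rm Fix}(\sigma)$. Since by hypothesis $a\in K\smallsetminus F$, we conclude $a\neq N_{K/F}(z)$ for all $z\in K$; the criterion is therefore satisfied, and $(\extK/F,\sigma,a)$ is a division algebra.

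I do not expect any real obstacle here: once \cite[Theorem 3.11]{CB} is available, the whole content is the remark that in the case $n=m$ the ``partial norm'' $\sigma^{m-1}(z)\cdots\sigma(z)\,z$ is an honest field norm and hence confined to $F$, whereas $a$ lies in its complement $K\smallsetminus F$. (This is exactly what fails when $n>m$: there the product ranges over only part of $\Gal(\extK/F)$ and need not be Galois-invariant, which is why in that generality Corollary \ref{cor:div} only produces a division algebra under the additional assumption $N_{K/F}(a)\notin N_{K/F}(K^\times)^{m}$.) If one preferred not to invoke \cite[Theorem 3.11]{CB}, an alternative would be to prove directly that $t^m-a$ is irreducible in $K[t;\sigma]$ whenever $a\in K\smallsetminus F$ under the stated hypotheses on $m$; but that requires a factorization analysis in the twisted polynomial ring and is considerably more involved than the reduction above.
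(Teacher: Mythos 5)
Your argument is correct. The paper itself gives no proof of this statement --- it is quoted from \cite{P66} as a known result --- so there is nothing to match it against line by line; but your deduction is exactly the standard one and is fully justified by material already stated in the paper. The key point, that for $n=m$ the product $\sigma^{m-1}(z)\cdots\sigma(z)z=\prod_{i=0}^{m-1}\sigma^i(z)$ runs over all of $\Gal(K/F)=\langle\sigma\rangle$ and is therefore the field norm $N_{K/F}(z)\in F$, immediately rules out the equality $\sigma^{m-1}(z)\cdots\sigma(z)z=a$ for $a\in K\smallsetminus F$, so the irreducibility criterion of \cite[Theorem 3.11]{CB} applies. This is also consonant with how the paper derives the weaker Corollary \ref{cor:div} for general $n$ (by applying $N_{K/F}$ to the criterion), and your closing remark correctly identifies why the $n=m$ case is stronger: there the partial norm is genuinely Galois-invariant, whereas for $n>m$ it need not be. The only caveat worth recording is that your proof establishes the theorem under the hypotheses of \cite[Theorem 3.11]{CB} as stated (which are the same hypotheses as in the theorem itself), so no generality is lost; the argument is complete as written.
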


By Theorem \ref{thm:nuc}, for
$H=\{\tau\in G\,|\,\tau(a)=a \}$ there is some integer $s$ such that $n=sr$ and $H=\langle \sigma^{s}\rangle$. 
Then $${\rm Nuc}_r((K/F,\sigma, a))=(K/E,\sigma^s, a)$$
 is a cyclic algebra of degree $r$ over the field $E={\rm Fix}(\sigma^s)$, where $[E:F]=s$. This implies that  ${\rm Nuc}((K/F,\sigma, a))=K$. If  $(K/F, \sigma, a)$ is not associative and $m$ is prime, then ${\rm Nuc}_r((K/F, \sigma, a))=K$. In particular, this means:

\begin{corollary}
Let $F$ be a finite field. If $s>1$ then $(K/F, \sigma, a)$ is not a semifield.
\end{corollary}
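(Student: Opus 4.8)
The plan is to read the statement off from the description of the right nucleus of $(K/F,\sigma,a)$ in Theorem~\ref{thm:nuc}, using only Wedderburn's little theorem and the elementary fact that a finite-dimensional subalgebra of a division algebra is itself a division algebra. With $H=\{\tau\in G\mid\tau(a)=a\}=\langle\sigma^{s}\rangle$, $n=sr$, and $E={\rm Fix}(\sigma^{s})$ (so $[E:F]=s$, $[K:E]=r$), Theorem~\ref{thm:nuc} gives
\[
{\rm Nuc}_r\bigl((K/F,\sigma,a)\bigr)=K\oplus Kt^{s}\oplus\cdots\oplus Kt^{(r-1)s}=(K/E,\sigma^{s},a),
\]
an associative cyclic algebra of degree $r$ over the finite field $E$, with $a\in E^{\times}$ since $a\in{\rm Fix}(H)=E$ and $a\neq 0$. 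If $r=1$ this right nucleus is just $K$ and nothing is forced; the case to treat is $r>1$, i.e.\ the case where $a$ is fixed by a non-trivial element of $G$, equivalently where $E$ is a proper intermediate field.

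So assume $r>1$. Then $\sigma^{s}$ has order $r>1$, so $\sigma^{s}(b)\neq b$ for some $b\in K$, and --- since $s<m$ --- the product $t^{s}\cdot b$ in the Petit algebra agrees with the product in $K[t;\sigma]$, namely $t^{s}\cdot b=\sigma^{s}(b)\,t^{s}\neq b\cdot t^{s}$; thus ${\rm Nuc}_r\bigl((K/F,\sigma,a)\bigr)$ is noncommutative. Since $F$ is finite, this right nucleus is a finite, unital, noncommutative associative ring, so by Wedderburn's little theorem it is not a division ring, and, being finite, it therefore contains a nonzero non-invertible element and hence a zero divisor. As the right nucleus is a subalgebra of $(K/F,\sigma,a)$ with the restricted multiplication, $(K/F,\sigma,a)$ has zero divisors too and so is not a division algebra --- equivalently, invoke ``$(K/F,\sigma,a)$ is a division algebra if and only if its right nucleus is'' from \cite[Proposition~4]{G}. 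A non-division algebra over a finite field is not a semifield, which is the assertion; in particular $(K/F,\sigma,a)$ can be a semifield only when $r=1$.

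There is no genuinely hard step here; the structural content is all in Theorem~\ref{thm:nuc}. The two points that merit a little care are that the multiplication of the right nucleus really is the one inherited from $K[t;\sigma]$ on the low-degree elements used above --- so the nucleus is honestly noncommutative once $\sigma^{s}\neq\mathrm{id}$ --- and the standard but not-quite-automatic implication ``finite ring, not a division ring $\Rightarrow$ has a zero divisor'', which then transfers to the ambient nonassociative algebra precisely because the nucleus is an honest subalgebra. If one prefers to avoid Wedderburn's theorem, one can argue instead that over the finite field $E$ the norm $N_{K/E}$ is surjective, so $a$ is a norm from $K$; hence the associative cyclic algebra $(K/E,\sigma^{s},a)$ is split, i.e.\ isomorphic to $M_{r}(E)$, which has zero divisors as soon as $r>1$.
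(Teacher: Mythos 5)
Your proof is correct and follows essentially the same route as the paper: identify the right nucleus as an associative cyclic algebra of degree $r$ over the finite field $E$, observe that for $r>1$ it cannot be a division ring (by Wedderburn's little theorem, or equivalently because surjectivity of $N_{K/E}$ splits it as $M_r(E)$), and transfer the resulting zero divisors to the ambient algebra. You are also right that the operative hypothesis is $r>1$ rather than the literal ``$s>1$'' of the statement; the paper's own one-line proof likewise argues from ``the right nucleus is a cyclic algebra of degree $r>1$''.
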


\begin{proof}
Over finite fields there are no cyclic algebras which are division algebras  other than the field itself. So here the right nucleus  is a cyclic algebra of degree $r>1$,  hence has zero divisors and so does the algebra $(K/F, \sigma, a)$.
\end{proof}

\section{Different generators of $\Gal(K/F)$ yield nonisomorphic Sandler algebras} \label{sec:main}

Let $F$ be an arbitrary field such that $F$ admits a cyclic Galois extension $K$ of degree $n>2$ and let $\sigma$ be a generator of the Galois group of $K/F$. We will now generalize Theorem \cite[Theorem 5.1]{NP} which covers the case $n=m$ that occurs when studying nonassociative cyclic algebras $(K/F,\sigma,a) $ 
to the case that $n \geq m-1$, starting with some general observations.

Let $f,g\in R=K[t;\sigma]$  be not right-invariant.
For  two proper  Petit algebras $R/Rf$ and $R/Rg$, every isomorphism $G:R/Rf\rightarrow R/Rg$ is induced by an isomorphism $G: K[t;\sigma]\rightarrow K[t;\sigma]$ which restricts to an automorphism $\tau\in {\rm Aut}(K)$  \cite[Theorem 4.9]{CB}.

  It is also straightforward to see that every isomorphism $G: K[t;\sigma]\rightarrow K[t;\sigma]$ canonically induces an isomorphism between the Petit algebras $K[t;\sigma]/K[t;\sigma]f$  and  $ K[t;\sigma]/K[t;\sigma]G(f)$, since $G|_{R_m}:R_m\rightarrow R_m$ and   $G(g\circ_f h)=G(gh -s f)=G(h)G(g)-G(s)G(f)=G(g)\circ_{G(f)} G(h)$ for some $s\in R_m$.

  Let now $K$ and $K'$ be two cyclic Galois field extensions of $F$ of degree $n$ and $\Gal(K/F)=\langle \sigma_1 \rangle$, $\Gal(K'/F)=\langle \sigma_2\rangle$. Consider the twisted polynomial rings $R=K[t;\sigma_1]$ and $R'=K'[t;\sigma_2]$.

\begin{proposition}
(i) Let $\tau:K\rightarrow K'$ be an isomorphism.
Then the map $G: K[t;\sigma_1]\rightarrow K'[t;\sigma_2]$ defined via
 $$G(t)=\sum_{i=0}^{m-1} k_i t^i,$$
  $k_i \in K$, extends $\tau$ to an isomorphism $G: K[t;\sigma_1] \rightarrow K'[t;\sigma_2]$ if and only if $G(t)=kt$ for some $k\in K^\times $ and $\sigma_2\circ \tau =\tau\circ \sigma_1$.
\\ (ii)
 Let $\sigma_1$ and $\sigma_2$ be two generators of ${\rm Gal}(K/F)$, and let $\tau\in{\rm Gal}(K/F)$. Then the map $G: K[t;\sigma_1]\rightarrow K[t;\sigma_2]$ defined via
 $$G(t)=\sum_{i=0}^{m-1} k_i t^i,$$
  $k_i \in K$, extends $\tau$ to an isomorphism $G: K[t;\sigma_1] \rightarrow K[t;\sigma_2]$ if and only if $G(t)=kt$ for some $k\in K^\times $ and $\sigma_1=\sigma_2$.
  \end{proposition}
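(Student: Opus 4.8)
The plan is to reduce everything to part (i) and then extract the extra information in part (ii). First I would observe that part (ii) is the special case $K'=K$ of part (i), except that part (i) as stated allows $\sigma_1,\sigma_2$ to be generators of the Galois groups of possibly different fields, so for (ii) we are in the situation $\tau\in\Gal(K/F)$ and the compatibility condition $\sigma_2\circ\tau=\tau\circ\sigma_1$ becomes a statement purely inside the abelian group $\Gal(K/F)$: since $\Gal(K/F)$ is abelian, $\sigma_2\circ\tau=\tau\circ\sigma_2$, so the condition $\sigma_2\circ\tau=\tau\circ\sigma_1$ forces $\tau\circ\sigma_2=\tau\circ\sigma_1$, hence $\sigma_1=\sigma_2$ after cancelling the invertible $\tau$. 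Thus once (i) is proved, (ii) follows by specializing and invoking commutativity.

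So the real work is part (i). For the ``if'' direction, suppose $G(t)=kt$ with $k\in K^\times$ and $\sigma_2\circ\tau=\tau\circ\sigma_1$; I would simply check that the $K$-linear (via $\tau$) extension of $\tau$ sending $t\mapsto kt$ respects the defining relation $ta=\sigma_1(a)t$ in $K[t;\sigma_1]$. Concretely, $G(ta)=G(\sigma_1(a)t)=\tau(\sigma_1(a))\,kt$, while $G(t)G(a)=kt\,\tau(a)=k\,\sigma_2(\tau(a))\,t$ in $K'[t;\sigma_2]$; these agree precisely because $\tau\circ\sigma_1=\sigma_2\circ\tau$. One also checks multiplicativity on products of the $t^i$'s, which is routine, and bijectivity is clear since $k\in K'^\times$ and $\tau$ is an isomorphism. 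For the ``only if'' direction, assume $G\colon K[t;\sigma_1]\to K'[t;\sigma_2]$ is a ring isomorphism extending $\tau$ with $G(t)=\sum_{i=0}^{m-1}k_it^i$; I want to force this sum to collapse to a single linear term $kt$. The key is to exploit the relation $ta=\sigma_1(a)t$ for a primitive element $a$ of $K/F$. Applying $G$ gives $G(t)\tau(a)=\tau(\sigma_1(a))\,G(t)$, i.e.
\[
\Bigl(\sum_{i=0}^{m-1}k_it^i\Bigr)\tau(a)=\tau(\sigma_1(a))\Bigl(\sum_{i=0}^{m-1}k_it^i\Bigr)
\]
in $K'[t;\sigma_2]$. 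Moving $\tau(a)$ past $t^i$ via $t^i\tau(a)=\sigma_2^i(\tau(a))t^i$ and comparing coefficients of $t^i$ yields, for each $i$,
\[
k_i\,\sigma_2^i(\tau(a))=\tau(\sigma_1(a))\,k_i,
\]
so for every index $i$ with $k_i\neq 0$ we get $\sigma_2^i(\tau(a))=\tau(\sigma_1(a))=\tau\sigma_1\tau^{-1}(\tau(a))$. Since $a$ generates $K/F$, $\tau(a)$ generates $K'/F$, and two automorphisms of $K'/F$ agreeing on a generator are equal; hence $\sigma_2^i=\tau\sigma_1\tau^{-1}$ on $K'$ for every $i$ with $k_i\neq 0$. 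But $\tau\sigma_1\tau^{-1}$ is a fixed generator of $\Gal(K'/F)$, and distinct powers $\sigma_2^i$, $\sigma_2^j$ ($0\le i,j\le m-1$) are distinct whenever $i\not\equiv j\pmod n$; combined with $n\ge 3\ge\cdots$ — more precisely, since $G(t)$ has degree $<m\le n+1$ and $G$ must preserve degree (an isomorphism of twisted polynomial rings sends $t$ to a degree-one element, as it must invert), one shows only a single $k_i$ can be nonzero and that index must make $\sigma_2^i$ equal to the generator $\tau\sigma_1\tau^{-1}$, forcing $i=1$. Then $G(t)=k_1 t$ with $k_1\in K'^\times$ (it is nonzero, and invertibility of $G$ forces it into $K'^\times$), and the displayed relation with $i=1$ reads $\sigma_2(\tau(a))=\tau(\sigma_1(a))$ for the generator $a$, hence $\sigma_2\circ\tau=\tau\circ\sigma_1$ on all of $K$.

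The main obstacle I anticipate is the step that pins $G(t)$ down to a genuinely linear term: a priori $G(t)=\sum_{i=0}^{m-1}k_it^i$ could have several nonzero coefficients, and one must rule this out. The cleanest route is degree-theoretic: any isomorphism $G$ of twisted polynomial rings is determined on $K'$ by $\tau$ and on $t$ by $G(t)$, and since $G$ is surjective there must be a preimage of $t\in K'[t;\sigma_2]$, which by the multiplicativity of degree forces $G$ to carry degree-one elements to degree-one elements, in particular $\deg G(t)=1$, so $k_i=0$ for $i\neq 1$ and $k_1\neq 0$; then $k_1\in K'^\times$ since $G$ is invertible. I would double-check that this degree argument is valid even though $G(t)$ is written as a polynomial of degree $<m$ (this is just the hypothesis that $G$ comes from an ambient isomorphism of the full polynomial rings, already recorded in the discussion preceding the proposition via \cite[Theorem 4.9]{CB}), after which the coefficient comparison above finishes both directions.
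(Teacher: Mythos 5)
Your proof is correct, and it arrives at the same structural conclusion as the paper but by a more self-contained route. The paper's own proof of (i) is essentially an application of Rimmer's theorem \cite[Theorem 3]{Ri}, which states that $G(t)=\sum_{i}k_it^i$ extends $\tau$ to an isomorphism if and only if $\sigma_2(\tau(a))k_i=k_i\tau(\sigma_1^i(a))$ for all $a\in K$, $k_1\in K'^\times$, and $k_i$ is nilpotent for $i\geq 2$; over a field the nilpotency condition collapses to $k_i=0$, and the $i=0$ instance of the first condition kills $k_0$ because $\sigma_1\neq\mathrm{id}$. Your coefficient comparison obtained from $G(ta)=G(\sigma_1(a)t)$ rederives exactly Rimmer's first condition, so the computational core is the same; what you do differently is replace the cited nilpotency condition by a degree argument: since degrees add in the domain $K'[t;\sigma_2]$, the image of $G$ consists of polynomials whose degree is a multiple of $\deg G(t)$, so surjectivity forces $\deg G(t)=1$. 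That argument is sound and has the advantage of making the proof independent of \cite{Ri}. The reduction of (ii) to (i) via commutativity of $\Gal(K/F)$ is exactly the paper's. Two small points to tidy: you waver between the ``distinct powers $\sigma_2^i$'' route (which would require a relation between $m$ and $n$ that the proposition does not assume) and the degree route --- only the latter is needed, and it works unconditionally; and for the final identity it is cleaner to observe that the coefficient relation $k_1\sigma_2(\tau(a))=\tau(\sigma_1(a))k_1$ holds for \emph{every} $a\in K$, which yields $\sigma_2\circ\tau=\tau\circ\sigma_1$ on all of $K$ directly, without invoking that an automorphism is determined by its value on a primitive element (a step that implicitly assumes $\tau$ is an $F$-isomorphism).
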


\begin{proof}
(i)
 By \cite[Theorem 3]{Ri}, the map $G$ defined via $G(t)=\sum_{i=0}^{m-1} k_i t^i$, $k_i \in K$, extends $\tau$ to an isomorphism $G: K[t;\sigma_1] \rightarrow K'[t;\sigma_2]$ if and only if
  \begin{enumerate}
  \item $\sigma_2(\tau(a)) k_i=k_i\tau(\sigma_1^i (a))$  for all $ a\in K $ and $i\in\{0,\dots m\},$
\item $  k_1\in K^\times,$
\item $ k_i \text{ is nilpotent for all }  i\in\{2,\dots m\}.$
\end{enumerate}
 Since  $K$ and $K'$ are fields they only have trivial nilpotent elements.  This means
 (3) is the same as
 $k_i=0 $ for all $ i\in\{2,\dots m\}$. So in our situation the map $G$ defined via $G(t)=\sum_{i=0}^{m-1} k_i t^i$, $k_i \in K$, extends $\tau$ to an isomorphism $G: K[t;\sigma_1] \rightarrow K'[t;\sigma_2]$ if and only if
  \begin{enumerate}
  \item $\sigma_2(\tau(a)) k_1=k_1\tau(\sigma_1 (a))$, $\sigma_2(\tau(a)) k_0=k_0\tau(a)$  for all $a\in K$,
\item $  k_1\in K^\times,$
\item $  k_i=0 $ for all $ i\in\{2,\dots m\}.$
\end{enumerate}
Of course, $\sigma_2(\tau(a)) k_1=k_1\tau(\sigma_1 (a))$ here is equivalent to $\sigma_2\tau=\tau\sigma_1$
and hence $\sigma_2(\tau(a)) k_0=k_0\tau(a)$ is the same as $k_0(\tau(\sigma_1(a))-a)=0$ for all $a\in K$, which means $k_0=0$, since $\sigma_1\not=id$. Thus (1), (2) and (3) are equivalent to $G(t)=kt$ for some $k\in K^\times$ and satisfies $\sigma_2\circ \tau =\tau\circ \sigma_1$.
\\ (ii) follows from (i): $\tau$ extends to an isomorphism $G: K[t;\sigma_1] \rightarrow K[t;\sigma_2]$ if and only if
    $G(t)=kt$ for some $k\in K^\times$ and
  $\sigma_2(\tau(a)) k=k\tau(\sigma_1 (a))$
   for all $a\in K$.  This last condition means that we need
  $$\sigma_2\circ \tau=\tau\circ \sigma_1. $$
  If, as we do, we assume that $K'=K$ is a cyclic Galois field extension, this is  satisfied if and only if $\sigma_1=\sigma_2$.
\end{proof}

\begin{lemma}
 \label{le:isomorphisms1}
 Let  $\sigma_1$ and $ \sigma_2$ be any two distinct generators of $\Gal(K/F)$, i.e.
  There exists $1<j<n$, with $(j,n)=1$, such that $\sigma_1 = \sigma_2^j$. Let $f\in K[t;\sigma_1]$ and $g\in K[t;\sigma_2]$ be both not right-invariant of degree $m$, and suppose that  $n \geq m-1$. Let
 $$H: K[t;\sigma_1]/K[t;\sigma_1]f \rightarrow K[t;\sigma_2]/K[t;\sigma_2]g$$
  be an $F$-algebra isomorphism between two proper nonassociative Petit algebras. Then $H$ restricts on $K$ to some $\tau \in {\rm Gal}(K/F)$ and
 $$H(t)=kt^j$$
  for some $k\in K^\times$.
\end{lemma}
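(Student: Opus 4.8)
The plan is to read off both asserted properties of $H$ directly from the nucleus description of proper Petit algebras and from the defining relation $ta=\sigma_i(a)t$, using only material recalled in Section~\ref{sec:prel}. First I would identify $\tau$. Since $f\in K[t;\sigma_1]$ and $g\in K[t;\sigma_2]$ are not right-invariant, $A_1:=K[t;\sigma_1]/K[t;\sigma_1]f$ and $A_2:=K[t;\sigma_2]/K[t;\sigma_2]g$ are proper Petit algebras, and by the nucleus formulas of Section~\ref{sec:prel} their left nuclei are precisely the copies of $K$ embedded as the constant polynomials. Any $F$-algebra isomorphism preserves the associator, hence carries left nucleus onto left nucleus; thus $H(K)=K$, and since $H$ is $F$-linear and multiplicative, $H|_K$ is an $F$-algebra isomorphism $K\to K$, that is, some $\tau\in\Gal(K/F)$.

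Next I would write $H(t)=\sum_{i=0}^{m-1}k_i t^i$ with $k_i\in K$, the unique representative of degree less than $m$, and push the relation $ta=\sigma_1(a)t$ through $H$. For $a\in K$ one has $\deg t+\deg a<m$ and $\deg\sigma_1(a)+\deg t<m$, so the products $t\circ_f a$ and $\sigma_1(a)\circ_f t$ in $A_1$ coincide with the corresponding products in $K[t;\sigma_1]$ and both equal $\sigma_1(a)t$; applying $H$ gives $H(t)\circ_g\tau(a)=\tau(\sigma_1(a))\circ_g H(t)$ in $A_2$. Here every monomial occurring on either side has degree at most $m-1$, so no reduction modulo $g$ takes place and both sides may be expanded inside $K[t;\sigma_2]$; comparing the $K$-coefficients of $t^i$, and using that $K$ is commutative, yields
\[
k_i\bigl(\sigma_2^i(\tau(a))-\tau(\sigma_1(a))\bigr)=0\qquad\text{for all }a\in K\text{ and all }i,
\]
so $k_i=0$ unless $\sigma_2^i\circ\tau=\tau\circ\sigma_1$ on $K$.

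To conclude I would use that $\Gal(K/F)$ is cyclic, hence abelian: the identity $\sigma_2^i\tau=\tau\sigma_1$ rearranges to $\sigma_2^i=\sigma_1=\sigma_2^j$, i.e. $n\mid i-j$. For an index $i$ with $k_i\ne0$ we have $0\le i\le m-1$ and $1<j<n$, so $i-j$ lies in $[-(n-1),\,m-3]$, an interval contained in $(-n,n)$ because $n\ge m-1$; the only multiple of $n$ there is $0$, so $i=j$, and in particular $j\le m-1$. Finally $H$ is injective and $t\ne0$ in $A_1$, so $H(t)\ne0$, whence $k_j\ne0$; since $K$ is a field, $k:=k_j\in K^\times$ and $H(t)=kt^j$, as required. (If $j>m-1$, the same computation forces every $k_i=0$ and hence $H(t)=0$, a contradiction, so no isomorphism $H$ exists in that case and the statement holds vacuously; in particular $m\ge3$ whenever one does.)

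The only real obstacle is bookkeeping rather than anything conceptual. One must check in the second step that on both sides of $H(t)\circ_g\tau(a)=\tau(\sigma_1(a))\circ_g H(t)$ every monomial has degree $<m$, so that $\circ_g$ may legitimately be replaced by ordinary multiplication in $K[t;\sigma_2]$ before coefficients are compared; this is exactly where $\deg H(t)<m$ enters. In the last step one should pin down where the hypothesis $n\ge m-1$ is used (it is what keeps $i-j$ inside $(-n,n)$), handle the boundary case $m-1=n$ in which $i$ could a priori equal $n\equiv0$, and record that the mere existence of $H$ forces $j\le m-1$, since the formula $H(t)=kt^j$ presupposes it.
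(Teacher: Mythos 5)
Your proof is correct and follows essentially the same route as the paper: identify $\tau$ via preservation of the left nucleus, expand $H(t)=\sum k_i t^i$, push $ta=\sigma_1(a)t$ through $H$ to get $k_i(\sigma_2^i\tau(a)-\tau\sigma_1(a))=0$, and use $n\ge m-1$ to force $k_i=0$ for all $i\ne j$. Your final step is in fact slightly more careful than the paper's (which just asserts the conclusion from the order of $\sigma_2$), since you spell out the interval argument for $i-j$ and note that the conclusion $H(t)=kt^j$ tacitly requires $j\le m-1$.
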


\begin{proof}
Suppose that
 $H: K[t;\sigma_1]/K[t;\sigma_1]f \rightarrow K[t;\sigma_1]/K[t;\sigma_1]g$; this is an isomorphism of $F$-vector spaces satisfying $H(uv)=H(u)H(v)$ for all $u,v\in  K[t;\sigma_1]/K[t;\sigma_1]f$. Since $K[t;\sigma_1]/K[t;\sigma_1]f$ and $K[t;\sigma_1]/K[t;\sigma_2]g$ are not
associative, they both have left nucleus $K$. Every isomorphism  preserves the left nucleus, so $H$ restricts to some $\tau \in {\rm Gal}(K/F)$.
 Since the Galois group is cyclic, and thus abelian, $\tau$ commutes with $\sigma_i$, for $i\in \{1,2\}$.
Suppose $H(t) = \sum_{i=0}^{m-1} k_i t^i$ for some $k_i \in K$.
Then
\begin{equation} \label{eqn:automorphism_necessity_theorem 1}
H(tz) = H(t)H(z) = (\sum_{i=0}^{m-1} k_i t^i ) \tau(z) =
\sum_{i=0}^{m-1} k_i \sigma_2^{i}(\tau(z)) t^i \end{equation} and
\begin{equation} \label{eqn:automorphism_necessity_theorem 2} H(tz) =
H(\sigma_1(z)t) = \sum_{i=0}^{m-1} \tau(\sigma_1(z)) k_i t^i
\end{equation} for all $z \in K$.
 Comparing the coefficients of $t^i$
in \eqref{eqn:automorphism_necessity_theorem 1} and
\eqref{eqn:automorphism_necessity_theorem 2} we obtain
\begin{equation} \label{eqn:automorphism_necessity_theorem 3}
k_i \sigma_2^{i}(\tau(z)) = \tau(\sigma_1(z)) k_i
\end{equation} for all $i \in\{ 0, \ldots, m-1\}$ and all $z \in K$. Since  the $\sigma_i$
and $\tau$ commute, we know $ \sigma_2^{i}(\tau(z)) =
\tau(\sigma_2^i(z))$. This implies
$$k_i (\tau\big( \sigma_2^i(z) - \sigma_1(z) \big) )=0$$
for all $i \in\{ 0, \ldots, m-1 \}$ and all $z \in K$, i.e.
\begin{equation} \label{eqn:automorphism_necessity_theorem 4}
k_i=0 \text{ or } \sigma_2^{i}=\sigma_1 \end{equation} for  $i
\in\{ 0, \ldots, m-1\}$. Since $\sigma_1 = \sigma_2^j$ and $\sigma_i$ has order $n\geq m-1$ it follows that $H(t)=kt^j$ for some $k\in K$. Since $H$ is an isomorphism, we know $k\in K^\times$.
\end{proof}

We now prove our First Main Theorem:

\begin{theorem}\label{thm:main1}
Let  $\sigma_1$ and $ \sigma_2$ be any two distinct generators of $\Gal(K/F)$. Let $t^m-a_1\in K[t;\sigma_1]$ and $t^m-a_2\in K[t;\sigma_2]$ be not right-invariant, and suppose that $n \geq m-1$. Then
$$
K[t;\sigma_1]/K[t;\sigma_1](t^m-a_1)\not\cong K[t;\sigma_2]/K[t;\sigma_2](t^m-a_2)
$$
 for any choice of $a_i\in K\smallsetminus F$.
\end{theorem}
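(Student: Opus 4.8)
The plan is to argue by contradiction. Suppose there is an $F$-algebra isomorphism
$$H\colon\ K[t;\sigma_1]/K[t;\sigma_1](t^m-a_1)\ \longrightarrow\ K[t;\sigma_2]/K[t;\sigma_2](t^m-a_2).$$
By Lemma~\ref{le:isomorphisms1}, $H$ restricts on $K$ to some $\tau\in\Gal(K/F)$ and $H(t)=kt^{j}$ for some $k\in K^\times$, where $\sigma_1=\sigma_2^{j}$, $(j,n)=1$, and $2\le j\le m-1$ (so $m\ge 3$; for $m=2$ there is no admissible $j$, hence no such $H$, and the assertion holds trivially). As an algebra homomorphism with $H|_K=\tau$, $H$ is $\tau$-semilinear for the left $K$-module structure ($K={\rm Nuc}_l$ in both algebras), so it carries the $K$-basis $\{t^{i}\}_{i=0}^{m-1}$ of the domain to a $K$-basis of the codomain; and since $t^{i}=t^{\circ i}$ inside each Petit algebra for $i<m$, we get $H(t^{i})=(kt^{j})^{\circ i}$, where $\circ$ denotes the multiplication in $K[t;\sigma_2]/K[t;\sigma_2](t^m-a_2)$.

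I would then extract constraints from the relations among the $t^{i}$. In the skew polynomial ring one has $t^{p}t^{q}=t^{m}$ whenever $p+q=m$, so in the left‑hand algebra $t^{p}\circ t^{q}=t^{m}\ {\rm mod}_r\,(t^m-a_1)=a_1$ for all $p+q=m$ with $1\le p,q\le m-1$; applying $H$,
$$(kt^{j})^{\circ p}\circ(kt^{j})^{\circ q}=H(a_1)=\tau(a_1)\qquad\text{for all }p+q=m,\ 1\le p,q\le m-1.$$
An easy induction shows $(kt^{j})^{\circ i}=c_{i}\,t^{\,r_i}$ is a monomial with $r_i:=ij\bmod m$, $c_0=1$, $c_1=k$, and $c_{i+1}=c_{i}\,\sigma_2^{r_i}(k)$ if $r_i+j<m$ while $c_{i+1}=c_{i}\,\sigma_2^{r_i}(k)\,\sigma_2^{r_{i+1}}(a_2)$ if $r_i+j\ge m$. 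Because $\{(kt^{j})^{\circ i}\}_{i=0}^{m-1}$ is a $K$-basis and the $c_i$ lie in $K^\times$, the exponents $r_0,\dots,r_{m-1}$ exhaust $\{0,\dots,m-1\}$, so $(j,m)=1$; hence $r_p,r_{m-p}\ne 0$ and $r_p+r_{m-p}=m$ for $1\le p\le m-1$, and
$$(kt^{j})^{\circ p}\circ(kt^{j})^{\circ q}=c_{p}\,\sigma_2^{r_p}(c_{m-p})\,t^{m}\ {\rm mod}_r\,(t^m-a_2)=c_{p}\,\sigma_2^{r_p}(c_{m-p})\,a_2 .$$
Thus $c_{p}\,\sigma_2^{r_p}(c_{m-p})$ takes one and the same value $\tau(a_1)a_2^{-1}$ for every $p\in\{1,\dots,m-1\}$.

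It remains to turn this into a contradiction. Expanding $c_{p}\,\sigma_2^{r_p}(c_{m-p})$ by the recursion, it is a product of a ``$k$-part'' (a product of Galois conjugates of $k$) and an ``$a_2$-part'' (a product of Galois conjugates of $a_2$), both governed by the wrap‑around pattern of the progression $0,j,2j,\dots$ modulo $m$. If $n=m$ the $k$-parts coincide for all $p$, each equal to $N_{K/F}(k)$; comparing the $a_2$-parts for suitable pairs $p$ then yields relations $\sigma_2^{s}(a_2)=a_2$ which combine to give $a_2\in F$, contradicting that $t^m-a_2$ is not right‑invariant. For general $n\ge m-1$ the $k$-parts vary with $p$ (the reductions modulo $m$ coming from $t^m=a_2$ now interact with reductions modulo $n$ coming from the order of $\sigma_2$); comparing $c_{p}\,\sigma_2^{r_p}(c_{m-p})$ for suitable $p$ and cancelling common factors then yields a ``Hilbert~$90$''-type relation $\sigma_2^{d}(k)\,\sigma_2^{e}(a_2)=k\,a_2$, and one checks that this relation makes $\tau(a_1)=(kt^{j})^{\circ m}=c_{m-1}\,\sigma_2^{m-j}(k)\,a_2$ invariant under $\sigma_2$, whence $\tau(a_1)\in F$ and so $a_1\in F$ — again impossible. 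The main obstacle is exactly this last step: carrying out, uniformly in $j$ (with $(j,m)=(j,n)=1$) and in $n\ge m-1$, the book‑keeping of which conjugates of $k$ and of $a_2$ occur in each $c_p$, and verifying that comparing the expressions $c_{p}\,\sigma_2^{r_p}(c_{m-p})$ always forces $a_2\in F$ or $a_1\in F$. The sub‑case $n=m$ recovers \cite[Theorem~5.1]{NP}.
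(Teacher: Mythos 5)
Your setup is sound and agrees with the paper's up to the point where constraints must be turned into a contradiction: you correctly invoke Lemma~\ref{le:isomorphisms1} to get $H|_K=\tau$ and $H(t)=kt^j$, and the computation of $H(t^i)=c_i t^{r_i}$ with the stated recursion for $c_i$ is right. But the proof is not complete. Everything after ``It remains to turn this into a contradiction'' is a programme, not an argument: the claim that comparing $c_p\,\sigma_2^{r_p}(c_{m-p})$ for ``suitable pairs $p$'' forces $a_2\in F$ (for $n=m$), and that for general $n\ge m-1$ one obtains a ``Hilbert 90-type relation'' which then forces $\tau(a_1)\in F$, is never verified. This is precisely the hard part: for $n\neq m$ the $k$-parts of $c_p\,\sigma_2^{r_p}(c_{m-p})$ genuinely differ with $p$ (because $\sigma_2^{r_p+r_i}\neq\sigma_2^{r_{p+i}}$ when $r_p+r_i\ge m$ and $n\nmid m$), so the cancellation you need does not happen for free, and you yourself flag this bookkeeping as ``the main obstacle.'' As it stands the argument establishes necessary conditions on $k,a_1,a_2$ but does not derive a contradiction from them.

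The paper closes the argument by a different and much more economical choice of relation. Instead of using all the products $t^p\circ t^q$ with $p+q=m$, it uses a single commutation relation: let $\ell$ be the least positive integer with $\ell j>m$ (so $1\le\ell<m$), and compare $H(t)\circ H(t^\ell)$ with $H(t^\ell)\circ H(t)$, which must agree because $t\cdot t^\ell=t^\ell\cdot t$ in the domain. The point of this choice is that the $k$-parts on the two sides are \emph{identical} (both equal $\gamma_{\ell+1}(k)=k\sigma_2^{j}(k)\cdots\sigma_2^{\ell j}(k)$), so no bookkeeping is needed; the comparison collapses to $\sigma_2^{\ell j-m}(a_2)=\sigma_2^{\ell j-m+j}(a_2)$, i.e. $\sigma_1(a_2)=a_2$, whence $a_2\in\mathrm{Fix}(\sigma_1)=F$, contradicting the hypothesis that $t^m-a_2$ is not right-invariant. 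If you want to salvage your approach, replacing the family of relations $t^p\circ t^{m-p}=a_1$ by this one commutativity relation is the missing idea.
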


\begin{proof}
Since $\sigma_1,\sigma_2$ both generate $\Gal(K/F)$, there is some $1<j<n$, with $(j,n)=1$, such that $\sigma_1 = \sigma_2^j$.
Let $\ell$ be the least positive integer such that $\ell j>m$; since $\sigma_1\neq \sigma_2$, $\ell<m$.
Suppose  that
 $H: K[t;\sigma_1]/K[t;\sigma_1](t^m-a_1) \rightarrow K[t;\sigma_2]/K[t;\sigma_2](t^m-a_2)$ is an $F$-algebra isomorphism.
 Then $H|_K=\tau\in {\rm Gal}(K/F)$ and $H(t)=kt^j$ for some $k\in K^\times$ by Lemma \ref{le:isomorphisms1}.
Since the Galois group is cyclic, and thus abelian,  $\tau$ commutes with $\sigma_i$ on $K$, for $i\in \{1,2\}$.
For any $s\geq 1$, put $\gamma_s(k) = k\sigma_2^j(k)\sigma_2^{2j}(k)\cdots \sigma_2^{(s-1)j}(k)$.  Then one can show by induction that
$$
H(t^s) =
\begin{cases}
\gamma_s(k)t^{sj} &\text{if $1\leq s<\ell$;}\\
\gamma_\ell(k)\sigma^{\ell j - m}(a_2)t^{\ell j - m} &\text{if $s=\ell$.}
\end{cases}
$$
Since $1\leq \ell<m$,  we have
$$
t \cdot t^\ell = t^\ell \cdot t =
\begin{cases}
t^{\ell+1} & \text{if $\ell+1<m$};\\
a_1 & \text{if $\ell+1=m$.}
\end{cases}
$$
However, we have
$$
H(t^\ell)H(t) =  \gamma_{\ell}(k) \sigma^{\ell j - m}(a_2) t^{\ell j-m}\cdot kt^j  $$
so that
$$
H(t^\ell)H(t) =
\begin{cases}
\gamma_{\ell+1}(k)  \sigma^{\ell j - m}(a_2)   t^{\ell j + j-m} & \text{if $(\ell+1)j<2m$;}\\
\gamma_{\ell+1}(k)   \sigma^{\ell j - m}(a_2)     \sigma^{{\ell j + j-2m}}(a_2)  t^{\ell j + j-2m} &     \text{if $2m\leq (\ell+1)j$}
\end{cases}
$$
whereas
$$
H(t)H(t^\ell) = kt^j \cdot \gamma_{\ell}(k)  \sigma_2^{\ell j - m} (a_2) t^{\ell j-m},$$
so that
$$
H(t)H(t^\ell) =
\begin{cases}
\gamma_{\ell+1}(k)   \sigma_2^{\ell j - m+j} (a_2)   t^{\ell j + j-m} & \text{if $(\ell+1)j<2m$;}\\
\gamma_{\ell+1}(k)   \sigma_2^{\ell j - m+j} (a_2)  \sigma_2^{\ell j - 2m+j} (a_2)  t^{\ell j + j-2m} & \text{if $2m\leq (\ell+1)j$}.
\end{cases}
$$
Hence $H$ being well-defined implies that
$$ \sigma_2^{\ell j - m}(a_2)  =   \sigma_2^{\ell j - m+j} (a_2)   $$
i.e. that $ a_2  =   \sigma_2^{j} (a_2)  =\sigma_1(a_2) $.
It follows that $H$ is well-defined only if $\sigma_1(a_2)=a_2$, i.e. if $a_2\in {\rm Fix}(\sigma_2)=F$. But this is impossible: $\sigma_1$ generates the cyclic Galois group, and $a_2\notin F$ by assumption.
\end{proof}

\begin{corollary}\label{cor:Euler}
Let   $n \geq m-1$.  Let $\varphi$ be the Euler function. Then there are $\varphi(n)$ different classes of proper Petit algebras of type $K[t;\sigma_i]/K[t;\sigma_i](t^m-a)$; one class for each choice of generator $\sigma_i$ of $\Gal(K/F)$. Algebras in two different classes are not isomorphic.
\end{corollary}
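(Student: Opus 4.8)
The plan is to read this off directly from Theorem \ref{thm:main1}. First I would note that, $\Gal(K/F)$ being cyclic of order $n$, it has exactly $\varphi(n)$ generators: writing $\Gal(K/F) = \langle \sigma \rangle$, these are the powers $\sigma^j$ with $1 \le j \le n$ and $\gcd(j,n) = 1$. To each generator $\sigma_i$ I would attach the collection
$$
\mathcal{C}_{\sigma_i} = \{\, K[t;\sigma_i]/K[t;\sigma_i](t^m - a) \;:\; a \in K \smallsetminus F \,\}
$$
of Petit algebras. Each $\mathcal{C}_{\sigma_i}$ is nonempty, since $n > 2$ forces $K \neq F$; and by the discussion in Section \ref{subsec:nonass} every member of $\mathcal{C}_{\sigma_i}$ is a \emph{proper} Petit algebra, because $t^m - a \in K[t;\sigma_i]$ fails to be right-invariant precisely when $a \in K \smallsetminus F$.

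Next I would apply Theorem \ref{thm:main1} pairwise: for any two distinct generators $\sigma_1, \sigma_2$ of $\Gal(K/F)$, and using $n \ge m-1$, no member of $\mathcal{C}_{\sigma_1}$ is isomorphic to any member of $\mathcal{C}_{\sigma_2}$. Consequently the $\varphi(n)$ collections $\mathcal{C}_{\sigma_i}$ are mutually disjoint up to isomorphism, which is exactly the assertion of the corollary: there are $\varphi(n)$ classes, one per choice of generator $\sigma_i$ of $\Gal(K/F)$, and two algebras lying in different classes are never isomorphic.

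I do not expect any genuine obstacle here; the only points worth spelling out are that each class is nonempty and consists of proper Petit algebras—so that the hypotheses of Theorem \ref{thm:main1} are in force—and that distinct generators index genuinely distinct classes, which is immediate from the pairwise application of that theorem rather than requiring a finer argument. If one wished to be slightly more thorough, one could add that by Lemma \ref{le:isomorphisms1} any isomorphism between algebras of this shape forces $\sigma_1 = \sigma_2^j$ for the relevant $j$, so the generator is essentially recoverable from the algebra up to this ambiguity, underscoring that assigning to each such algebra its defining class is well posed.
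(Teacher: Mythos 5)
Your proposal is correct and matches the paper's (implicit) argument: the corollary is stated as an immediate consequence of Theorem \ref{thm:main1}, obtained exactly as you do by counting the $\varphi(n)$ generators of the cyclic group $\Gal(K/F)$ and applying the theorem pairwise to distinct generators. The only points you add — nonemptiness of each class and properness via $a\in K\smallsetminus F$ — are exactly the right hypotheses to check, so nothing is missing.
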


The assumption that $n \geq m-1$
in Lemma \ref{le:isomorphisms1} is indeed needed in the proof for (\ref{eqn:automorphism_necessity_theorem 3}). If $n < m-1$ we obtain a weaker version:

\begin{lemma}   \label{le:isomorphismsn < m-1} 
  Let  $F$ be an arbitrary field and let $n\geq 3$ be the degree of a  cyclic Galois extension $K/F$.  Let  $\sigma_1$ and $ \sigma_2$ be any two  distinct generators of the Galois group $\Gal(K/F)$, i.e. there is some $1<j<n$, with $(j,n)=1$, such that $\sigma_1 = \sigma_2^j$. Let $f\in K[t;\sigma_1]$ and $g\in K[t;\sigma_2]$ be both not right-invariant of degree $m$, $n < m-1$. Let
 $$H: K[t;\sigma_1]/K[t;\sigma_1]f \rightarrow K[t;\sigma_2]/K[t;\sigma_2]g$$
  be an $F$-algebra isomorphism between the two proper nonassociative algebras $K[t;\sigma_1]/K[t;\sigma_1]f$ and $K[t;\sigma_2]/K[t;\sigma_2]g$. Then
   $$
  H(t) = k_jt^j + k_{n+j}t^{n+j} + \ldots + k_{sn+j} t^{sn+j}
  $$
  for the largest integer  $s$, such that $sn+j<m-1$  and $0\leq l\leq s$, where $k_{ln+j}\in K$. Thus
 $$H(\sum_{i=0}^{m-1} x_i t^i )=(\sum_{i=0}^{m-1} \tau(x_i) h(t)^i ) {\rm mod}_r g.$$
 Moreover, for
 $$h(t) = k_jt^j + k_{n+j}t^{n+1+j} + \ldots + k_{sn+j} t^{sn+j}$$
  the $i$th power $h(t)^i$ is well defined for all $i\leq m-1$, i.e., all $i$th powers of $h(t)$ are power-associative for
$i\leq m-1$, and
 $h(t)h(t)^{m-1} = \sum_{i=0}^{m-1}\tau(a_i)h(t)^i {\rm mod}_r g.$
\end{lemma}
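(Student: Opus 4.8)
The plan is to adapt the proof of Lemma~\ref{le:isomorphisms1} almost verbatim; the only difference is that when $n<m-1$ the congruence governing which coefficients of $H(t)$ may be nonzero has several solutions in the relevant range instead of just one, so we record all of them rather than collapsing to $H(t)=kt^j$. First I would recover $\tau$. Since $f$ and $g$ are not right-invariant, both $K[t;\sigma_1]/K[t;\sigma_1]f$ and $K[t;\sigma_2]/K[t;\sigma_2]g$ are proper nonassociative Petit algebras, hence each has left nucleus $K$ by the results of Petit recalled in Section~\ref{subsec:nonass}. An algebra isomorphism preserves the left nucleus, so $H(K)=K$ and $H|_K$ is an $F$-automorphism of $K$, i.e.\ $H|_K=\tau\in\Gal(K/F)$; since $\Gal(K/F)$ is cyclic, hence abelian, $\tau$ commutes with $\sigma_1$ and $\sigma_2$.

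Next I would pin down the shape of $H(t)$. Write $H(t)=\sum_{i=0}^{m-1}k_it^i$ with $k_i\in K$, and observe that for $z\in K$ both $H(t)\tau(z)$ and $\tau(\sigma_1(z))H(t)$ have degree at most $m-1<m$, hence are computed by ordinary multiplication in $K[t;\sigma_2]$ with no reduction modulo $g$. From $H(t)H(z)=H(tz)=H(\sigma_1(z)t)=H(\sigma_1(z))H(t)$ and comparison of the coefficients of $t^i$ one gets $k_i\,\sigma_2^i(\tau(z))=\tau(\sigma_1(z))\,k_i$ for all $z\in K$ and all $0\le i\le m-1$; using that $\tau,\sigma_1,\sigma_2$ commute this rearranges to $k_i\,\tau\!\big(\sigma_2^i(z)-\sigma_1(z)\big)=0$, whence, $K$ being a field, $k_i=0$ unless $\sigma_2^i=\sigma_1=\sigma_2^j$, i.e.\ unless $i\equiv j\pmod n$. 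Intersecting the set of such $i$ with $\{0,1,\dots,m-1\}$ yields precisely $i\in\{j,\,n+j,\,\dots,\,sn+j\}$ with $s$ the largest admissible integer, which is the asserted form of $H(t)$; and $H$ being bijective forces at least one $k_{\ell n+j}$ to be nonzero.

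Finally I would read off the remaining assertions. Because $H$ is an algebra homomorphism restricting to $\tau$ on $K$, linearity gives $H(\sum_{i=0}^{m-1}x_it^i)=\sum_{i=0}^{m-1}\tau(x_i)H(t^i)$, and $H(t^i)$ equals any bracketing of $i$ copies of $h(t):=H(t)$. For $i\le m-1$ the element $t^i\in K[t;\sigma_1]/K[t;\sigma_1]f$ is bracketing-independent, since every partial product $t^a\circ t^b$ that can arise has $a+b\le m-1<m$ and hence equals $t^{a+b}$ with no reduction; therefore $H(t^i)$ is bracketing-independent too, which is exactly the claim that $h(t)^i$ is power-associative for $i\le m-1$, and it identifies $H$ with $\sum x_it^i\mapsto\big(\sum\tau(x_i)h(t)^i\big)\ {\rm mod}_r\,g$. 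Applying $H$ to the identity $t\circ t^{m-1}=t^m\ {\rm mod}_r\,f=\sum_{i=0}^{m-1}a_it^i$ (where $f=t^m-\sum_{i<m}a_it^i$) then turns the left-hand side into $h(t)\,h(t)^{m-1}\ {\rm mod}_r\,g$ and the right-hand side into $\sum_{i=0}^{m-1}\tau(a_i)h(t)^i$, giving the stated power relation. I do not expect a real obstacle: the argument is essentially that of Lemma~\ref{le:isomorphisms1}, and the only point demanding care is the bookkeeping that keeps every product invoked in the coefficient comparison and in the power-associativity step inside the degree range $<m$, where Petit multiplication coincides with multiplication in the twisted polynomial ring, so no hidden reduction modulo $f$ or $g$ interferes.
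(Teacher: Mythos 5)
Your proposal is correct and follows essentially the same route as the paper's proof: recover $\tau$ from preservation of the left nucleus, compare coefficients in $H(tz)=H(\sigma_1(z)t)$ to force $k_i=0$ unless $i\equiv j\pmod n$, and then deduce power-associativity of $h(t)^i$ for $i\leq m-1$ from the bracketing-independence of $t^i$ in $S_f$ together with the homomorphism property, finally applying $H$ to $t\cdot t^{m-1}=\sum_i a_it^i$. Your explicit remarks that the degree bounds keep all intermediate products below $m$ (so no reduction modulo $f$ or $g$ interferes) merely spell out what the paper leaves implicit.
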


\begin{proof}
Again, $H$ restricts to some $\tau \in {\rm Gal}(K/F)$ and if we let
$H(t) = \sum_{i=0}^{m-1} k_i t^i$ for some $k_i \in K$ then we obtain (\ref{eqn:automorphism_necessity_theorem 4})
for all $i \in\{ 0, \ldots, m-1\}$ and all $z \in K$, i.e. $k_i=0$ or $\sigma_2^{i}=\sigma_2^j$ for  $i
\in\{ 0, \ldots, m-1\}$, that is $k_i=0$ or $\sigma_2^{i-j}=\sigma_2$ for  $i
\in\{ 0, \ldots, m-1\}$. Now $\sigma_2^{i-j} = \sigma_2$  if and only if
 $i-j = nl$ for some $l \in \mathbb{Z}$, since $\sigma_2$ has order $n<m-1$.
  Therefore \eqref{eqn:automorphism_necessity_theorem 4} implies $k_i = 0$
 for every $i \neq  nl+j$, $l \in \mathbb{N}_0$, $i \in  \{ 0, \ldots, m-1\}$.
  Thus
$$H(t) = k_jt^j + k_{n+j}t^{n+1+j} + \ldots + k_{sn+j} t^{sn+j}$$ for
some integer $s$, $sn+j<m-1$. Put $h(t)=k_jt^j + k_{n+j}t^{n+j} + \ldots + k_{sn+j} t^{sn+j}$.

Now $H(t^i)=H(t)^i = g(t)^i  {\rm mod}_r g$ is well defined for all $i \in  \{ 0, \ldots, m-1\}$, since so is $t^i$, and
$$H(t)H(t^{m-1})=h(t)h(t)^{m-1} {\rm mod}_r g= H(t^m)=\sum_{i=0}^{m-1}\tau(a_i)h(t)^i {\rm mod}_r g.$$
\end{proof}

\section{Counting the non-isotopic nonassociative cyclic algebras of prime degree} \label{sec:semifields}

Two  semifields $(\mathbb{F}_{q^m}/\mathbb{F}_{q},\sigma,a)$ and $(\mathbb{F}_{q^m}/\mathbb{F}_{q},\sigma,b)$ are isomorphic if and only if they are isotopic \cite[Theorem 9]{San}. The proof of  \cite[Theorem 9]{San} can be generalized and yields our Second Main Theorem:

\begin{theorem}\label{thm:important}
Let $K/F$  be a cyclic Galois field extension, $m\geq 3$, and $\sigma_1$, $\sigma_2$ two generators of ${\rm Gal}(K/F)$.
Let  $(K/F,\sigma_1,a_1)$ and
 $(K/F,\sigma_2,a_2)$ be two proper nonassociative  algebras.  If  $(K/F,\sigma_1,a_1)$ and
 $(K/F,\sigma_2,a_2)$ are isotopic, then $\sigma_1=\sigma_2$.
\end{theorem}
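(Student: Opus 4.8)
The plan is to mimic the structure of Sandler's argument for \cite[Theorem 9]{San} (the case $\sigma_1=\sigma_2$), tracking carefully how the extra freedom of allowing distinct generators interacts with an isotopy. First I would unwind the definition of isotopy: an isotopy $(f,g,h)$ between $A_1=(K/F,\sigma_1,a_1)$ and $A_2=(K/F,\sigma_2,a_2)$ satisfies $x\,A_1\,y = h\big(f(x)\,A_2\,g(y)\big)$ for all $x,y$. Evaluating at $x=y=1$ gives a relation among $h$, $f(1)$, $g(1)$; setting first $x=1$ and then $y=1$ lets me eliminate $h$ and reduce to a principal isotopy, i.e.\ I may assume after composing with translations in $A_2$ that $h=\mathrm{id}$ and that $f,g$ each fix $1$, so $x\,A_1\,y = f(x)\,A_2\,g(y)$ with $f(1)=g(1)=1$.

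Next I would exploit the nucleus. By Theorem \ref{thm:nuc} (and the discussion following it), since both algebras are proper nonassociative cyclic algebras with $m$ prime, $\mathrm{Nuc}_l(A_i)=\mathrm{Nuc}_m(A_i)=K$. The key point is that left/middle nucleus behaviour is rigid under isotopy in a way one can pin down: plugging $x\in K$ into $x\,A_1\,y=f(x)A_2 g(y)$ and using that left multiplication by $x$ in $A_1$ is $K$-linear on the left, one deduces that $f$ maps $K$ to $K$ and in fact $f|_K$ is (up to the scalar $g(1)$ normalization already made) a ring homomorphism $K\to K$, hence an element of $\mathrm{Gal}(K/F)$; similarly analysing the middle nucleus forces $g|_K$ to be a power of the same automorphism. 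Then I would feed in $x=y=t$: the left-hand side $t\,A_1\,t$ is computed inside $K[t;\sigma_1]/(t^m-a_1)$ and equals $t^2$ (as $m\geq 3$), while the right-hand side $f(t)\,A_2\,g(t)$ is a product in $K[t;\sigma_2]/(t^m-a_2)$. Comparing the way $t$ conjugates elements of $K$ on the two sides — i.e.\ comparing $tz=\sigma_1(z)t$ in $R/Rf$ with the corresponding twist $\sigma_2$ on the other side, after transporting through $f$ and $g$ — produces an identity of the form $\sigma_1(z)=\sigma_2(z)$ for all $z\in K$, forcing $\sigma_1=\sigma_2$.

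More concretely, the engine is the associativity-type constraint already used in the proof of Theorem \ref{thm:main1}: from $t(t^\ell)=(t^\ell)t$-type relations in $A_1$ transported by the isotopy to $A_2$, one extracts that the twisting automorphism seen through $f$ must agree with the twisting automorphism seen through $g$, and both must match $\sigma_1$ resp.\ $\sigma_2$; the only way these collapse consistently is $\sigma_1=\sigma_2$. I would organize this as: (1) reduce to a principal isotopy fixing $1$; (2) show $f,g$ restrict to powers of a common generator on $K$ using the nucleus; (3) write $f(t)=\sum k_i t^i$, $g(t)=\sum \ell_i t^i$ and use the $F$-bilinearity identity on pairs $(z,1)$, $(1,z)$, $(z,w)$ with $z,w\in K$ to force $f(t)=kt^p$, $g(t)=\ell t^q$ for suitable exponents coprime to $m$ (here primality of $m$ is used, exactly as in Lemma \ref{le:isomorphisms1}); (4) compare $t\,A_1\,t^{m-1}$ computed two ways to land on $\sigma_1=\sigma_2$.

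The main obstacle I anticipate is step (3)–(4): an isotopy is genuinely weaker than an isomorphism, so $f$ and $g$ need not agree, and one cannot directly invoke Lemma \ref{le:isomorphisms1}. The delicate part is to show that even with two \emph{independent} maps $f,g$, the bilinear identity $x\,A_1\,y=f(x)A_2 g(y)$ is rigid enough to force $f(t)$ and $g(t)$ to be monomials $kt^p$, $\ell t^q$ and then, via the requirement that multiplication by $t$ on either side induces the respective Galois generators, to force $p\equiv q$ and ultimately $\sigma_2^{p}=\sigma_1$ together with $\sigma_2^{q}=\sigma_1$ in a way that pins $\sigma_1=\sigma_2$ rather than merely $\sigma_1$ a power of $\sigma_2$. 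I expect this to require care about which powers of $t$ can appear in $f(t)$ and $g(t)$ (the argument of Lemma \ref{le:isomorphismsn < m-1} shows extra powers can appear when $n<m-1$, but here $n=m$ is prime so the congruence $\sigma_2^{i}=\sigma_1$ has a unique solution mod $m$, which is what saves us), and about correctly handling the reduction $\mathrm{mod}_r(t^m-a_2)$ when degrees exceed $m-1$.
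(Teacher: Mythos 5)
Your overall strategy (follow Sandler's autotopism analysis, use the left/middle nucleus to constrain the isotopy maps, then compare twisted products) is the same route the paper takes, but your step (1) contains a fatal over-normalization. You claim that after composing with translations you may assume $h=\mathrm{id}$ \emph{and} $f(1)=g(1)=1$. These conditions are jointly inconsistent for any nontrivial isotopy of unital algebras: with $h=\mathrm{id}$, setting $y=1$ in $x\,A_1\,y=f(x)\,A_2\,g(y)$ gives $x=f(x)\,A_2\,1=f(x)$, so $f=\mathrm{id}$, and setting $x=1$ then gives $g=\mathrm{id}$, i.e.\ $A_1=A_2$ on the nose. (The correct statement is that a unital algebra isotopic to a division algebra $A_2$ is isomorphic to a principal isotope $A_2^{(R_b^{-1},L_a^{-1})}$, whose unit is $ab$, not $1$.) So the hypothesis you work under in steps (2)--(4) is never satisfied by a genuine isotopy, and the argument built on it proves nothing. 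The element you normalized away is exactly the object that carries all the content: in the paper's (Sandler's) setup the isotopy yields $PL_{Q(z)}=L_zPL_{Q(1)}$, and the factor $L_{Q(1)}$ must be kept and tracked; it is what produces the cocycle relation $h_k=h_1\sigma_2(h_1)\cdots\sigma_2^{k-1}(h_1)$ and ultimately the norm condition $V(a_1)=a_2N_{K/F}(h)$.

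Two further points. First, the paper does not try to read off $\sigma_1=\sigma_2$ directly from the transported multiplication table (your step (4), which you correctly identify as the delicate part and do not actually carry out); instead it shows that the isotopy component $P$ is forced to be block-diagonal of the form $\mathrm{diag}(VR(h_1),\dots,VR(h_m))$ and that the resulting norm condition makes $P$ an honest algebra \emph{isomorphism} by Theorem \ref{T:classifyisomorphism}, at which point the already-established non-isomorphism result for distinct generators (Theorem \ref{thm:main1} with $n=m$) finishes the proof. That reduction is the missing idea in your plan. Second, you repeatedly assume $m$ is prime; the theorem only assumes $m\geq 3$, and primality is not needed (the exponent $j$ with $\sigma_1=\sigma_2^j$ is unique modulo $m$ simply because $\sigma_2$ generates a cyclic group of order $m$).
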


We break up the proof of this result and start with a generalization of \cite[Theorem 4]{San} which does not require that the algebras are division algebras:

\begin{theorem} \label{thm:nuc}
(i) Let $(Q,P,U)$ be an isotopy between two proper nonassociative algebras $A_1$ and $A_2$ over $F$ with middle nucleus $K$. Then for every
$a\in {\rm Nuc}_m(A_2)=K$ there is $c\in {\rm Nuc}_m(A_1)=K$ such that
$$PL_a=L_cP.$$
(ii) Let $K/F$ be a Galois field extension of degree $n\geq 3$, and $\sigma_1$, $\sigma_2$ two generators of ${\rm Gal}(K/F)$.
Suppose that $n \geq m$. Let $(Q,P,U)$ be an isotopy between two proper nonassociative algebras $K[t;\sigma_1]/K[t;\sigma_1](t^m-a_1)$ and
$K[t;\sigma_2]/K[t;\sigma_2](t^m-a_2)$. Then there exists some $V\in {\rm Aut}(K)$ and some $h_i\in K^\times$ such that
\[ P=\left [\begin {array}{cccccc}
VR(h_1) &   0       &  ...& 0 \\
    0    & VR(h_2)  & 0 & ... \\
    0    &   0       &   VR(h_3) & ...  \\
...    & ...     & ...        & ...\\
    ...    &   ...      & ...    & 0  \\
   0     &     ...    & 0   & VR(h_m)\\
\end {array}\right ].
 \]
\end{theorem}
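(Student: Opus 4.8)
The theorem has two parts — a general statement (i) about isotopies of algebras with a common middle nucleus, and its specialization (ii) — and I would prove them in that order.

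For (i) the plan is to rephrase the isotopy in terms of multiplication operators. Writing the isotopy $(Q,P,U)$ so that $x\cdot_{A_2}y=U\bigl(Q(x)\cdot_{A_1}P(y)\bigr)$, one reads off $L^{A_2}_x=U\,L^{A_1}_{Q(x)}\,P$, and evaluating at $x=1_{A_2}$ gives the normalization $PU=(L^{A_1}_{q_0})^{-1}$ with $q_0:=Q(1_{A_2})$. Fix $a\in\mathrm{Nuc}_m(A_2)=K$ and set $T:=(L^{A_1}_{q_0})^{-1}L^{A_1}_{Q(a)}$; a one-line substitution gives $PL^{A_2}_a=TP$, so it is enough to show $T=L^{A_1}_c$ for some $c\in\mathrm{Nuc}_m(A_1)$. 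Pushing the defining middle-nucleus identity $L^{A_2}_{ua}=L^{A_2}_uL^{A_2}_a$ of $A_2$ through the formula for $L^{A_2}$ and cancelling $U$ on the left and $P$ on the right yields $L^{A_1}_{Q(ua)}=L^{A_1}_{Q(u)}T$ for all $u$. Evaluating both sides at $1_{A_1}$ shows $Q(ua)=Q(u)\cdot_{A_1}d$ with $d:=T(1_{A_1})$; taking $u$ with $Q(u)=1_{A_1}$ gives $T=L^{A_1}_d$; and substituting back gives $L^{A_1}_{w\cdot_{A_1}d}=L^{A_1}_wL^{A_1}_d$ for all $w$, i.e. $d\in\mathrm{Nuc}_m(A_1)$. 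So $c:=d$ works. The only delicate point is this last step — identifying the operator $T$ as left multiplication by a \emph{genuine} middle-nucleus element of $A_1$ — and it is precisely there that one uses that $A_1$ also has middle nucleus $K$.

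For (ii), apply (i) to $A_i=K[t;\sigma_i]/K[t;\sigma_i](t^m-a_i)$, which are proper with $\mathrm{Nuc}_l(A_i)=\mathrm{Nuc}_m(A_i)=K$ by the Preliminaries: for each $a\in K$ we get $c(a)\in K$ with $PL^{A_2}_a=L^{A_1}_{c(a)}P$. Since $P$ is bijective and $\kappa\mapsto L^{A_1}_\kappa$ is injective on $K$ (evaluate at $1$), $c$ is well defined; from $L^{A_2}_{ab}=L^{A_2}_aL^{A_2}_b$ for $a,b\in K$ (as $K\subseteq\mathrm{Nuc}_m(A_2)$), additivity, $c(1)=1$, and $F$-linearity of the isotopy, $c$ is an $F$-algebra endomorphism of the field $K$, hence an automorphism $V\in\Gal(K/F)$. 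Now decompose the common underlying space as $\bigoplus_{\ell=0}^{m-1}Kt^\ell$ and write $P=(P_{\ell k})_{0\le\ell,k\le m-1}$ blockwise, each $P_{\ell k}$ an $F$-linear endomorphism of $K$. Because $K$ is the middle nucleus of each $A_i$, left multiplication by $a\in K$ acts on every summand $Kt^\ell$ as plain multiplication by $a$; hence $PL^{A_2}_a=L^{A_1}_{V(a)}P$ forces $P_{\ell k}(ax)=V(a)P_{\ell k}(x)$ for all $a,x\in K$ and all $\ell,k$, so setting $h_{\ell k}:=P_{\ell k}(1)$ each block has the form $P_{\ell k}=V\circ R(g_{\ell k})$ for a suitable $g_{\ell k}\in K$. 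In particular, all blocks of $P$ are semilinear with one and the same twist $V$.

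It remains to show $P_{\ell k}=0$ for $\ell\neq k$, and this is the crux, the point where the hypothesis $n\geq m$ is used; the leverage is the uniformity of $V$. I would bring in a second family of intertwining relations in which the automorphisms $\sigma_1^{0},\dots,\sigma_1^{m-1}$ — pairwise distinct precisely because $[K:F]=n\geq m$ — appear explicitly: applying (i) to the opposite algebras $A_1^{\mathrm{op}},A_2^{\mathrm{op}}$ (whose middle nucleus is again $K$, and between which $(P,Q,U)$ is an isotopy) gives $QR^{A_2}_a=R^{A_1}_{W(a)}Q$ with $W\in\Gal(K/F)$; combined with the normalizations attached to the units $1_{A_1},1_{A_2}$, which express $Q$ and $U$ through $P$ and multiplication by $Q(1),P(1)$, and with the fact that right multiplication by $a\in K$ acts on $Kt^\ell$ in $A_i$ as multiplication by $\sigma_i^\ell(a)$, this pins every nonzero entry $(\ell,k)$ of $P$ down to a relation of the shape $V\sigma_2^{k}=\sigma_1^{\ell}W$; distinctness of $\sigma_1^{0},\dots,\sigma_1^{m-1}$ then forces at most one nonzero block per column, invertibility of $P$ forces exactly one, and a further comparison (again using $n\geq m$) excludes a nontrivial permutation, leaving $P$ block-diagonal. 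Finally $h_\ell\in K^\times$ since $P$ and $V$ are bijective. I expect this block-diagonalization to be the main obstacle: the preceding two paragraphs are essentially formal, whereas here one must orchestrate $V$, $W$, $\sigma_1$, $\sigma_2$ and the unit normalizations so that the bound $n\geq m$ kills every off-diagonal block.
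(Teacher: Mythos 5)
Your part (i) is correct and is essentially the argument the paper imports from Sandler: normalize via $PU=(L^{A_1}_{Q(1)})^{-1}$, identify $T=(L^{A_1}_{Q(1)})^{-1}L^{A_1}_{Q(a)}$ as $L^{A_1}_d$ with $d\in{\rm Nuc}_m(A_1)$, done. The first half of your (ii) is also fine: $a\mapsto c(a)$ is an $F$-algebra automorphism $V$ of $K$, and with coefficients written on the left each block $P_{\ell k}$ of $P$ is $V$-semilinear. (The paper's intermediate matrix shows twists $\sigma_2^{j-i}V$ on the off-diagonal blocks because Sandler's coordinates put the coefficients on the other side of $t^i$; this is a convention difference, not a disagreement.)

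The gap is exactly where you predicted it: the block-diagonalization. The relation you propose to import from the opposite algebras, $QR^{A_2}_a=R^{A_1}_{W(a)}Q$, constrains the blocks of $Q$ (each $Q_{\ell k}$ becomes $\sigma_1^{\ell}W\sigma_2^{-k}$-semilinear), not the blocks of $P$; your claimed conclusion ``every nonzero entry $(\ell,k)$ of $P$ satisfies $V\sigma_2^{k}=\sigma_1^{\ell}W$'' silently transfers a property of $Q$ to $P$. To make the transfer you must pass through the normalization $R^{A_1}_{P(1)}\,Q=L^{A_1}_{Q(1)}\,P$, and $L^{A_1}_{Q(1)}$, $R^{A_1}_{P(1)}$ are multiplications by \emph{arbitrary} elements $Q(1)=\sum q_it^i$, $P(1)=\sum p_jt^j$ of $A_1$: these operators shift and mix the summands $Kt^{\ell}$, so the comparison is no longer blockwise and the clean relation you want does not drop out. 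One can in principle push this through by expanding $L_{Q(1)}=\sum_i L_{q_i}L_{t^i}$ and $R_{P(1)}=\sum_j R_{t^j}R_{p_j}$ and invoking linear independence of semilinear maps with distinct twists, but that computation is the entire content of the step and is absent. The paper (following Sandler) avoids this by using a different identity altogether, namely $PL_{Q(t)}=L_tPL_{Q(1)}$ with $z=t$: left multiplication by the generator $t$ is the twisted shift, and comparing its matrix against the $V$-semilinear block form of $P$ produces the system of equations that forces one nonzero block per row and column on a transversal, which is then collapsed to the diagonal. Your proposal needs either that identity or the full expansion described above; as written, part (ii) is not proved.
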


\begin{proof}
As in \cite{San} we can show that for any $a\in {\rm Nuc}_m(A_2)=K$ there is $c\in {\rm Nuc}_m(A_1)=K$ such that
$PL_a=L_cP$ where $L_c$ is the left multiplication in $A_1$ and $L_a$ the one in $A_2$.
\\ (ii) We continue as in \cite[p.~244]{San}, using (i) to compute $P$ to be
\[ P=\left [\begin {array}{cccccc}
VR(h_{11}) &   \sigma_2 VR(h_{12})       & ... & \sigma_2^{m-1}VR(h_{1m})\\
  \sigma_2^{-1} VR(h_{21})       & VR(h_{22})  & ... &    \sigma_2^{m-2} VR(h_{2m})\\
  \sigma_2^{-2} VR(h_{31})       &          &   VR(h_{33}) &   \\
...     & ...     & ...        & ...\\
        &         & ...    &    \\
   \sigma_2^{-m+1} VR(h_{m1})      &     \sigma_2^{-m+2} VR(h_{m2})     & ...    & VR(h_{mm})\\
\end {array}\right ]
 \]
for some $V\in {\rm Aut}(K)$  where $R(h_{ij})$ is right multiplication with  $h_{ij}\in K$, as in \cite[(34)]{San}. Let $Q(1)=
u_0+u_1t+\dots+ t^{m-1}u_{m-1}$ for some $u_i\in K$ and $Q(t)=
a_0+a_1t+\dots+ t^{m-1}a_{m-1}$ for some $a_i\in K$. Analogously as in \cite[(19)]{San}, we have $PL_{Q(z)}=L_z PL_{Q(1)}$.
Put $z=t$ in $PL_{Q(z)}=L_z PL_{Q(1)}$, this way we obtain a set of linear equations identical to the ones in \cite[(31)]{San}, just with the formula in the left hand side employing $\sigma_1$ and the ones on the right hand side $\sigma_2$. Arguing as in \cite[p.~246]{San},
if exactly one element in the first row of $P$ is  non-zero, then
exactly one element in each row or column of $P$ is non-zero, and these will all lie on
a transversal. We obtain hence analogously as in \cite[p.~248]{San} that
\[ P=\left [\begin {array}{cccccc}
VR(h_{11}) &    0      & ... & 0 \\
     0   & VR(h_{22})  & ... &  0\\
    0    &   0    &   VR(h_{33}) & ...  \\
...    & ...     & ...        & ...\\
   ...     &    ...     & ...    &  0  \\
0      &    ...     & 0   & VR(h_{mm})\\
\end {array}\right ]
 \]
 for some $h_{ij}\in K^\times$, call them $h_{i}=h_{ii}$.

\end{proof}

We can now do the proof of Theorem \ref{thm:important}, which is the case that $m=n$:

\begin{proof}
 Suppose that the algebras
  are
isotopic. Let $(Q,P,U)$ be such an isotopy. By Theorem \ref{thm:nuc},
\[ P=\left [\begin {array}{cccccc}
VR(h_1) &   0       &  ...& 0 \\
    0    & VR(h_2)  & 0 & ... \\
    0    &   0       &   VR(h_3) & ...  \\
...    & ...     & ...        & ...\\
    ...    &   ...      & ...    & 0  \\
   0     &     ...    & 0   & VR(h_m)\\
\end {array}\right ].
\]
Without loss of generality we assume that $h_1=1$.
This is possible, since for every $a\in K^\times$, $(id, D_a,D_a)$ is an autotopism of $A_i$, where $D_a$
denotes the diagonal matrix with $a$
as its diagonal entries. Hence we can multiply $P$ by a suitable autotopism to achieve $h_1=1$. As in  \cite[(44)]{San}, we also obtain that $Q=U=PL_{Q(1)}$. We continue along the lines of Sandler's proof to see that also in our setting, $h_k=h_1 \sigma_2(h_1)\sigma_2^2(h_1)\cdots\sigma_2^{k-1}(h_1)$. Note
that up to here, it is not required that the algebra is a division algebra.
Then a short calculation analogous to the one given in \cite[p.~264]{San} shows that we obtain either that $\sigma(V(a_1))=a_2 h_1 \sigma_2(h)\sigma_2^2(h)\cdots\sigma_2^{m-1}(h)=a_2 N_{K/F}(h)$ or $V(a_1)=a_2 h_1 \sigma_2(h)\sigma_2^2(h)\cdots\sigma_2^{m-1}(h)=a_2 N_{K/F}(h)$ for some $h\in K^\times$. In either case, this implies that $P$ is an isomorphism between the two algebras by Theorem \ref{T:classifyisomorphism}. This means that $\sigma_1=\sigma_2$ by \cite[Theorem 5.1]{NP}.
\end{proof}

Note that the above proof does not require that the field $F$ is finite.

For a finite field $F=\mathbb{F}_q$ and $t^m-a_1\in \mathbb{F}_{q^n}[t;\sigma_1]$, $t^m-a_2 \in \mathbb{F}_{q^n}[t;\sigma_2]$ both irreducible and not right-invariant, these algebras are  the dual algebras of Sandler's semifields.
For $n=m$  we obtain from \cite[Theorem 9]{San}, Theorem \ref{thm:important}:

\begin{corollary}\label{cor:main1}
Let  $\sigma_1$ and $ \sigma_2$ be any two distinct generators of  $\Gal(\mathbb{F}_{q^m}/\mathbb{F}_q)$.  Let $t^m-a_1\in \mathbb{F}_{q^m}[t;\sigma_1]$ and $t^m-a_2\in \mathbb{F}_{q^m}[t;\sigma_2]$ be irreducible  and not right-invariant.  Then
$$
\mathbb{F}_{q^m}[t;\sigma_1]/\mathbb{F}_{q^m}[t;\sigma_1](t^m-a_1)=(\mathbb{F}_{q^m}/\mathbb{F}_{q},\sigma_1,a_1)$$
and $$ \mathbb{F}_{q^m}[t;\sigma_2]/\mathbb{F}_{q^m}[t;\sigma_2](t^m-a_2)=(\mathbb{F}_{q^m}/\mathbb{F}_{q},\sigma_2,a_2)
$$
are non-isomorphic (respectively, non-isotopic) semifields.
\end{corollary}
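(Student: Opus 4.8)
The plan is to deduce this statement directly from the two Main Theorems, specialized to the case $n=m$. First I would check that the hypotheses of both theorems are met: since $t^m-a_i\in\mathbb{F}_{q^m}[t;\sigma_i]$ is irreducible, the Petit algebra $\mathbb{F}_{q^m}[t;\sigma_i]/\mathbb{F}_{q^m}[t;\sigma_i](t^m-a_i)$ is a finite-dimensional division algebra over $\mathbb{F}_q$, i.e.\ a semifield, by \cite[(7)]{P66}; since $t^m-a_i$ is not right-invariant we have $a_i\in\mathbb{F}_{q^m}\smallsetminus\mathbb{F}_q$, so each algebra is a \emph{proper} nonassociative algebra, and may indeed be written in the form $(\mathbb{F}_{q^m}/\mathbb{F}_q,\sigma_i,a_i)$. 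Finally $n=m$ certainly satisfies $n\geq m-1$.

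For the non-isomorphism assertion I would simply invoke Theorem \ref{thm:main1} with the field extension $\mathbb{F}_{q^m}/\mathbb{F}_q$, the distinct generators $\sigma_1,\sigma_2$ of $\Gal(\mathbb{F}_{q^m}/\mathbb{F}_q)$, and the not-right-invariant polynomials $t^m-a_i$: since $n=m\geq m-1$, Theorem \ref{thm:main1} yields
$$
\mathbb{F}_{q^m}[t;\sigma_1]/\mathbb{F}_{q^m}[t;\sigma_1](t^m-a_1)\not\cong \mathbb{F}_{q^m}[t;\sigma_2]/\mathbb{F}_{q^m}[t;\sigma_2](t^m-a_2).
$$

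For the non-isotopy assertion I would argue by contradiction: if $(\mathbb{F}_{q^m}/\mathbb{F}_q,\sigma_1,a_1)$ and $(\mathbb{F}_{q^m}/\mathbb{F}_q,\sigma_2,a_2)$ --- two proper nonassociative algebras, as checked above --- were isotopic, then Theorem \ref{thm:important} would force $\sigma_1=\sigma_2$, contradicting the assumption that $\sigma_1$ and $\sigma_2$ are distinct generators. Hence the two semifields are not isotopic. One may note in passing that this is consistent with \cite[Theorem 9]{San}: for a fixed generator $\sigma$, isotopy and isomorphism coincide for these semifields, so the two conclusions ``non-isomorphic'' and ``non-isotopic'' are two faces of the same phenomenon here, which is what the word ``respectively'' records.

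There is essentially no new obstacle: all the substantive work has already been carried out in the proofs of Theorems \ref{thm:main1} and \ref{thm:important}. The only points requiring a moment's care are bookkeeping ones --- confirming that irreducibility of $t^m-a_i$ over the finite field $\mathbb{F}_q$ yields a semifield, that non-right-invariance of $t^m-a_i$ is equivalent to $a_i\notin\mathbb{F}_q$ and hence to properness of the algebra, and that the equality $n=m$ places us within the range $n\geq m-1$ to which Theorem \ref{thm:main1} applies.
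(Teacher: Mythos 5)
Your proposal is correct and follows essentially the same route as the paper: the non-isomorphism statement is the $n=m$ specialization of Theorem \ref{thm:main1}, and the non-isotopy statement follows from Theorem \ref{thm:important} (together with the observation that non-right-invariance of $t^m-a_i$ makes each algebra a proper nonassociative algebra, hence a proper semifield when $t^m-a_i$ is irreducible). The bookkeeping checks you include (irreducibility $\Rightarrow$ division algebra, $a_i\notin\mathbb{F}_q$ $\Rightarrow$ properness, $n=m\geq m-1$) are exactly what the paper leaves implicit.
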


\begin{theorem} \label{thm:numb} (\cite[Theorem 3.1]{BPS})
Let $m$ be prime. Fix a generator $\sigma$ of ${\rm Gal}(\mathbb{F}_{q^m}/\mathbb{F}_{q})$.
\\ (i)  If $m$  does not divide $q-1$
 then there are exactly
$$\frac{q^m-q}{m(q-1)}$$
non-isomorphic semifields $(\mathbb{F}_{q^m}/\mathbb{F}_{q}, \sigma, a)$ of order $q^{m^2}$.
\\ (ii)
 If $m$ divides $q-1$  then there are exactly
\[m-1 + \frac{q^m-q - (q-1)(m-1)}{m(q-1)}\]
non-isomorphic  semifields $(\mathbb{F}_{q^m}/\mathbb{F}_{q}, \sigma, a)$ of order $q^{m^2}$.
\end{theorem}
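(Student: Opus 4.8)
The plan is to turn the classification up to isomorphism into an orbit count for a finite group acting on $K\smallsetminus F$, where $F=\mathbb{F}_q$, $K=\mathbb{F}_{q^m}$ and $G=\Gal(K/F)=\langle\sigma\rangle$, and then to do the count in the two cases. First I would fix the playing field: for $a\notin F$ the right nucleus of $(K/F,\sigma,a)$ is $K$ by Theorem~\ref{thm:nuc} (since $m$ is prime), hence $(K/F,\sigma,a)$ is a division algebra; conversely for $a\in F$ the polynomial $t^m-a$ is right-invariant and the resulting associative cyclic algebra $(K/F,\sigma,a)$ splits over the finite field $F$, so it is not a semifield. Thus the semifields under consideration are exactly the algebras $(K/F,\sigma,a)$ with $a\in K\smallsetminus F$. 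Since $n=m\geq m-1$, Theorem~\ref{T:classifyisomorphism} gives, for $a,b\in K\smallsetminus F$, that $(K/F,\sigma,a)\cong(K/F,\sigma,b)$ if and only if $\tau(a)=\bigl(\prod_{i=0}^{m-1}\sigma^i(k)\bigr)b=N_{K/F}(k)\,b$ for some $\tau\in G$, $k\in K^\times$; as $F$ is finite, $N_{K/F}\colon K^\times\to F^\times$ is surjective, so this is exactly the statement that $a$ and $b$ lie in one orbit of $\Gamma:=G\times F^\times$, of order $m(q-1)$, acting on $K\smallsetminus F$ by $(\tau,\mu)\cdot a=\mu\,\tau(a)$. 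Therefore the number of isomorphism classes of these semifields is the number of $\Gamma$-orbits on $K\smallsetminus F$, which by Theorem~\ref{thm:important} (or \cite[Theorem~9]{San}) is also the number of isotopy classes.

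The heart of the proof is the stabilizer computation. Set $\mu_m=\{\lambda\in F^\times:\lambda^m=1\}$, so $|\mu_m|=\gcd(m,q-1)$. Suppose $(\sigma^i,\mu)\in\Gamma$ fixes some $a\in K\smallsetminus F$ with $m\nmid i$; then $\mu=a/\sigma^i(a)\in F$, and applying $\sigma$ and rearranging yields $\sigma(a)/a\in\mathrm{Fix}(\sigma^i)=F$ (here $m$ prime forces $\langle\sigma^i\rangle=G$), so writing $\lambda=\sigma(a)/a$ we get $\sigma^i(a)=\lambda^i a$, and $\sigma^m(a)=a$ forces $\lambda^m=1$. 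Conversely, if $\lambda:=\sigma(a)/a\in\mu_m$ then $\sigma^i(a)=\lambda^i a$ and $(\sigma^i,\lambda^{-i})$ fixes $a$ for every $i$. Hence $\mathrm{Stab}_\Gamma(a)$ is trivial unless $\lambda=\sigma(a)/a$ lies in $\mu_m\smallsetminus\{1\}$, in which case $\mathrm{Stab}_\Gamma(a)=\langle(\sigma,\lambda^{-1})\rangle$ has order $m$ and the orbit of $a$ has size $q-1$. Let $S=\{a\in K\smallsetminus F:\sigma(a)/a\in\mu_m\}$; it is $\Gamma$-stable because $\lambda\in F$ is $G$-fixed, and by Hilbert's Theorem~90 the set $\{a\in K^\times:\sigma(a)/a=\lambda\}$ is a single coset of $F^\times$ for each $\lambda$ with $\lambda^m=1$, so $|S|=(|\mu_m|-1)(q-1)$.

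Now I would split into cases. If $m\nmid q-1$ then $\gcd(m,q-1)=1$ (as $m$ is prime), so $\mu_m=\{1\}$, $S=\emptyset$, and $\Gamma$ acts freely on $K\smallsetminus F$; the orbit count is $(q^m-q)/(m(q-1))$, which is (i). If $m\mid q-1$ then $|\mu_m|=m$, so $|S|=(m-1)(q-1)$: every orbit inside $S$ has size $q-1$, contributing $m-1$ orbits, and on the $\Gamma$-stable complement $K\smallsetminus(F\cup S)$, of size $(q^m-q)-(m-1)(q-1)$, the action is free, contributing $\bigl((q^m-q)-(m-1)(q-1)\bigr)/(m(q-1))$ orbits; summing gives (ii). (One may instead run a single Cauchy--Frobenius count, which yields $\bigl((q^m-q)+(m-1)(|\mu_m|-1)(q-1)\bigr)/(m(q-1))$ and specializes to both formulas.) I expect the genuinely delicate point to be the very first one: showing that for $m$ prime and $n=m$ \emph{every} $a\in K\smallsetminus F$ gives a division algebra, without assuming $F$ contains a primitive $m$th root of unity --- once that is in hand, the rest is the orbit bookkeeping above, whose only fussy steps are the $\Gamma$-stability of $S$ and the Hilbert~90 computation of $|S|$.
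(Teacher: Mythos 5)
Your argument is correct, and it is essentially the standard proof of this result: the paper itself gives no proof here but cites \cite[Theorem 3.1]{BPS}, whose argument is the same orbit count for $\Gal(\mathbb{F}_{q^m}/\mathbb{F}_q)\times\mathbb{F}_q^\times$ acting on $\mathbb{F}_{q^m}\smallsetminus\mathbb{F}_q$, derived from the isomorphism criterion $\tau(a)=N_{K/F}(k)\,b$ together with surjectivity of the norm. Your preliminary point that every $a\in\mathbb{F}_{q^m}\smallsetminus\mathbb{F}_q$ yields a division algebra when $m$ is prime is already covered by the paper's Theorem~\ref{thm:nuc} combined with the right-nucleus criterion (or by Theorem~\ref{SteeleThm}), so no root-of-unity hypothesis is needed, and your stabilizer/Hilbert~90 bookkeeping checks out against both stated formulas.
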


\ignore{
\begin{corollary} \label{cor:count}
 (i)  Let $m$ not be prime and let  $\varphi(m)$ be the number of generators of the cyclic Galois group  ${\rm Gal}(K/F)$ of order $m$. Then we have $\varphi(m)$ different classes of non-isotopic semifields $(\mathbb{F}_{q^m}/\mathbb{F}_q,\sigma_i,a)$, one class for each choice of generator $\sigma_i$ of ${\rm Gal}(K/F)$.
\\ (ii)  If $m$ is prime and  does not divide $q-1$, then there are exactly
\[\frac{(q^m-q)(m-1)}{(q-1)m}\]
non-isotopic Sandler semifields 
of order $q^{m^2}$ with center $F=\mathbb{F}_q$ and left, middle and right nucleus $K=\mathbb{F}_{q^m}$.
\\ (iii)
 If $m$ is prime and divides $q-1$, then there are exactly
\[(m-1)^2+(m-1) \frac{q^m-q - (q-1)(m-1)}{m(q-1)}\]
non-isotopic Sandler semifields 
of order $q^{m^2}$ with center $F=\mathbb{F}_q$ and left, middle and right nucleus $K=\mathbb{F}_{q^m}$.
\end{corollary}
}

 We deduce that there are more non-isomorphic Sandler semifields than previously counted in \cite[Theorem 3.1]{BPS}:

\begin{theorem} \label{thm:count}
 (i)  Let   $\varphi(m)$ be the number of generators of the cyclic Galois group ${\rm Gal}(\mathbb{F}_{q^m}/\mathbb{F}_{q})$  of order $m$ ($\varphi$ being the Euler function). Then we have $\varphi(m)$ different classes of non-isomorphic semifields $(\mathbb{F}_{q^m}/\mathbb{F}_{q},\sigma_i,a)$, one class for each choice of generator $\sigma_i$ of ${\rm Gal}(\mathbb{F}_{q^m}/\mathbb{F}_{q})$.
\\ (ii)  If $m$ is prime and  $m$  does not divide $q-1$, then there are exactly
\[\frac{(m-1)(q^m-q)}{m(q-1)}\]
non-isomorphic Sandler semifields 
of order $q^{m^2}$ with center $\mathbb{F}_q$ and left, middle and right nucleus $\mathbb{F}_{q^m}$.
\\ (iii)
 If $m$ is prime and divides $q-1$, then there are exactly
\[(m-1)((m-1) + \frac{q^m-q - (q-1)(m-1)}{m(q-1)})\]
non-isomorphic Sandler semifields  
of order $q^{m^2}$ with center $F=\mathbb{F}_q$ and left, middle and right nucleus $K=\mathbb{F}_{q^m}$.
\end{theorem}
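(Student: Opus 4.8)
The plan is to deduce (i) directly from the First Main Theorem~\ref{thm:main1} (in the form of Corollary~\ref{cor:main1}), and then to obtain (ii) and (iii) by multiplying the single-generator count of Theorem~\ref{thm:numb} by the number of generators.

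For (i): $\Gal(\mathbb{F}_{q^m}/\mathbb{F}_q)$ is cyclic of order $m$, hence has exactly $\varphi(m)$ generators $\sigma_1,\dots,\sigma_{\varphi(m)}$. Every semifield of the form $(\mathbb{F}_{q^m}/\mathbb{F}_q,\sigma,a)$ is built from one of these generators, and by Corollary~\ref{cor:main1} (which is the case $n=m\geq m-1$) no semifield $(\mathbb{F}_{q^m}/\mathbb{F}_q,\sigma_i,a)$ is isomorphic to a semifield $(\mathbb{F}_{q^m}/\mathbb{F}_q,\sigma_j,b)$ when $\sigma_i\neq\sigma_j$. So these semifields partition into $\varphi(m)$ classes, one for each generator, with no isomorphism between members of distinct classes, which is exactly (i). (For $m=2$ there is just one generator and there is nothing to check.)

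For (ii) and (iii): fix a generator $\sigma$. Theorem~\ref{thm:numb} holds for any generator, so for each of the $\varphi(m)$ generators the number of pairwise non-isomorphic semifields $(\mathbb{F}_{q^m}/\mathbb{F}_q,\sigma_i,a)$ of order $q^{m^2}$ equals the same value, namely $N=\frac{q^m-q}{m(q-1)}$ if $m\nmid q-1$ and $N=m-1+\frac{q^m-q-(q-1)(m-1)}{m(q-1)}$ if $m\mid q-1$. As $m$ is prime, $\varphi(m)=m-1$, and by (i) the $m-1$ generator-classes are pairwise free of isomorphisms between them, so the total number of non-isomorphic semifields of this form is $(m-1)N$, which is precisely the formula in (ii), respectively (iii). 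It remains to transfer this to Sandler's semifields, which by definition are the opposite algebras of the $(\mathbb{F}_{q^m}/\mathbb{F}_q,\sigma,a)$: since $A\mapsto A^{op}$ is a bijection on isomorphism classes the count $(m-1)N$ is unchanged, and since $A\mapsto A^{op}$ fixes the center and the middle nucleus and interchanges the left and right nuclei, the values ${\rm C}=\mathbb{F}_q$ and ${\rm Nuc}_l={\rm Nuc}_m={\rm Nuc}_r=\mathbb{F}_{q^m}$ (valid for the proper algebras $(\mathbb{F}_{q^m}/\mathbb{F}_q,\sigma,a)$ when $m$ is prime, as recorded in Section~\ref{sec:prel}) carry over verbatim to the Sandler semifields.

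I do not expect a serious obstacle: the substantive input is Theorems~\ref{thm:main1} and~\ref{thm:numb}, and what is left is the disjoint-classes bookkeeping together with the routine check that $A\mapsto A^{op}$ preserves the center and permutes the nuclei. The only point requiring a little care is keeping track of the prescribed center and nucleus in the statement: one must note that these invariants are exactly the ones isolated in the preliminaries for proper nonassociative cyclic algebras of prime degree, and that the opposite-algebra involution leaves them unchanged as an unordered datum.
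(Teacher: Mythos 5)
Your proposal is correct and follows essentially the same route as the paper: part (i) is the First Main Theorem (via Corollary~\ref{cor:main1}) applied to the $\varphi(m)$ generators, and parts (ii)--(iii) multiply the per-generator count of Theorem~\ref{thm:numb} by $\varphi(m)=m-1$, the generator classes being pairwise non-isomorphic by (i). Your explicit remarks that $A\mapsto A^{op}$ is a bijection on isomorphism classes preserving the center and permuting the nuclei only make precise what the paper leaves implicit.
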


\begin{proof}
 (i) is trivial.
 \\
(ii)   For each $i$, $1\leq i\leq m-1$, there are exactly
\[\frac{q^m-q}{m(q-1)}\]
non-isomorphic Sandler semifields  of order $q^{m^2}$ that are the opposite algebras of $(\mathbb{F}_{q^m}/\mathbb{F}_{q}, \sigma^i, a)$.
\\ (iii) For each $i$, $1\leq i\leq m-1$ 
 there are exactly
\[m-1 + \frac{q^m-q - (q-1)(m-1)}{m(q-1)}\]
non-isomorphic Sandler semifields  of order $q^{m^2}$ that are the opposite algebras of $(\mathbb{F}_{q^m}/\mathbb{F}_{q}, \sigma^i, a)$.
 In total there are therefore up to isomorphism exactly
\[(m-1)(m-1 + \frac{q^m-q - (q-1)(m-1)}{m(q-1)})=(m-1)^2+(m-1) \frac{q^m-q - (q-1)(m-1)}{m(q-1)}\]
Sandler semifields of order $q^{m^2}$ with center $\mathbb{F}_q$ and left, middle and right nucleus $\mathbb{F}_{q^m}$.

\end{proof}

When $m$ is not prime, we have the following statement, again for $m=n$.

\begin{theorem}\label{SteeleThm}  \cite[Theorem 4.4, Corollary 4.5]{S12}
Suppose that $(K/F,\sigma,a)$ is a nonassociative cyclic algebra where $K/F$ is of degree $m$.
If $a$ does not lie in a proper subfield of $K/F$, then $(K/F,\sigma,a)$ is a division algebra.
\end{theorem}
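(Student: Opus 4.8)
The plan is to obtain the statement from the computation of the right nucleus in Theorem~\ref{thm:nuc}, together with the equivalence recalled in the preliminaries that $A := K[t;\sigma]/K[t;\sigma](t^m-a)$ is a division algebra if and only if ${\rm Nuc}_r(A)$ is a division algebra (\cite[Proposition 4]{G}). Put $G = {\rm Gal}(K/F)$ and $H = \{\tau \in G : \tau(a) = a\}$; an element of $G$ fixes $a$ precisely when it fixes $F(a)$, so $H = {\rm Gal}(K/F(a))$ and its fixed field is $F(a)$. Hence the hypothesis ``$a$ lies in no proper subfield of $K/F$'' is exactly the statement $F(a) = K$, i.e. $H = \{1\}$. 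Since $[K:F] = m \geq 2$, the field $F$ is itself a proper subfield of $K/F$, so the hypothesis already forces $a \notin F$; therefore $t^m - a$ is not right-invariant, $A = (K/F,\sigma,a)$ is a proper nonassociative algebra, and Theorem~\ref{thm:nuc} applies.

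Write $H = \langle\sigma^s\rangle$ with $m = sr$ (recall $n = m$ here). By Theorem~\ref{thm:nuc} and the discussion following it, the right nucleus of $A$ is the associative subalgebra $K \oplus Kt^{s} \oplus \cdots \oplus Kt^{(r-1)s}$, which is the cyclic algebra $(K/E,\sigma^s,a)$ of degree $r$ over $E = {\rm Fix}(\sigma^s)$, with $[E:F] = s$. If $r > 1$ then $a \in E$ (since $\sigma^s$ fixes $a$) while $[E:F] = m/r < m$, so $a$ would lie in the proper subfield $E$, contrary to hypothesis; hence $r = 1$, forcing $s = m$ and $E = K$, so the displayed direct sum reduces to its single summand $K$. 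Thus ${\rm Nuc}_r(A) = K$, a field and hence a division algebra, and so $A = (K/F,\sigma,a)$ is a division algebra by the equivalence above. (For $m$ prime the only proper subfield of $K/F$ is $F$, so this says in that case that every $(K/F,\sigma,a)$ with $a \in K \smallsetminus F$ is a division algebra.)

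I should be candid about where the weight lies: the argument above is short only because it rests on the two cited facts, and the real content is inside the equivalence ``$A$ is a division algebra $\iff$ ${\rm Nuc}_r(A)$ is a division algebra''. That equivalence is not elementary --- it amounts to controlling which factorizations $t^m - a$ can admit in $K[t;\sigma]$, and such factorizations are not governed by the divisors of $m$ in any naive way (already in a commutative polynomial ring $t^m - 1$ can have irreducible factors whose degree does not divide $m$), so one cannot simply reduce to ``pure'' right factors $t^d - b$. Hence, if one wanted a proof entirely from scratch, re-establishing that equivalence --- equivalently, showing that the right nucleus (eigenring) of $t^m - a$ detects its irreducibility --- is the step I expect to be the main obstacle; granting it, the rest is bookkeeping with the Galois correspondence and the explicit shape of ${\rm Nuc}_r$, the one point to watch being that $H$ trivial forces $r = 1$ and hence $E = K$, so $(K/E,\sigma^s,a)$ collapses to $K$.
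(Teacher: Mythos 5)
The paper does not prove this statement at all --- it is quoted verbatim from Steele \cite[Theorem 4.4, Corollary 4.5]{S12} --- so there is no in-paper proof to measure you against. Your deduction is correct as a conditional argument, and in fact it reconstructs what Steele's own proof does: his Theorem~4.4 is precisely the equivalence ``$(K/F,\sigma,a)$ is a division algebra if and only if its right nucleus $(K/F(a),\sigma^s,a)$ is,'' and Corollary~4.5 is the specialization you carry out, namely that $F(a)=K$ forces $H=\{1\}$, $r=1$, and ${\rm Nuc}_r(A)=K$. Your Galois bookkeeping is right, including the observation that the hypothesis already rules out $a\in F$ (so $t^m-a$ is not right-invariant and Theorem~\ref{thm:nuc} applies), and the collapse $r=1$, $E=K$ is handled correctly.

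Two remarks on where the weight sits. First, you only need one direction of the cited equivalence, namely ``${\rm Nuc}_r(A)$ a division algebra $\Rightarrow$ $A$ a division algebra''; the converse is trivial since ${\rm Nuc}_r(A)$ is a unital subalgebra. You are right to be suspicious that this direction is the entire content: in general the eigenring of a reducible $f$ \emph{can} be a division ring (if $R/Rf$ is uniserial of length two with non-isomorphic composition factors, every nonzero endomorphism is forced to be injective), so the principle ``eigenring division $\Rightarrow$ $f$ irreducible'' is false for arbitrary $f$ in arbitrary $K[t;\sigma]$ and genuinely uses the shape $t^m-a$ together with $n=m$. Indeed, already the containment $K\subseteq{\rm Nuc}_r(A)$ used in Theorem~\ref{thm:nuc} requires $n\mid m$, so one should not expect the nucleus criterion to transfer outside the cyclic-algebra setting. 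Second, since the implication you lean on is itself essentially Steele's Theorem~4.4, a proof that cites it (or \cite[Proposition 4]{G} as the paper's preliminaries do) is circular-adjacent as a proof ``of Steele's theorem,'' though perfectly legitimate as a derivation of Corollary~4.5 from Theorem~4.4. Granting that one implication, everything else in your argument is correct.
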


\begin{example} We briefly discuss the $n=4$ case:
Let $F$  be a finite field of odd prime characteristic.  Let $K$ be the cyclic Galois field extension of degree four of $F$ with ${\rm Gal}(K/F)=\langle \sigma\rangle$.   Then $K$ has exactly one  quadratic subfield $E={\rm Fix}(\sigma^2)$.

Let $a\in K\smallsetminus F$ and consider $(K/F,\sigma,a)$.  This is a 16-dimensional algebra over $F$.  By Theorem~\ref{SteeleThm}, if $a\notin E$ then $(K/F,\sigma,a)$ is a division algebra.

Note that the nonassociative cyclic $E$-algebra $B=(K/E,\sigma^2, a)$ is an $F$-subalgebra of $(K/F,\sigma,a)$.
If $a\not\in E$, then $B$ is a proper nonassociative quaternion division algebra over $E$. 

If  $a\in E$, then $B$  is the right nucleus of $A$.  In this case, $B$ is an associative quaternion algebra over $E$, and this is a never  a division algebra over a finite field.
Since $B$ is a division algebra if and only if $(K/F,\sigma,a)$ is a division algebra, when $F$ is a finite field this case does not yield a semifield.

We thus showed by a direct proof that $(K/F,\sigma,a)$ is a semifield if and only if  $a\not\in E$. So the first step is to count all these $a$'s.
\end{example}

\section{An explicit parametrization of nonassociative cyclic algebras in odd characteristic}\label{sec:parametrization}

Two proper semifields coordinatize the same non-Desarguesian plane if and only if they are isotopic. The subsequent results thus yield a full parametrization of the non-Desarguesian planes related to the semifields $(\mathbb{F}_{q^m}/\mathbb{F}_q,\sigma,a)$ of odd prime degree $m$.

\subsection{When $m$ is an odd prime}

Suppose that  $m$ is an odd prime and that
 $F=\mathbb{F}_q$ has odd characteristic and contains a primitive $m$th root of unity $\zeta\in F$, which implies that $gcd(m,p)=1$. Let $K=\mathbb{F}_{q^m}$ and fix a choice of generator $\sigma$ of $\Gal(\mathbb{F}_{q^m}/\mathbb{F}_{q})$.  Since $\mathbb{F}_{q^m}$ is the splitting field of some polynomial $x^m-b$, with $b\notin (\mathbb{F}_{q}^\times)^m$,  we may choose $\beta\in \mathbb{F}_{q^m}$ to be a root of $x^m-b$ that satisfies $\sigma(\beta) = \zeta \beta$.
Then $$
\{1, \beta, \beta^2, \ldots,\beta^{m-1}\}
$$
is an $\mathbb{F}_{q}$-basis for $\mathbb{F}_{q^m}$ and $\sigma(\beta^k)=\zeta^k\beta^k$ for all $0\leq k <m$.

We now parametrize the nonassociative cyclic algebras $(\mathbb{F}_{q^m}/\mathbb{F}_{q},\sigma,a)$ of odd prime degree $m$, applying results from \cite{NP}.

Let $\mathcal{I} = \mathcal{P}(\{0, 1, \ldots, m-1\}) \smallsetminus \{\{0\},\emptyset\}$ be the set of nonempty subsets of $\{0,1, \ldots,m-1\}$, excluding the set $\{0\}$.  For each $I \in \mathcal{I}$, define
$$K(I) = \{ \sum_{i=0}^{m-1} a_i \beta^i \mid \text{ for all } i\in I, a_i\in \mathbb{F}_{q}^\times, \text{and  for all } i\notin I, a_i=0\} \subset \mathbb{F}_{q^m}\smallsetminus \mathbb{F}_{q}.$$

Let $I\in \mathcal{I}$ be such that $\vert I \vert =k+1\geq 2$. Let
 $\mathbb{F}_{q}^{[\times k]}=\mathbb{F}_{q}^\times\times \mathbb{F}_{q}^\times \times \cdots \times \mathbb{F}_{q}^\times$ be the $k$-fold direct product.
  If the elements of $I$ are $i_0<i_1<\cdots <i_k$ then let
$$
\Delta_I = \{ (\zeta^{s(i_1-i_0)}, \zeta^{s(i_2-i_0)}, \ldots, \zeta^{s(i_k-i_0)}) \in \mathbb{F}_{q}^{[\times k]} \,|\, 1\leq s\leq m\}.
$$
  Write $\mathbb{F}_{q}^{[\times k]}/\Delta_I$ for any fixed choice of representatives for these cosets. For any fixed choice of $a_{i_0}\in F^\times$, define
$$
K(I;a_{i_0}) = \{ a_{i_0}\beta^{i_0} + \sum_{j=1}^{k}a_{i_j}\beta^{i_j} : (a_{i_1},a_{i_2}, \ldots, a_{i_k})\in \mathbb{F}_{q}^{[\times k]}/\Delta_I \}.
$$
Finally, set $K(I;a_{i_0})=\{a_{i_0}\beta^{i_0}\}$ when $I=\{i_0\}\in \mathcal{I}$ and define
$$
\mathcal{S}(\mathbb{F}_{q^m})=\bigcup_{ I\in \mathcal{I} } K(I;1) \subset  \mathbb{F}_{q^m}\smallsetminus \mathbb{F}_{q}.
$$

\begin{theorem} \label{Main:general}
Suppose $m$ is an odd prime, $\mathbb{F}_q$ has odd characteristic and contains a  primitive $m$th root of unity $\zeta$.  Then the distinct isomorphism classes of nonassociative cyclic algebras of degree $m$ over $F$ are represented by
$$
(\mathbb{F}_{q^m}/\mathbb{F}_{q},\sigma,a)
$$
where $ \mathbb{F}_{q^m}$ is the \emph{unique} cyclic Galois field extensions of $\mathbb{F}_{q}$ of degree $m$;
\begin{itemize}
\item[1.] $\sigma$ is a generator of $\Gal(\mathbb{F}_{q^m}/\mathbb{F}_{q})$;
\item[2.] $a \in \mathcal{S}(\mathbb{F}_{q^m})$.
\end{itemize}
\end{theorem}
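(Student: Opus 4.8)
The plan is to combine the three classification ingredients already assembled in the excerpt: first, Theorem \ref{T:classifyisomorphism} with $n=m$, which says that $(\mathbb{F}_{q^m}/\mathbb{F}_q,\sigma,a)\cong(\mathbb{F}_{q^m}/\mathbb{F}_q,\sigma,b)$ if and only if there exist $\tau\in\Gal(\mathbb{F}_{q^m}/\mathbb{F}_q)$ and $k\in\mathbb{F}_{q^m}^\times$ with $\tau(a)=\prod_{i=0}^{m-1}\sigma^i(k)\,b$; second, the fact (Theorem \ref{thm:main1} / Corollary \ref{cor:Euler} plus the existing results about different generators) that algebras built from \emph{different} generators are never isomorphic, so it suffices to classify within a single fixed $\sigma$; and third, the existence result (the Petit theorem quoted just before \S\ref{sec:main}, together with Corollary \ref{cor:div}) that for $m$ an odd prime and $\mathbb{F}_q\ni\zeta$, every $a\in\mathbb{F}_{q^m}\smallsetminus\mathbb{F}_q$ gives a division algebra — so that the entire set $\mathcal{S}(\mathbb{F}_{q^m})$ indeed consists of semifields, and every isomorphism class of proper nonassociative cyclic algebra has a representative with $a\notin\mathbb{F}_q$. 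First I would reduce, using Lemma \ref{Lem:differentextensions}, to the unique degree-$m$ extension $\mathbb{F}_{q^m}$, and fix one generator $\sigma$; then $\tau=\sigma^\ell$ for some $\ell$, and since $\sigma(\beta^i)=\zeta^i\beta^i$, the Galois action on the coordinates in the basis $\{1,\beta,\dots,\beta^{m-1}\}$ is diagonal.

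Next I would make the equivalence relation $a\sim b$ explicit in coordinates. Writing $a=\sum a_i\beta^i$, $b=\sum b_i\beta^i$, and $P(k):=\prod_{i=0}^{m-1}\sigma^i(k)=N_{\mathbb{F}_{q^m}/\mathbb{F}_q}(k)\in\mathbb{F}_q^\times$ (this is the key simplification: the ``norm'' factor is a \emph{scalar} in $\mathbb{F}_q^\times$, and since $\mathbb{F}_q$ is finite with $m\mid$ or $\nmid q-1$ controlled by $\zeta\in\mathbb{F}_q$, one has $N(\mathbb{F}_{q^m}^\times)=\mathbb{F}_q^\times$, so $P(k)$ ranges over all of $\mathbb{F}_q^\times$), the relation becomes: $b\sim a$ iff there are $\ell\in\{0,\dots,m-1\}$ and $c\in\mathbb{F}_q^\times$ with $\zeta^{i\ell}a_i=c\,b_i$ for every $i$. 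In particular the \emph{support} $I=\{i:a_i\neq0\}$ is an invariant; this is why $\mathcal{I}$ is the indexing set and why $\{0\}$ is excluded ($a\in\mathbb{F}_q$ would be associative). For a fixed support $I=\{i_0<i_1<\dots<i_k\}$, scaling by $c$ lets me normalize $a_{i_0}=1$, and then the residual freedom is exactly the simultaneous scaling $(a_{i_1},\dots,a_{i_k})\mapsto(\zeta^{s(i_1-i_0)}a_{i_1},\dots,\zeta^{s(i_k-i_0)}a_{i_k})$ coming from choosing $\ell=s$ (and re-normalizing the leading coordinate back to $1$), i.e. the action of $\Delta_I$ on $\mathbb{F}_q^{[\times k]}$. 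Hence the classes with support $I$ are in bijection with $\mathbb{F}_q^{[\times k]}/\Delta_I$, which is precisely $K(I;1)$, and taking the union over $I\in\mathcal{I}$ gives $\mathcal{S}(\mathbb{F}_{q^m})$ as an exact transversal.

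Finally I would assemble: every proper nonassociative cyclic algebra of degree $m$ over $\mathbb{F}_q$ is isomorphic to one with $a\in\mathbb{F}_{q^m}\smallsetminus\mathbb{F}_q$ and, within a fixed $\sigma$, to exactly one with $a\in\mathcal{S}(\mathbb{F}_{q^m})$; by the ``different generators'' theorems no collisions occur between distinct $\sigma$; and each such algebra is a division algebra by Petit's theorem, so these are genuinely semifields (coordinatizing distinct planes up to isotopy by Theorem \ref{thm:important}). I expect the main obstacle to be the bookkeeping in the second paragraph: verifying carefully that, after normalizing $a_{i_0}=1$, the only isomorphisms preserving that normalization are exactly the $\Delta_I$-action — i.e. that the combined scalar-and-Galois freedom collapses to a single $\mathbb{Z}/m$'s worth of diagonal scalings and nothing more, with no extra identifications — and checking that distinct $I$ really cannot be matched (immediate once support is shown to be an invariant) and that $\Delta_I$ is well-defined as a subgroup (independence of the choice of which element of $I$ is called $i_0$, which is why the theorem phrases it via differences $i_j-i_0$). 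All of this is a finite-group orbit count once the coordinate form of $\sim$ is in hand, and it should follow the template of \cite{NP} cited in the statement.
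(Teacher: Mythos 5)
Your proposal is correct and follows essentially the same route as the paper: the paper's proof of Theorem \ref{Main:general} is simply the citation ``this is \cite[Theorem 5.2]{NP} applied to $F=\mathbb{F}_q$,'' together with the observation that the norm map is surjective for finite fields so that $F^\times/N_{K/F}(K^\times)$ is trivial --- which is exactly your ``key simplification'' that $\prod_{i=0}^{m-1}\sigma^i(k)=N_{K/F}(k)$ ranges over all of $\mathbb{F}_q^\times$. The explicit orbit bookkeeping you carry out (support as an invariant, normalizing $a_{i_0}$, the residual $\Delta_I$-action from Theorem \ref{T:classifyisomorphism} with $n=m$) is precisely the content of the cited result in \cite{NP}, so you have filled in the details the paper outsources rather than taken a different path.
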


This is \cite[Theorem 5.2]{NP} applied to $F=\mathbb{F}_{q}$.  Note that for finite fields, the norm map is surjective, hence $\normKFim=F^\times$ and so $F^\times/\normKFim=\{1\}$ is trivial.

\subsection{When $m=2$}
Suppose that  $m=2$ and that $\mathbb{F}_q$ has odd characteristic. We obtain from \cite[Theorem 4.2, Corollary 4.3]{NP}:

\begin{theorem}\label{T:p2}
Suppose $\mathbb{F}_{q^2}=\mathbb{F}_q(\sqrt{c})$ for some $c\in \mathbb{F}_q^\times\smallsetminus (\mathbb{F}_q^\times)^2$ and let $\sigma\in \Gal(\mathbb{F}_{q^2}/\mathbb{F}_q)$ be nontrivial.   Then the distinct isomorphism classes of the  nonassociative quaternion algebras with  nucleus $\mathbb{F}_{q^2}$ are represented by $(\mathbb{F}_{q^2}/\mathbb{F}_{q},\sigma,a)$, where $a$ is chosen from the set
$$
\mathcal{S}_2(\mathbb{F}_{q^2})=\{\sqrt{c},  1+s\sqrt{c} \mid  s\in \mathbb{F}_{q}^\times/\{\pm1\}\}.
$$
Alternatively, we can parametrize these distinct isomorphism classes by elements of the set
$$
\mathcal{S}'(\mathbb{F}_{q^2}) =
\{t+\sqrt{c} \mid  t \in \{0\}\cup \mathbb{F}_{q}^\times/\{\pm 1\}\}.
$$
\end{theorem}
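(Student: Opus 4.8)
The plan is to obtain Theorem~\ref{T:p2} from \cite[Theorem 4.2, Corollary 4.3]{NP} specialized to $F=\mathbb{F}_q$, and I will sketch the direct argument, which runs parallel to the proof of Theorem~\ref{Main:general}. The first step is to observe that, up to isomorphism, a nonassociative quaternion algebra over $\mathbb{F}_q$ with nucleus $\mathbb{F}_{q^2}$ is precisely an algebra $(\mathbb{F}_{q^2}/\mathbb{F}_q,\sigma,a)$ with $a\in\mathbb{F}_{q^2}\smallsetminus\mathbb{F}_q$: for such $a$ the polynomial $t^2-a$ is not right-invariant, so $(\mathbb{F}_{q^2}/\mathbb{F}_q,\sigma,a)$ is a proper $4$-dimensional algebra, it is a division algebra by \cite{P66} (the $m=2$ case recalled above, which has no root-of-unity hypothesis), and it has nucleus $\mathbb{F}_{q^2}$ since for $n=m$ prime the right nucleus equals $K$ (recalled in Section~\ref{sec:prel}); conversely every nonassociative quaternion algebra with separable quadratic nucleus is classically of this Cayley-type form. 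Hence it suffices to choose, for each isomorphism class, one representative $a\in\mathbb{F}_{q^2}\smallsetminus\mathbb{F}_q$.

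Next, since $n=2\ge m-1=1$, Theorem~\ref{T:classifyisomorphism} gives, for $a,b\in\mathbb{F}_{q^2}\smallsetminus\mathbb{F}_q$, that $(\mathbb{F}_{q^2}/\mathbb{F}_q,\sigma,a)\cong(\mathbb{F}_{q^2}/\mathbb{F}_q,\sigma,b)$ if and only if $\tau(a)=N_{\mathbb{F}_{q^2}/\mathbb{F}_q}(k)\,b$ for some $\tau\in\Gal(\mathbb{F}_{q^2}/\mathbb{F}_q)$ and some $k\in\mathbb{F}_{q^2}^\times$. Because $\mathbb{F}_q$ is finite, the norm is surjective onto $\mathbb{F}_q^\times$, so this relation collapses to: $b\in\{\lambda a,\ \lambda\sigma(a)\mid\lambda\in\mathbb{F}_q^\times\}$. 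Writing $a=x+y\sqrt c$ with $x,y\in\mathbb{F}_q$ and $y\ne0$ (which is equivalent to $a\notin\mathbb{F}_q$), and using $\sigma(x+y\sqrt c)=x-y\sqrt c$, the class of $a$ is $\{\lambda x\pm\lambda y\sqrt c\mid\lambda\in\mathbb{F}_q^\times\}$. If $x=0$ this class equals $\{\mu\sqrt c\mid\mu\in\mathbb{F}_q^\times\}$, with representative $\sqrt c$; if $x\ne0$, scaling by $x^{-1}$ yields the representative $1+s\sqrt c$ with $s=y/x\in\mathbb{F}_q^\times$, and comparing coefficients shows $1+s\sqrt c\sim1+s'\sqrt c$ forces $\lambda=1$ and $s'=\pm s$. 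Since scaling and $\sigma$ preserve the vanishing of the $\mathbb{F}_q$-part, classes with $x=0$ and with $x\ne0$ are never equal. This proves that $\mathcal S_2(\mathbb{F}_{q^2})=\{\sqrt c\}\cup\{1+s\sqrt c\mid s\in\mathbb{F}_q^\times/\{\pm1\}\}$ is a full, irredundant set of representatives, which is the first assertion.

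For the second parametrization I would instead normalize the coefficient of $\sqrt c$: as $y\ne0$, every $a$ is equivalent to $t+\sqrt c$ with $t=x/y\in\mathbb{F}_q$, and comparing coefficients shows $t+\sqrt c\sim t'+\sqrt c$ now forces $\lambda=\pm1$, hence $t'=\pm t$; so $\mathcal S'(\mathbb{F}_{q^2})=\{t+\sqrt c\mid t\in\{0\}\cup\mathbb{F}_q^\times/\{\pm1\}\}$ is also a transversal. (Equivalently, scaling $1+s\sqrt c$ by $s^{-1}$ gives $s^{-1}+\sqrt c$, and $s\mapsto s^{-1}$ is a bijection of $\mathbb{F}_q^\times/\{\pm1\}$, which matches the two lists directly.) The whole computation is routine once Theorem~\ref{T:classifyisomorphism} is available; the only two points deserving a word are the surjectivity of the norm over a finite field, which is what reduces the isomorphism relation to the scaling-and-conjugation action on $\mathbb{F}_{q^2}\smallsetminus\mathbb{F}_q$, and the initial identification of the nonassociative quaternion algebras with nucleus $\mathbb{F}_{q^2}$ as the algebras $(\mathbb{F}_{q^2}/\mathbb{F}_q,\sigma,a)$, $a\notin\mathbb{F}_q$. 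I do not anticipate any genuine obstacle.
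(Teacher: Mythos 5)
Your proposal is correct and takes essentially the same route as the paper: the paper's proof of Theorem~\ref{T:p2} consists of citing \cite[Theorem 4.2, Corollary 4.3]{NP} specialized to $F=\mathbb{F}_q$, which is exactly your starting point, and your added computation (reducing the isomorphism relation of Theorem~\ref{T:classifyisomorphism} to the scaling-and-conjugation action via surjectivity of the norm, then normalizing either the coefficient of $1$ or of $\sqrt{c}$) is just the content of that cited result written out. No gaps.
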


\section{What does this mean for the related linear codes?}\label{sec:codes}

Our results raise several questions that will be addressed in a future paper. It would certainly be interesting to see in how far our Second Main Theorem can be generalized to hold for other Petit algebras $K[t;\sigma]/K[t;\sigma]f$, when we still assume that the degree $m$ of $f$ satisfies that $n \geq m-1$ with $n$ the order of the automorphism $\sigma$ (the order may be infinite), but where $K/F$ does not need to be a Galois field extension.
A straightforward generalization of the proof of Theorem \ref{thm:main1} already shows:

\begin{theorem}\label{thm:main2}
Let  $F$ be an arbitrary field and let $n\geq 3$ be the degree of the cyclic Galois extension $K/F$.  Let  $\sigma_1$ and $ \sigma_2$ be any two distinct generators of $\Gal(K/F)$, and suppose that  $n \geq m-1$. Let
$$
f(t) = t^m - \sum_{i=0}^{m-1} a_i t^i \in K[t;\sigma_1],\quad  g(t) = t^m - b \in K[t;\sigma_2]
$$
such that all $a_i\in K\smallsetminus F$, and assume that $f$ and $g$
both are not right-invariant (i.e. the two Petit algebras are proper nonassociative algebras. Then
$$
K[t;\sigma_1]/K[t;\sigma_1]f\not\cong K[t;\sigma_2]/K[t;\sigma_2](t^m-b).
$$

\end{theorem}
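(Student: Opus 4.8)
The plan is to run the proof of Theorem~\ref{thm:main1} essentially word for word; the coefficients $a_0,\dots,a_{m-1}$ of $f$ play no role beyond ensuring that $K[t;\sigma_1]/K[t;\sigma_1]f$ is a \emph{proper} nonassociative Petit algebra. Since $\sigma_1$ and $\sigma_2$ both generate the cyclic group $\Gal(K/F)\cong\Z/n\Z$, write $\sigma_1=\sigma_2^{\,j}$ with $\gcd(j,n)=1$; here $j\neq 1$ because $\sigma_1\neq\sigma_2$. Suppose, for contradiction, that $H\colon K[t;\sigma_1]/K[t;\sigma_1]f\to K[t;\sigma_2]/K[t;\sigma_2](t^m-b)$ is an $F$-algebra isomorphism. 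As $f$ and $g=t^m-b$ are not right-invariant, both sides are proper nonassociative Petit algebras of dimension $mn$ over $F$ with left nucleus $K$ (Section~\ref{sec:prel}); since $f,g$ have degree $m$ and $n\geq m-1$, Lemma~\ref{le:isomorphisms1} applies and yields $H|_K=\tau\in\Gal(K/F)$ together with $H(t)=kt^j$ for some $k\in K^\times$, where $j\leq m-1$ (as $H(t)$ has degree $<m$). Combined with $j>1$ this already settles the case $m=2$ (there is then no such $H$), so assume $m\geq 3$ and let $\ell$ be the least positive integer with $\ell j>m$. Minimality gives $(\ell-1)j\leq m$, hence $\ell\leq m-1$ and $m<\ell j\leq m+j<2m$. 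Finally, $\Gal(K/F)$ is abelian, so $\tau$ commutes with $\sigma_1$ and $\sigma_2$.

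Put $\gamma_s(k)=k\,\sigma_2^{\,j}(k)\cdots\sigma_2^{\,(s-1)j}(k)$. Working inside the target algebra $K[t;\sigma_2]/K[t;\sigma_2](t^m-b)$, where $t^m=b$, and using $H(t^s)=H(t)H(t^{s-1})$, the same induction as in the proof of Theorem~\ref{thm:main1} gives
$$
H(t^s)=\gamma_s(k)\,t^{sj}\quad(1\leq s<\ell),\qquad H(t^\ell)=\gamma_\ell(k)\,\sigma_2^{\,\ell j-m}(b)\,t^{\ell j-m},
$$
which are exactly the formulas occurring there with $a_2$ replaced by $b$; by the range inequalities above all exponents lie in $\{0,\dots,m-1\}$, so no unexpected reductions occur.

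The only feature of $f$ used now is that powers of $t$ commute in every Petit algebra: in $K[t;\sigma_1]/K[t;\sigma_1]f$ one has $t\cdot t^\ell=t^{\ell+1}\ \mathrm{mod}_r f=t^\ell\cdot t$, whatever the $a_i$ are and whether or not a reduction modulo $f$ is performed. Applying $H$ yields $H(t)H(t^\ell)=H(t^\ell)H(t)$ in the target; computing both products directly from $H(t)=kt^j$ and the formula for $H(t^\ell)$ — splitting, exactly as in Theorem~\ref{thm:main1}, into the cases $(\ell+1)j<2m$ and $(\ell+1)j\geq 2m$ according to whether a further reduction modulo $t^m-b$ is needed — and comparing coefficients forces $\sigma_2^{\,\ell j-m}(b)=\sigma_2^{\,\ell j-m+j}(b)$, i.e.\ $b=\sigma_2^{\,j}(b)=\sigma_1(b)$. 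Thus $b\in\mathrm{Fix}(\sigma_1)=F$, contradicting the fact that $g=t^m-b$ is not right-invariant (equivalently $b\in K\smallsetminus F$). Hence no such $H$ exists, which is the assertion.

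I do not anticipate a real obstacle: the argument is entirely that of Theorem~\ref{thm:main1}. The one point genuinely to verify when passing from $f=t^m-a_1$ to $f=t^m-\sum a_i t^i$ is that the proof never needs the \emph{value} of the element $t\circ t^\ell$ in the source algebra, only the identity $t\circ t^\ell=t^\ell\circ t$ — so the $a_i$ are irrelevant beyond making the source a proper nonassociative algebra, which is exactly what makes Lemma~\ref{le:isomorphisms1} applicable. The only step requiring care, as for Theorem~\ref{thm:main1}, is the bookkeeping of the modular reductions on the target side in the case $(\ell+1)j\geq 2m$; that case analysis is copied verbatim.
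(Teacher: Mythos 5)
Your proof is correct and is precisely the argument the paper intends: the paper gives no separate proof of Theorem~\ref{thm:main2}, asserting only that it follows by ``a straightforward generalization of the proof of Theorem~\ref{thm:main1}'', and your observation that the coefficients $a_i$ enter only through the source algebra being a proper nonassociative Petit algebra (so that Lemma~\ref{le:isomorphisms1} applies) and through the $f$-independent identity $t\circ t^{\ell}=t^{\ell}\circ t$, with all subsequent computations taking place in the target algebra, is exactly the content of that generalization.
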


We expect to obtain similar results for large classes of Petit algebras $K[t;\sigma_1]/K[t;\sigma_1]f$.

\subsection{Linear maximum rank distance codes}

Linear maximum rank distance codes (MRD-codes) can be obtained from skew polynomial rings $K[t;\sigma,\delta]$, employing the invertible matrices representing the right  multiplication in a Petit division algebra $K[t;\sigma,\delta]/K[t;\sigma,\delta]f$. Every proper nonassociative Petit division algebra  hence canonically yields an $F$-linear  MRD-code in $M_{mn}(K)$ of dimension $mn$, when $f$ has degree $m$.
 Since the left nucleus of a proper Petit division algebra is $K$, its right multiplication also canonically yields a set of invertible matrices  in $M_m(K)$  that form an $F$-linear  MRD-code.

  Write $R(x)$ for the matrix of left multiplication by
 $x = x_0 + x_1t + \cdots + x_{n-1}t^{n-1} \in (\mathbb{F}_{q^m}/\mathbb{F}_{q},\sigma,a)$, $ x_i \in K$, with respect to the basis $\{1, t, \ldots, t^{n-1}\}$  in a division algebra $(\mathbb{F}_{q^m}/\mathbb{F}_{q},\sigma,a)$:
\[ R(x)=\left (\begin {array}{cccccc}
x_0 &  a\sigma(x_{m-1}) & ... & a \sigma^{m-1}(x_{1})\\
x_1 & \sigma(x_0) & ... &  a \sigma^{m-1}(x_{2})\\
x_2 & \sigma(x_{1}) & ... & a\sigma^{m-1}(x_{3})\\
...& ...  & ... & ...\\
x_{m-2} &  \sigma(x_{m-3})  & ...&  a\sigma^{m-1}(x_{n-1})\\
x_{m-1} &  \sigma(x_{m-2})  & ...& \sigma^{m-1}(x_{0})\\
\end {array}\right ).\]

These matrices define the MRD-code $\mathcal{C}_{\sigma,a}=\{R(x)\,|\, x\in A\}\subset M_{m}(\mathbb{F}_{q^m})$.

 \begin{theorem}\label{thm:last1}
(i) The MRD-codes in $M_{m^2}(\mathbb{F}_{q})$   obtained from proper nonassociative Petit division algebras of the kind $\mathbb{F}_{q^m}[t,\sigma]/\mathbb{F}_{q^m}[t,\sigma](t^m-a)$ depend on the choice of the generator of the Galois group: the  $\varphi(m)$ different possible choices of generators of ${\rm Gal}(\mathbb{F}_{q^m}/\mathbb{F}_{q})$ yield a total of $\varphi(m)$ different classes of
 non-equivalent $\mathbb{F}_{q}$-linear MRD-codes in $M_{m^2}(\mathbb{F}_{q})$  with minimum distance $m$.
 \\ (ii)  The MRD-codes $\mathcal{C}_{\sigma,a}$ in $M_{m}(\mathbb{F}_{q^m})$  obtained from proper nonassociative Petit division algebras of the kind $\mathbb{F}_{q^m}[t,\sigma]/\mathbb{F}_{q^m}[t,\sigma](t^m-a)$ depend on the choice of the generator of the Galois group: different choices of generators of ${\rm Gal}(\mathbb{F}_{q^m}/\mathbb{F}_{q})$ yield a total of $\varphi(m)$ different classes of
  non-equivalent $\mathbb{F}_{q}$-linear MRD-codes $\mathcal{C}_{\sigma,a}$ in $M_{m}(\mathbb{F}_{q^m})$  with minimum distance $m^2$.
 \end{theorem}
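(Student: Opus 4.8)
The plan is to translate the non-isomorphism results for the algebras into non-equivalence of the associated MRD-codes, using the well-known dictionary between isotopy of the coordinatizing (pre)semifield and equivalence of the rank-metric code arising from its spread set of right-multiplication maps. First I would recall this dictionary precisely: two sets of matrices $\mathcal{C}, \mathcal{C}' \subset M_n(\F_q)$ are (semi-linearly) equivalent exactly when there are invertible $X, Y$ and a field automorphism $\rho$ of $\F_q$ with $\mathcal{C}' = X\,\mathcal{C}^\rho\,Y$; and for the code built from the right-multiplications $R(x)$ of a proper Petit division algebra $A = \mathbb{F}_{q^m}[t,\sigma]/\mathbb{F}_{q^m}[t,\sigma](t^m-a)$, this equivalence corresponds precisely to an isotopy of the underlying semifield (equivalently, of its opposite algebra). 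Since the left nucleus of $A$ equals $K = \mathbb{F}_{q^m}$, the $R(x)$ are in fact $\mathbb{F}_{q^m}$-linear on one side, so they may be viewed simultaneously as $m\times m$ matrices over $\mathbb{F}_{q^m}$ and, by restriction of scalars, as $m^2\times m^2$ matrices over $\F_q$; the two statements (i) and (ii) then refer to these two incarnations of the same code, and the equivalence relations match up under restriction of scalars (this is the standard reduction, see the literature on MRD-codes from semifields).

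Next I would invoke the algebraic input already available in the paper. By Theorem~\ref{thm:main1} (the First Main Theorem), for any two distinct generators $\sigma_1,\sigma_2$ of $\Gal(\mathbb{F}_{q^m}/\mathbb{F}_q)$ and any admissible $a_1,a_2$, the algebras $\mathbb{F}_{q^m}[t;\sigma_1]/(\cdots)$ and $\mathbb{F}_{q^m}[t;\sigma_2]/(\cdots)$ are non-isomorphic; and by Theorem~\ref{thm:important} (the Second Main Theorem), in the case $m=n$ they are moreover non-isotopic. Combining Theorem~\ref{thm:important} with Corollary~\ref{cor:main1} gives: algebras built from different generators are never isotopic, hence the corresponding rank-metric codes are never equivalent. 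There are $\varphi(m)$ generators $\sigma$ of the cyclic group $\Gal(\mathbb{F}_{q^m}/\mathbb{F}_q)$, so we obtain at least $\varphi(m)$ pairwise non-equivalent classes; conversely any two codes coming from the same generator (but possibly different $a$) lie in a class already accounted for, so there are exactly $\varphi(m)$ such classes. The statement about the minimum distance is immediate from the MRD property: a linear MRD-code in $M_m(\mathbb{F}_{q^m})$ of $\F_q$-dimension $m^2$ has minimum rank distance $m^2/m \cdot 1$... more precisely, the $R(x)$ with $x\neq 0$ are all invertible because $A$ is a division algebra, so the minimum distance is full rank $m$ over $\mathbb{F}_{q^m}$, and correspondingly $m^2$ over $\F_q$ after restriction of scalars.

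The main obstacle is the first step: carefully justifying that an equivalence of the two rank-metric codes forces an isotopy between the algebras $A_1$ and $A_2$, rather than merely some coarser matrix equivalence. The subtlety is that code-equivalence allows a semilinear twist $\rho \in \mathrm{Aut}(\F_q)$ and arbitrary invertible multipliers $X, Y$ on the two sides, whereas isotopy is a statement about the algebra structure maps $(f,g,h)$; one must check that the semilinear part can be absorbed into the Galois/Frobenius automorphism of $\mathbb{F}_{q^m}$ (using that $\F_q \subset \mathbb{F}_{q^m}$ and that the code is $\F_q$-linear and $\mathbb{F}_{q^m}$-linear on one side), and that $X\mathcal{C}Y$ being again of spread-set form pins down the multipliers as left/right multiplications, which is exactly the content of the isotopy. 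This is standard in the semifield/MRD literature (the correspondence between semifield isotopy classes and equivalence classes of the attached MRD-codes), so I would cite that correspondence and then feed in Theorems~\ref{thm:main1}, \ref{thm:important} and Corollary~\ref{cor:main1}; once the dictionary is in place the counting is routine, and the non-equivalence is simply a restatement of non-isotopy.
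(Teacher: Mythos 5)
Your proposal is correct and takes essentially the same route as the paper: both rest on the standard dictionary between isotopy classes of finite (pre)semifields and equivalence classes of the associated $\mathbb{F}_q$-linear MRD-codes (which the paper simply cites as well-known, via Sheekey), and then feed in the non-isotopy results of Theorem \ref{thm:important} and Corollary \ref{cor:main1} to separate the $\varphi(m)$ classes. The paper's proof is just a terser version of yours, taking the code-equivalence-implies-isotopy direction for granted rather than flagging it as the point requiring care.
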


\begin{proof}
(i) It is well-known that the isotopy classes of finite semifields of dimension $m^2$ over $F=\mathbb{F}_q$ are in one-one correspondence with the equivalence classes of $\mathbb{F}_q$-linear MRD-codes in $M_{m^2}(\mathbb{F}_q)$ with minimum distance $m^2$. Our results show that different choices of generators of ${\rm Gal}(K/F)$ yield non-equivalent $\mathbb{F}_{q}$-linear MRD-codes with minimum distance $m^2$.
\\ (ii)  The isotopy classes of the proper nonassociative Petit algebras $\mathbb{F}_{q^m}[t,\sigma]/\mathbb{F}_{q^m}[t,\sigma](t^m-a)$ over $\mathbb{F}_q$ are in one-one correspondence with the equivalence classes of the $\mathbb{F}_q$-linear MRD-codes $\mathcal{C}_{\sigma,a}$  in $M_{m^2}(\mathbb{F}_q)$ with minimum distance $m$, that are obtained employing the matrices that describe their right multiplication  \cite{Sheekey}.
   Thus again different choices of generators yield non-equivalent $\mathbb{F}_{q}$-linear MRD-codes with minimum distance $m$.
 \end{proof}

Since these codes are special cases of the codes constructed in \cite{Sheekey}  we conjecture that at least some of the other equivalence classes  of Sheekey's codes also will depend on the choice of generator of the Galois group involved.

  \begin{theorem} \label{thm:last2}
(i) Suppose $m$ is an odd prime, $\mathbb{F}_q$ has odd characteristic and contains a  primitive $m$th root of unity $\zeta$.
Then the distinct equivalence classes of $\mathbb{F}_q$-linear MRD-codes in $M_{m^2}(\mathbb{F}_q)$ with minimum distance $m^2$, respectively,   the  distinct equivalence classes of $\mathbb{F}_q$-linear MRD-codes $\mathcal{C}_{\sigma,a}$ in $M_{m}(\mathbb{F}_{q^m})$ with minimum distance $m$, that correspond to the algebras
$
(\mathbb{F}_{q^m}/\mathbb{F}_{q},\sigma,a)
$
can be obtained by using the (unique) cyclic Galois field extension $ \mathbb{F}_{q^m}$  of $\mathbb{F}_{q}$ of degree $m$,
 choosing a generator $\sigma$  of $\Gal(\mathbb{F}_{q^m}/\mathbb{F}_{q})$, and $a \in \mathcal{S}(\mathbb{F}_{q^m})$.\\
(ii)
Suppose $\mathbb{F}_{q^2}=\mathbb{F}_q(\sqrt{c})$ for some $c\in \mathbb{F}_q^\times\smallsetminus (\mathbb{F}_q^\times)^2$ and
 $ \Gal(\mathbb{F}_{q^2}/\mathbb{F}_q)=\langle \sigma \rangle$.  Then the distinct equivalence classes of $\mathbb{F}_q$-linear MRD-codes in $M_{4}(\mathbb{F}_q)$ with minimum distance $4$, respectively,   the  distinct equivalence classes of $\mathbb{F}_q$-linear MRD-codes $\mathcal{C}_{\sigma,a}$ in $M_{2}(\mathbb{F}_{q^m})$ with minimum distance $2$, that that correspond to the algebras
  $(\mathbb{F}_{q^2}/\mathbb{F}_{q},\sigma,a)$, can be obtained by choosing $a$  from the set
$$
\mathcal{S}_2(\mathbb{F}_{q^2})=\{\sqrt{c},  1+s\sqrt{c} \mid  s\in \mathbb{F}_{q}^\times/\{\pm1\}\}.
$$
or, alternatively,  from
$$
\mathcal{S}'(\mathbb{F}_{q^2}) =
\{t+\sqrt{c} \mid  t \in \{0\}\cup \mathbb{F}_{q}^\times/\{\pm 1\}\}.
$$
 \end{theorem}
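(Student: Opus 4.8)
The plan is to deduce Theorem~\ref{thm:last2} formally from the parametrizations of isomorphism classes already obtained in Theorems~\ref{Main:general} and~\ref{T:p2}, by passing through two intermediate identifications: first, that for the algebras $(\mathbb{F}_{q^m}/\mathbb{F}_q,\sigma,a)$ the relations "isotopic" and "isomorphic" coincide, and second, the dictionary between isotopy classes of these semifields and equivalence classes of the associated MRD-codes that was already invoked in the proof of Theorem~\ref{thm:last1}.

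First I would recall the two code-theoretic correspondences used for Theorem~\ref{thm:last1}: (a) the isotopy classes of finite semifields of $\mathbb{F}_q$-dimension $m^2$ are in bijection with the equivalence classes of $\mathbb{F}_q$-linear MRD-codes in $M_{m^2}(\mathbb{F}_q)$ of minimum distance $m^2$ (via the spread set of left/right multiplications of the $mn=m^2$-dimensional algebra); and (b), by \cite{Sheekey}, the isotopy classes of the Petit division algebras $\mathbb{F}_{q^m}[t,\sigma]/\mathbb{F}_{q^m}[t,\sigma](t^m-a)$ over $\mathbb{F}_q$ are in bijection with the equivalence classes of the codes $\mathcal{C}_{\sigma,a}\subset M_m(\mathbb{F}_{q^m})$ of minimum distance $m$, built from the matrices $R(x)$ describing right multiplication (using that the left nucleus equals $\mathbb{F}_{q^m}$). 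Since both (a) and (b) are bijections at the level of classes, it suffices to produce a complete and irredundant list of isotopy classes of the division algebras $(\mathbb{F}_{q^m}/\mathbb{F}_q,\sigma,a)$ and then transport it through (a) and (b).

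Next I would collapse "isotopy" to "isomorphism" for this family. By our Second Main Theorem~\ref{thm:important}, if $(\mathbb{F}_{q^m}/\mathbb{F}_q,\sigma_1,a_1)$ and $(\mathbb{F}_{q^m}/\mathbb{F}_q,\sigma_2,a_2)$ are isotopic then $\sigma_1=\sigma_2$, and when $\sigma_1=\sigma_2=\sigma$, \cite[Theorem~9]{San} gives that the two algebras are isotopic if and only if they are isomorphic; hence the isotopy classes of these algebras coincide with their isomorphism classes (for $m=2$ the Galois group has a unique generator, so this again follows from \cite[Theorem~9]{San}). Now I apply Theorem~\ref{Main:general}: under the hypotheses of part~(i), a complete irredundant list of isomorphism classes of nonassociative cyclic algebras of degree $m$ over $\mathbb{F}_q$ is given by the $(\mathbb{F}_{q^m}/\mathbb{F}_q,\sigma,a)$ with $\mathbb{F}_{q^m}$ the unique degree-$m$ extension, $\sigma$ ranging over the generators of $\Gal(\mathbb{F}_{q^m}/\mathbb{F}_q)$, and $a\in\mathcal{S}(\mathbb{F}_{q^m})$; all of these are division algebras, since $\mathcal{S}(\mathbb{F}_{q^m})\subset\mathbb{F}_{q^m}\smallsetminus\mathbb{F}_q$ and, by \cite{P66} (the theorem stated after Corollary~\ref{cor:div}), every $(\mathbb{F}_{q^m}/\mathbb{F}_q,\sigma,a)$ with $a\in\mathbb{F}_{q^m}\smallsetminus\mathbb{F}_q$ is a division algebra in the present range of $m$. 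Feeding this list through (a) and (b) proves part~(i). Part~(ii) is obtained identically, replacing Theorem~\ref{Main:general} by Theorem~\ref{T:p2}, the range of $m$ by $m=2$, and $\mathcal{S}(\mathbb{F}_{q^m})$ by $\mathcal{S}_2(\mathbb{F}_{q^2})$ (equivalently $\mathcal{S}'(\mathbb{F}_{q^2})$).

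The only point requiring real care — and the step I expect to be the sole genuine obstacle — is verifying that the correspondences (a) and (b) are class-bijective in the direction needed, so that distinct isotopy classes of the algebras yield non-equivalent codes and no two of the listed pairs $(\sigma,a)$ collapse to the same MRD-code; concretely this means matching the notion of equivalence of $\mathbb{F}_q$-linear MRD-codes precisely with isotopism of the coordinatizing semifield (respectively with isotopism of the Petit algebra in the $M_m(\mathbb{F}_{q^m})$ picture, as established in \cite{Sheekey}). Once that bookkeeping is in place the theorem follows with no further computation.
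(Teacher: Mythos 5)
Your proposal is correct and follows essentially the same route as the paper: parametrize isomorphism classes via Theorems~\ref{Main:general} and~\ref{T:p2}, identify isotopy with isomorphism using \cite[Theorem~9]{San}, and transport through the standard bijections between isotopy classes of semifields and equivalence classes of MRD-codes. You are in fact slightly more careful than the paper's own proof, which cites only \cite[Theorem~9]{San}: you correctly note that Theorem~\ref{thm:important} is also needed to separate algebras built from distinct generators, and you explicitly verify that every listed $(\mathbb{F}_{q^m}/\mathbb{F}_q,\sigma,a)$ is a division algebra so that the code correspondences apply.
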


\begin{proof}
(i) By Theorem  \ref{Main:general} and \cite[Theorem 9]{San}, we have a parametrization of the non-isotopic semifields of order $q^{m^2}$ obtained via the construction $(\mathbb{F}_{q^m}/\mathbb{F}_{q},\sigma,a)$ when ${\rm Fix}(\sigma)=F=\mathbb{F}_q$, with one representative for each isotopy class.
 Isotopy classes of finite semifields of dimension $m$ over $F=\mathbb{F}_q$ are in one-one correspondence with the  equivalence classes of $\mathbb{F}_q$-linear MRD-codes in $M_{m^2}(\mathbb{F}_q)$ with minimum distance $m^2$, respectively,  with the  equivalence classes of $\mathbb{F}_q$-linear MRD-codes in $M_{m}(\mathbb{F}_{q^m})$ with minimum distance $m$. Thus we have one representative for each corresponding MRD-code.
 \\ (ii) is proved as (i) but employing  Theorem  \ref{T:p2}.
 \end{proof}

\subsection{$(f,\sigma,\delta)$-codes}

Skew polynomials $f\in R=K[t;\sigma,\delta]$ can be used to build linear codes (for a comprehensive ``historical'' exposition, cf. \cite{BL13}).
Module $\sigma$-codes, $(f,\sigma,\delta)$-codes and the more recent skew polycyclic codes \cite{BP2025} all can be associated to the left ideals
of a suitable nonassociative Petit algebra $R/Rf$ for a suitable reducible $f\in R$:
the cyclic submodules studied in \cite{BU09, BU14, BU14.2} are exactly the left ideals in the Petit algebra $R/Rf$,
the $(\sigma,\delta)$-codes of \cite{BL13} are the codes $\mathcal{C}$ associated to some left ideal of the Petit algebra $R/Rf$. These ideals are generated by a right divisor $g$ of $f$.

If $\mathcal{C}$ is a linear code over $\mathbb{F}_{q^n}$ of length $m$,
then $\mathcal{C}$ is a skew $\sigma$-constacyclic code with constant $a$ if and only if the skew-polynomial representation
$\mathcal{C}(t)$ with elements $a(t)$ obtained from $(a_0,\dots,a_{m-1})\in \mathcal{C}$
is a left ideal of the nonassociative Petit algebra $\mathbb{F}_{q^n}[t;\sigma]/\mathbb{F}_{q^n}[t;\sigma](t^m-a)$, which is generated by a monic right divisor $g$ of
$f$ in $R$.

Let  $f(t)=t^m-a\in K[t;\sigma]$ and $F={\rm Fix}(\sigma)$.
Then any right divisor $g$  of degree $k$ of $f=t^m-a$
 can be used to construct a skew $\sigma$-constacyclic $[m,m-k]$-code with constant $a$. So we need to look for  Petit algebras $K[t,\sigma]/K[t,\sigma](t^m-a)$ that are not division algebras in this context.


Let $R=\mathbb{F}_{q^n}[t;\sigma]$ and  $a_i\in \mathbb{F}_{q^n}\smallsetminus \mathbb{F}_{q}$. The Petit algebras $\mathbb{F}_{q^n}[t;\sigma]/\mathbb{F}_{q^n}[t;\sigma](t^m-a_i)$ are proper nonassociative algebras.  Assume that $n\geq 3$ and   $n \geq m-1$.
Since
$$
\mathbb{F}_{q^n}[t;\sigma_1]/\mathbb{F}_{q^n}[t;\sigma_1](t^m-b)\not\cong \mathbb{F}_{q^n}[t;\sigma_2]/\mathbb{F}_{q^n}[t;\sigma_2](t^m-b)
$$
for any two distinct generators $\sigma_1$ and $ \sigma_2$ of  $\Gal(\mathbb{F}_{q^n}/\mathbb{F}_{q})$ and any choice of $a_i\in \mathbb{F}_{q^n}\smallsetminus \mathbb{F}_{q}$, the choice of generator $\sigma_i$ will also affect the $\sigma_i$-constacyclic codes we obtain, since these correspond to the left ideals of two non-isomorphic algebras.

First steps to classify skew $\sigma$-constacyclic codes by some natural $(m,\sigma)$-equivalence (respectively, some $(m,\sigma)$-isometry) that preserves their Hamming distance have recently been achieved in \cite{Ou2025}. The main idea is that  $\sigma$-constacyclic codes belonging to the same equivalence class have the same minimum distance and the same algebraic structures. The approach defines $(m,\sigma)$-equivalences/isometries using  isomorphisms between  Petit algebras  that preserve the Hamming distance, but all codes are being built with the same fixed $\sigma$ that generates the Galois group $\Gal(\mathbb{F}_{q^n}/\mathbb{F}_{q})$.

The following definition refines this approach, and aligns with the definition of  $(m,\sigma)$-equivalence/isometry given in \cite{Ou2025}: let $a_i\in \mathbb{F}_{q^n}^\times$ and let $\sigma$, $\sigma_j$ be generators of $\Gal(\mathbb{F}_{q^n}/\mathbb{F}_{q})$. 
The skew $(\sigma_i,a_i)$-constacyclic codes correspond to ideals in $\mathbb{F}_{q^n}[t;\sigma_i]/\mathbb{F}_{q^n}[t;\sigma_i](t^m-a_i)$ ($i=1,2$). Let $\mathcal{C}_{(\sigma_i,a_i)}$  be the class of skew $(\sigma_i,a_i)$-constacyclic codes related to $t^m-a_i\in \mathbb{F}_{q^n}[t;\sigma_i]$.
Now $a_1$ and $a_2$ 
are called $( m,\sigma)$-isometric,  
if there exist an  isomorphism between nonassociative Petit algebras
$$\varphi: \mathbb{F}_{q^n}[t;\sigma_1]/\mathbb{F}_{q^n}[t;\sigma_1](t^m-a_1)\longrightarrow  \mathbb{F}_{q^n}[t;\sigma_2]/\mathbb{F}_{q^n}[t;\sigma_2](t^m-a_2)$$
that preserves the Hamming distance, and called $( m,\sigma)$-equivalent,  
if there exist  $ \alpha \in \mathbb{F}_q^\times $, such that
$$\varphi_{\alpha}: \mathbb{F}_{q^n}[t;\sigma_1]/\mathbb{F}_{q^n}[t;\sigma_1](t^m-a_1)\longrightarrow  \mathbb{F}_{q^n}[t;\sigma_2]/\mathbb{F}_{q^n}[t;\sigma_2](t^m-a_2),\quad h(t) \longmapsto h(\alpha t)$$
is an isomorphism between nonassociative Petit algebras (it might be less confusing to view these in this context simply as nonassociative rings).
It is straighforward to see that if such an isomorphism $\varphi_{\alpha}$ exists then it will preserve the Hamming distance, as well as the algebraic structure by its nature, i.e. the ideals and their generators will be mapped $1-1$. Thus  $\mathcal{C}_{(\sigma_1,a_1)}$ and $\mathcal{C}_{(\sigma_2,a_2)}$ will be equivalent classes.

But  we know already that such isomorphisms do not exist when $n\geq m-1$ and $f=t^m-a$ is not right-invariant. We conclude:

\begin{theorem}\label{thm:last}
Let $n\geq 3$,  $n\geq m-1$, and let $\sigma_1$ and $ \sigma_2$ be any two distinct generators of  ${\rm Gal}(\mathbb{F}_{q^n}/\mathbb{F}_{q})$. Let  $a_i\in K\smallsetminus F$, then $a_1$ and $a_2$ are not $(m,\sigma)$-equivalent and not $(m,\sigma)$-isometric.
\end{theorem}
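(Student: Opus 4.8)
The plan is to read this statement off directly from the First Main Theorem \ref{thm:main1}. Suppose, for contradiction, that $a_1$ and $a_2$ are $(m,\sigma)$-isometric, or $(m,\sigma)$-equivalent. By the definitions recalled just above the statement, in either case there is an isomorphism of nonassociative Petit algebras
$$
\varphi: \mathbb{F}_{q^n}[t;\sigma_1]/\mathbb{F}_{q^n}[t;\sigma_1](t^m-a_1)\longrightarrow \mathbb{F}_{q^n}[t;\sigma_2]/\mathbb{F}_{q^n}[t;\sigma_2](t^m-a_2);
$$
in the isometry case $\varphi$ is given outright, and in the equivalence case $\varphi$ is the specific map $\varphi_\alpha: h(t)\mapsto h(\alpha t)$ for some $\alpha\in\mathbb{F}_q^\times$. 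First I would observe that such a $\varphi$ is automatically an $F$-algebra isomorphism: it is bijective and multiplicative by hypothesis, and it is $F$-linear either by construction or, in the equivalence case, because $\alpha\in\mathbb{F}_q^\times={\rm Fix}(\sigma_1)={\rm Fix}(\sigma_2)$ commutes with $t$ in both twisted polynomial rings, so $\varphi_\alpha$ sends $\sum b_i t^i$ to $\sum b_i\alpha^i t^i$ and in particular fixes $\mathbb{F}_q$ pointwise. Thus in every case we obtain an $F$-algebra isomorphism between the two quotient algebras.

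Next I would feed in the remaining hypotheses. Since $a_i\in K\smallsetminus F$, each $t^m-a_i$ is not right-invariant, so both quotient algebras are \emph{proper} nonassociative Petit algebras; and $n\geq 3$ ensures that $\Gal(\mathbb{F}_{q^n}/\mathbb{F}_q)$ genuinely has two distinct generators, so the hypothesis is not vacuous. With $n\geq m-1$ and $\sigma_1\neq\sigma_2$, Theorem \ref{thm:main1} applies verbatim and asserts that no $F$-algebra isomorphism between these two algebras exists — contradicting the existence of $\varphi$. Hence $a_1$ and $a_2$ are neither $(m,\sigma)$-isometric nor $(m,\sigma)$-equivalent, and consequently the code classes $\mathcal{C}_{(\sigma_1,a_1)}$ and $\mathcal{C}_{(\sigma_2,a_2)}$ are not equivalent.

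There is essentially no obstacle here: the entire content is carried by Theorem \ref{thm:main1}, and this final statement is just its translation into the language of skew $\sigma$-constacyclic codes. The only point warranting a line of care is checking that $(m,\sigma)$-equivalence really forces an honest $F$-algebra isomorphism rather than merely a Hamming-distance-preserving bijection, which is immediate from the explicit shape of $\varphi_\alpha$. Alternatively, as already noted in the discussion preceding the statement, $(m,\sigma)$-equivalence implies $(m,\sigma)$-isometry, so it suffices to rule out the latter.
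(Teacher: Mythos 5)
Your proof is correct and matches the paper's own argument, which simply observes that both notions require an isomorphism of the two Petit algebras and invokes the non-isomorphism $K[t;\sigma_1]/K[t;\sigma_1](t^m-a_1)\not\cong K[t;\sigma_2]/K[t;\sigma_2](t^m-a_2)$ from Theorem \ref{thm:main1}. Your extra check that $\varphi_\alpha$ is genuinely an $F$-algebra isomorphism is a sensible bit of added care but does not change the route.
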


\begin{proof}
 This follows immediately from
$
K[t;\sigma_1]/K[t;\sigma_1](t^m-a_1)\not\cong K[t;\sigma_2]/K[t;\sigma_2](t^m-a_2).
$
\end{proof}

 It is still an open problem if this implies that the class of the skew $(\sigma_1,a_1)$-constacyclic codes and the one of the skew $(\sigma_2,a_2)$-constacyclic codes are not equivalent, either.

\providecommand{\bysame}{\leavevmode\hbox to3em{\hrulefill}\thinspace}
\providecommand{\MR}{\relax\ifhmode\unskip\space\fi MR }
\providecommand{\MRhref}[2]{%
  \href{http://www.ams.org/mathscinet-getitem?mr=#1}{#2}
}
\providecommand{\href}[2]{#2}


\begin{thebibliography}{{LMF}24}

\bibitem[BP2025]{BP2025} T. Bag, D. Panario, Quasi-polycyclic and skew quasi-polycyclic codes over $\mathbb{F}_q$, Finite Fields and Their Applications \textbf{101}, 2025.


\bibitem[BUl2014]{BU14} D. Boucher, F. Ulmer, \emph{Linear codes using skew polynomials with automorphisms and derivations.} Des.
Codes Cryptogr.  \textbf{70} (3) (2014), 405--431.

\bibitem[BUl2009]{BU14.2} D. Boucher, F. Ulmer, \emph{Coding with skew polynomial rings.} J. Symbolic Comput.  \textbf{44} (12) (2009), 1644--1656.

\bibitem[BouUl2009]{BU09} D. Boucher, F. Ulmer, Codes as modules over skew polynomial rings. Cryptography and coding, Lecture
Notes in Comput. Sci., 5921, Springer, Berlin, 2009, 38--55.

\bibitem[BouLe2013]{BL13} M. Boulagouaz, A. Leroy, \emph{$(\sigma,\delta)$-codes.} Adv. Math. Comm. \textbf{7} (4) (2013), 463--474

\bibitem[BrPumSt2018]{BPS}  C.~Brown, S.~Pumpl\"un and A.~Steele, \emph{Automorphisms and isomorphisms of Jha-Johnosn semifields obtained from skew polynomial rings}. Comm. Alg.  \textbf{46} (2018), no.~10, 4561--4576.


\bibitem[BrPum2018]{BrownPumpluen2018}
C.~Brown and S.~Pumpl\"{u}n, \emph{The automorphisms of Petit's algebras},
  Comm. Algebra \textbf{46} (2018), no.~2, 834--849.

   \bibitem[Br2018]{CB} C. Brown, \emph{Petit algebras and their automorphisms}. PhD Thesis, University of Nottingham, 2018.
 \verb#https://arxiv.org/abs/1806.00822#

\bibitem[GoTor2014]{G} J. G\`{o}mez-Torrecillas, \emph{Basic module theory over non-commutative rings with computational aspects
of operator algebras. With an appendix by V. Levandovskyy.} Lecture Notes in Comput. Sci. 8372,
     Algebraic and algorithmic aspects of differential and integral operators,  Springer, Heidelberg (2014) 23--82.


\bibitem[Jac1996]{J96} N.~Jacobson,
``Finite-dimensional division algebras over fields.'' Springer Verlag,
Berlin-Heidelberg-New York, 1996.

  \bibitem[JJ989]{JJ} V. Jha and N. L. Johnson, \emph{An analog of the Albert-Knuth theorem on the orders of finite semifields,
 and a complete solution to Cofman's subplane problem.} Algebras Groups Geom. \textbf{6} (1) (1989),  1--35.


\bibitem[KaLieb2008]{KL} W. M. Kantor, R. A. Liebler, \emph{Semifields arising from irreducible semilinear transformations.} J. Aust. Math. Soc. \textbf{85} (2008), no. 3, 333–-339.

    \bibitem[KMRT]{KMRT} Knus, M.A., Merkurjev, A., Rost, M., Tignol, J.-P.,
``The Book of Involutions'', AMS Coll. Publications, vol. 44 (1998).


\bibitem[LaShe2013]{LS} M.~Lavrauw, J.~Sheekey, \emph{Semifields from skew-polynomial rings}. Adv. Geom. \textbf{13} (4) (2013), 583--604.

\bibitem[NevPu2024]{NP} M. Nevins, S. Pumpl\"un, \emph{A parametrization of nonassociative cyclic algebras of prime degree}.
 J. Algebra \textbf{664} (A) 2025, 631-654 \\
\verb#https://doi.org/10.1016/j.jalgebra.2024.10.021#

\bibitem[Ou2025]{Ou2025} H. Ouazzou, M. Najmeddine, N. Aydin, \emph{On isometry and equivalence of skew constacyclic  codes over $\mathbb{F}_q$}.  Discrete Mathematics \textbf{348} (1), January 2025, 114279.

\bibitem[Ore1933]{O} O. Ore, \emph{Theory of noncommutative polynomials.} Annals of Math. \textbf{34} (1933), no. 3, 480-508.


  \bibitem[Pe1966]{P66} J.-C.~Petit, \emph{Sur certains quasi-corps g\'{e}n\'{e}ralisant un type d'anneau-quotient}.
 S\'{e}minaire Dubriel. Alg\`{e}bre et Th\'{e}orie des Nombres \textbf{20} (1966--67), 1--18.

\bibitem[Pet1968]{P68} J.-C.~Petit, \emph{Sur les quasi-corps distributifes \`{a} base monog\`{e}ne}.
C. R. Acad. Sc. Paris  \textbf{266} (1968), S\'{e}rie A, 402--404.


\bibitem[Rim1978]{Ri} M. Rimmer, \emph{Isomorphisms between skew polynomial rings.} J. Austral. Math. Soc. \textbf{25} (Series A) (1978), 314--321.


\bibitem[San1962]{San} R. Sandler, \emph{Autotopism groups of some finite nonassociative algebras.} American Journal of Mathematics \textbf{84} (1962), no. 2, 239--264


\bibitem[Sch1995]{Sch} R. D. Schafer, ``An Introduction to Nonassociative Algebras.'' Dover Publ., Inc., New York, 1995.

\bibitem[She2019]{Sheekey}  J.~Sheekey, \emph{New semifields and new MRD codes from skew polynomial rings}, September 2019 Journal of the LMS
DOI: 10.1112/jlms.12281

\bibitem[Ste2014]{St14} A.~Steele, \emph{Some new classes of division algebras and potential applications to space-time block coding}. PhD Thesis, University of Nottingham, 2014.
\verb#https://eprints.nottingham.ac.uk/13934/#

\bibitem[Ste2014]{S12} A.~Steele, \emph{ Nonassociative cyclic algebras.}
 Israel J. Math. \textbf{200} (2014), no. 1, 361--387.


\end{thebibliography}
\end{document}